\theoremstyle{definition}
\newtheorem{prop}{Proposition}
\newtheorem{theorem}{Theorem}
\newtheorem{corollary}{Corollary}
\newtheorem{asmp}{Assumption}
\newtheorem{lemma}{Lemma}
\theoremstyle{plain}
\newcommand{\En}{\mathbb E}
\newcommand{\mrm}[1]{\mathrm{#1}}
\newcommand{\argmin}[1]{\underset{#1}{\mrm{argmin}} \ }
\newcommand{\algo}{\widehat{f}}
\newcommand{\cH}{\mathcal{H}}
\newcommand{\norm}[1]{\left\|#1\right\|}
\newcommand{\tr}{\ensuremath{{\scriptscriptstyle\mathsf{\,T}}}}
\newcommand{\reals}{{\mathbb R}}
\definecolor{BrickRed}{rgb}{0.6,0,0}
\title{On the Multiple Descent of Minimum-Norm Interpolants and Restricted Lower Isometry of Kernels}
\author{Tengyuan Liang\\ University of Chicago \and Alexander Rakhlin\\ MIT \and Xiyu Zhai\\ MIT}
\date\today							
\begin{document}

\maketitle
\begin{abstract}
	We study the risk of minimum-norm interpolants of data in Reproducing Kernel Hilbert Spaces. Our upper bounds on the risk are of a \emph{multiple-descent} shape for the various scalings of $d = n^{\alpha}$, $\alpha\in(0,1)$, for the input dimension $d$ and sample size $n$. Empirical evidence supports our finding that minimum-norm interpolants in RKHS can exhibit this unusual non-monotonicity in sample size; furthermore,  locations of the peaks in our experiments match our theoretical predictions. Since gradient flow on appropriately initialized wide neural networks converges to a  minimum-norm interpolant with respect to a certain kernel, our analysis also yields novel estimation and generalization guarantees for these over-parametrized models.
	
	At the heart of our analysis is a study of spectral properties of the random kernel matrix restricted to a filtration of eigen-spaces of the population covariance operator, and may be of independent interest. 
\end{abstract}	


\section{Introduction}

We investigate the generalization and consistency of minimum-norm interpolants
\begin{align}
	\label{def:rkhs_interpolation}
	\algo~\in~ &\argmin{f\in\cH} ~~\norm{f}_{\cH} ~~~ \text{s.t.}~~ f(x_i)=y_i,~~ i=1,\ldots,n 
\end{align}
of the data $(x_1,y_1),\ldots,(x_n,y_n) \in\reals^d\times \reals$ with respect to a norm in a Reproducing Kernel Hilbert Space $\cH$. The interpolant, also termed  ``Kernel Ridgeless Regression,'' can be viewed as a limiting solution of
\begin{align}
	\label{def:rkhs}
	\argmin{f\in\cH} \frac{1}{n}\sum_{i=1}^n (f(x_i)-y_i)^2 + \lambda\norm{f}^2_{\cH}
\end{align}
as $\lambda\to 0$. Classical statistical analyses of Kernel Ridge Regression (see e.g. \cite{caponnetto2007optimal} and references therein) rely on a carefully chosen regularization parameter to control the bias-variance tradeoff, and the question of consistency of the non-regularized solution falls outside the scope of these classical results.

Recent literature has focused on understanding risk of estimators that interpolate data, including work on nonparametric local rules \citep{belkin2018overfitting,belkin2018does}, high-dimensional linear regression \citep{bartlett2019benign,hastie2019surprises}, random features model \citep{ghorbani2019linearized}, classification with rare instances \citep{feldman2019does}, and kernel (ridgeless) regression \citep{belkin2018understand,liang2018just,rakhlin2018consistency}. 

This paper continues the line of work on kernel regression. More precisely, \cite{rakhlin2018consistency} showed that the minimum-norm interpolant with respect to Laplace kernel is \emph{not} consistent (that is, risk does not go to zero with $n\to\infty$) if dimensionality $d$ of the data is constant with respect to $n$, even if the bandwidth of the kernel is chosen adaptively. On the other hand, \cite{liang2018just} investigated the regime $n\asymp d$ and showed that risk can be upper bounded by a quantity that can be small under favorable spectral properties of the data and the kernel matrix.

The present paper aims to paint a more comprehensive picture, studying the performance of the minimum-norm interpolants in a general scaling regime $d \asymp n^{\alpha}$, $\alpha\in(0,1)$. Figure~\ref{fig:multipledescent} summarizes the non-monotone behavior of our upper bound on the risk of the minimum-norm interpolant, as reported in Theorem~\ref{thm:informal} below. 

\begin{figure}[htbp]
\centering
\includegraphics[width=0.6\linewidth]{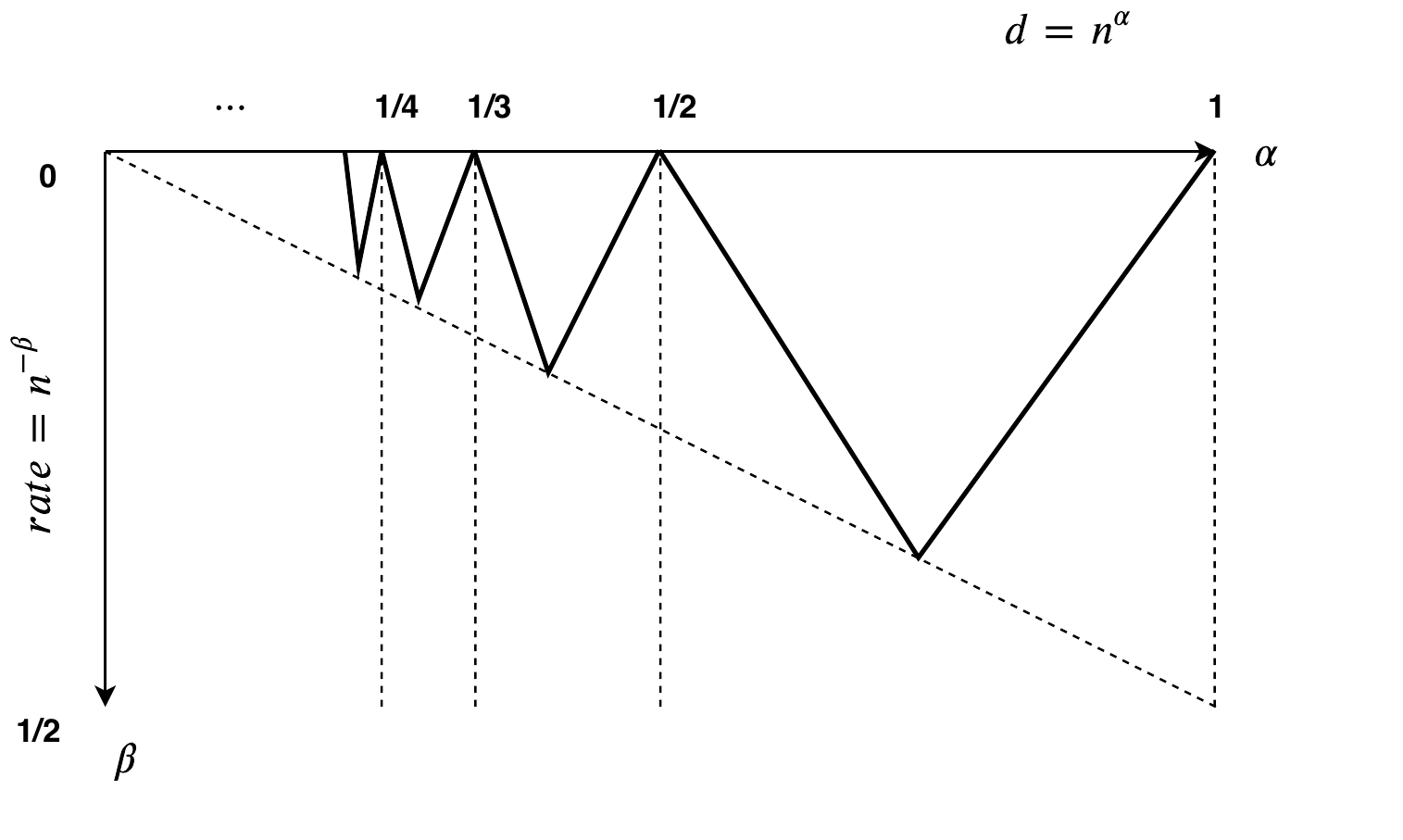}
\caption{\textbf{Multiple-descent behavior} of the rates as the scaling $d = n^{\alpha}$ changes.}
 \label{fig:multipledescent}
\end{figure}

We make two observations. First, for any integer $\iota \geq 1$, for $\alpha \in [\frac{1}{\iota+1}, \frac{1}{\iota})$, there exists a ``valley'' on the curve at each $d = n^{\frac{1}{\iota+1/2}}$ where the rate is fast (of the order $n^{-\beta}$ with $\beta = \frac{1}{2\iota+1}$). Second, towards the lower-dimensional regime ($\alpha$ moving towards $0$), the fastest possible rate even at the bottom of the valley is getting worse, with no consistency in the $\alpha=0$ regime, matching the lower bound of \citep{rakhlin2018consistency}. 

Depending on the point of view, we can also interpret the upper bound of Theorem~\ref{thm:informal} by fixing $d$ and analyzing the behavior in $n$. In this case, the interpretation is rather counterintuitive: more data can lead to alternating regimes of better and worse performance. Conceptually, this occurs because with more samples, the empirical kernel matrix could estimate more complex subspaces associated with smaller population eigenvalues, thus increasing the variance of the interpolants.

Our experiments, reported in  Figure~\ref{fig:graphics_experiment_n=5000}, confirm the surprising multiple-peaks behavior, suggesting that the non-monotone shape of our upper bound is not just an artifact of the proof technique. Moreover, the locations of the peaks line up with our theoretical predictions. Our finding complements the \emph{double-descent} behavior investigated previously in the literature \citep{belkin2018reconciling,mei2019generalization},\footnote{In Figure~\ref{fig:graphics_experiment_n=5000}, we only plotted the variance of the minimum-norm interpolant since the shape will dominate the bias term for appropriately scaled conditional variance of the $Y$ variable.} suggesting that the behavior in the kernel case is significantly more detailed.

The challenging problem of proving a \emph{lower bound} that exhibits the multiple descent behavior remains open. A more detailed analysis that studies relative heights of the peaks also appears to be an interesting direction of investigation. While the peaks and their size are certainly interesting, the reader should also note the positive message of our main result: the interpolating solution provably has a diminishing (in sample size) out-of-sample error for most of the scalings of $d$ and $n$.


\vspace{-0.1cm}
\begin{figure}[htbp]
  \centering
    \includegraphics[width=.5\textwidth]{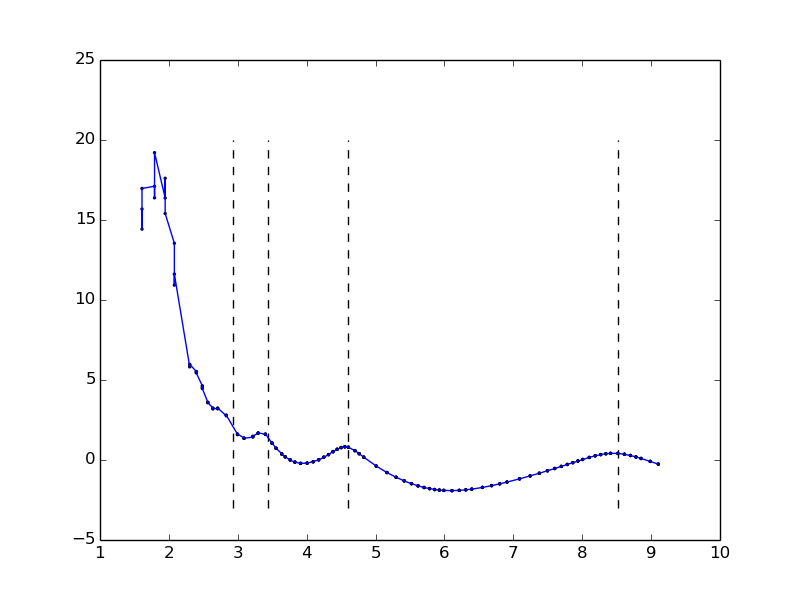}
  \caption{Empirical evidence for multiple descent. Sample size $n=5000$, x-axis: $\log d$, y-axis: variance of the minimum-norm interpolant. Vertical lines denote theoretically predicted peaks.}
  \label{fig:graphics_experiment_n=5000}
\end{figure}

The main result of the paper can be informally stated as follows.
\begin{theorem}[Informal]
	\label{thm:informal}
	For any integer $\iota \geq 1$, consider $d = n^{\alpha}$ where $\alpha \in [\frac{1}{\iota+1}, \frac{1}{\iota}).$ Consider a general function $h \in C^{\infty}(\mathbb{R})$ and define the inner product kernel $k(x, z) = h(x^\top z/d)$. Consider $n$ $i.i.d.$ data pairs $(x_i, y_i)$ drawn from $\mathcal{P}_{X, Y}$, and denote the target function $f_*(x)=\mathbb{E}[Y|X=x]$. Suppose the conditions on $f_*, h$ and $\mathcal{P}_{X, Y}$ specified by Theorems~\ref{thm:variance}-\ref{thm:bias} are satisfied.
	With probability at least $1 - \delta - e^{-n/d^{\iota}}$ on the design $\bm{X}  \in \mathbb{R}^{n\times d}$,
	\begin{align*}
		\mathbb{E}\left[ \| \widehat f - f_* \|^2_{\mathcal{P}_X} | \bm{X} \right] \leq C \cdot \left( \frac{d^{\iota}}{n}+\frac{n}{d^{\iota+1}}  \right) \asymp n^{-\beta}, \\
		\quad \beta:= \min\left\{(\iota+1)\alpha-1, 1-\iota\alpha \right\} \geq 0.
	\end{align*}
	Here the constant $C(\delta, h, \iota, \mathcal{P})$ does not depend on $d, n$, with $\mathcal{P}$ denoting the distribution of each coordinate of $X$.
\end{theorem}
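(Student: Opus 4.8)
The plan is to reduce the statement to two spectral estimates for the random kernel matrix via the exact bias--variance decomposition of the interpolant. Write $\bm K \in \reals^{n\times n}$ for the empirical kernel matrix, $\bm K_{ij} = h(x_i^\top x_j/d)$, set $\bm k(x) = \big(h(x^\top x_i/d)\big)_{i=1}^n$, and split the labels as $\bm y = f_*(\bm X) + \bm\varepsilon$ with $\varepsilon_i = y_i - f_*(x_i)$ and $\En[\varepsilon_i \mid x_i] = 0$. On the (high-probability) event that $\bm K$ is invertible --- guaranteed by the estimates below --- we have $\widehat f(x) = \bm k(x)^\top \bm K^{-1}\bm y$, so, since the residuals have conditional mean zero,
\begin{align*}
	\En\big[\,\|\widehat f - f_*\|^2_{\mathcal P_X} \,\big|\, \bm X\,\big]
	\;=\; \underbrace{\big\|\,\bm k(\cdot)^\top \bm K^{-1} f_*(\bm X) - f_*\,\big\|^2_{\mathcal P_X}}_{=:\ \mathrm{Bias}(\bm X)}
	\;+\; \underbrace{\En\big[\,\big\|\,\bm k(\cdot)^\top \bm K^{-1}\bm\varepsilon\,\big\|^2_{\mathcal P_X} \,\big|\, \bm X\,\big]}_{=:\ \mathrm{Var}(\bm X)} .
\end{align*}
Writing $\bm\Phi_{ij} = \int h(x^\top x_i/d)\,h(x^\top x_j/d)\,d\mathcal P_X(x)$ for the population Gram matrix, one has $\mathrm{Var}(\bm X) = \mathrm{tr}\big(\bm K^{-1}\,\mathrm{diag}(\En[\varepsilon_i^2\mid x_i])\,\bm K^{-1}\bm\Phi\big) \lesssim \mathrm{tr}(\bm K^{-1}\bm\Phi\,\bm K^{-1})$ under a bounded-conditional-variance assumption on $\mathcal P_{X,Y}$. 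It thus suffices to show, on an event of probability $1-\delta-e^{-n/d^\iota}$, that $\mathrm{tr}(\bm K^{-1}\bm\Phi\bm K^{-1}) \le C(d^\iota/n + n/d^{\iota+1})$ (Theorem~\ref{thm:variance}) and $\mathrm{Bias}(\bm X) \le C(d^\iota/n + n/d^{\iota+1})$ (Theorem~\ref{thm:bias}); the asserted bound and the identity $\beta = \min\{(\iota+1)\alpha-1,\,1-\iota\alpha\}$ for $d = n^\alpha$ then follow.

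For the variance estimate, the plan is to use the Mercer decomposition of the inner-product kernel adapted to the product law $\mathcal P$ of the coordinates: the population operator $T_h$ has eigenspaces filtered by polynomial degree $j = 0,1,2,\dots$, with eigenvalue $\mu_j \asymp d^{-j}$ and multiplicity $m_j \asymp d^j$. Split $h = h_{\le\iota} + h_{>\iota}$ into the part carrying degrees $\le \iota$ and the remainder, so $\bm K = \bm K_{\le\iota} + \bm K_{>\iota}$ with $\bm K_{\le\iota} = \bm\Psi\bm\Lambda\bm\Psi^\top$, where $\bm\Psi \in \reals^{n\times m}$ ($m = \sum_{j\le\iota}m_j \asymp d^\iota$) collects the evaluations at the design of the degree-$\le\iota$ $L^2(\mathcal P_X)$-orthonormal polynomials and $\bm\Lambda = \mathrm{diag}(\mu_\ell)$. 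Two facts drive the bound. First, in the regime $n \lesssim d^{\iota+1}$ the high-degree block concentrates on the data: $\bm K_{>\iota} = \tau\bm I_n + \bm E$ with $\tau = \tau(h,\iota) > 0$ a constant and $\|\bm E\|_{\mathrm{op}} = o(1)$ with probability $1 - e^{-cn/d^\iota}$, via a moment/matrix-concentration argument on the Gegenbauer expansion of $h(x_i^\top x_j/d)$, whose off-diagonal entries are small because $x_i^\top x_j/d$ concentrates at scale $d^{-1/2}$. Second --- the \emph{restricted lower isometry of the kernel} advertised in the abstract --- one shows $\bm\Psi^\top\bm\Psi \succeq c\,n\,\bm I_m$ with probability $1-\delta-e^{-cn/d^\iota}$, i.e.\ the empirical second-moment matrix of the degree-$\le\iota$ feature map is well conditioned once $n \gtrsim d^\iota$. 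Combining, $\bm K \succeq \bm\Psi\bm\Lambda\bm\Psi^\top + (\tau - o(1))\bm I_n$; since $\bm\Phi = \bm\Psi\bm\Lambda^2\bm\Psi^\top + \bm\Phi_{>\iota}$ with $\mathrm{tr}(\bm\Phi_{>\iota}) \asymp n\sum_{j>\iota}\mu_j^2 m_j \asymp n/d^{\iota+1}$, diagonalizing in the (approximate) eigenbasis of $\bm K$ and using $\bm\Psi^\top\bm\Psi \asymp n\bm I_m$ together with $n\mu_j \asymp n^{1-j\alpha}\to\infty$ for $j\le\iota$ gives $\mathrm{tr}(\bm K^{-1}\bm\Phi\bm K^{-1}) \lesssim \sum_{\ell\le m}\tfrac{n\mu_\ell^2}{(n\mu_\ell+\tau)^2} + \tau^{-2}\mathrm{tr}(\bm\Phi_{>\iota}) \lesssim \tfrac{m}{n} + \tfrac{n}{d^{\iota+1}} \asymp \tfrac{d^\iota}{n} + \tfrac{n}{d^{\iota+1}}$.

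The bias follows a parallel route: decompose $f_* = f_*^{\le\iota} + f_*^{>\iota}$ along the same filtration. The interpolant reproduces $f_*(\bm X)$ exactly while, under the source condition of Theorem~\ref{thm:bias}, having RKHS norm no larger than $\|f_*\|_{\mathcal H}$; converting this into an $L^2(\mathcal P_X)$ bound via the restricted lower isometry of $\bm\Psi$ and the fact that $n\mu_j \asymp n^{1-j\alpha}\to\infty$ for $j\le\iota$ (using $\alpha<1/\iota$) shows the degree-$\le\iota$ component is recovered with squared error $O(d^\iota/n)$, while the unresolvable degree-$>\iota$ component contributes the irreducible approximation error $O(n/d^{\iota+1})$ allowed by the hypotheses on $f_*$. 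A union bound over the finitely many events above yields the theorem.

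I expect the main obstacle to be establishing the restricted lower isometry $\bm\Psi^\top\bm\Psi \succeq c\,n\,\bm I_m$ near the critical scalings $n \asymp d^\iota$ and $n \asymp d^{\iota+1}$: the degree-$\le\iota$ polynomial features are heavy-tailed and strongly correlated, the number of features $m \asymp d^\iota$ grows polynomially with $n$, and one must simultaneously prevent the unresolvable high-degree block $\bm K_{>\iota}$ from aligning with a near-null direction of $\bm K_{\le\iota}$ --- this coupling is precisely what forces the failure probability to deteriorate to $e^{-n/d^\iota}$ as $\alpha \uparrow 1/\iota$. A secondary delicate point is making the high-degree concentration quantitative enough that $\|\bm E\|_{\mathrm{op}}$, together with the constant ridge $\tau$, dominates the smallest surviving low-degree eigenvalue $n\mu_\iota \asymp n^{1-\iota\alpha}$ so that the spectral bounds on $\bm K^{-1}$ go through uniformly.
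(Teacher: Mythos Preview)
Your high-level architecture matches the paper's: bias--variance split, restricted lower isometry for the degree-$\le\iota$ polynomial features (this is exactly Proposition~\ref{prop:key}), and a constant-order lower bound on the full kernel matrix coming from high-degree terms. Where you diverge is in how those pieces are assembled, and in one place you assume something stronger than what the paper proves.

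For the variance you posit the \emph{two-sided} estimate $\bm K_{>\iota}=\tau\bm I_n+\bm E$ with $\|\bm E\|_{\mathrm{op}}=o(1)$ and then diagonalise $\bm K$ approximately to evaluate $\mathrm{tr}(\bm K^{-1}\bm\Phi\bm K^{-1})$. The paper never needs the two-sided claim: it uses only the one-sided bound $n\mathbf K\succeq c\,\bm I_n$, obtained not from the full tail $\bm K_{>\iota}$ but from a \emph{single} sufficiently high degree $\iota'\ge 2\iota+3$, where the off-diagonal of $n\mathbf K^{[\iota']}$ has $\ell_1\to\ell_1$ norm $O\big(n(\sqrt{\log n}/\sqrt d)^{\iota'}\big)=o(1)$ and diagonal $\Omega(1)$. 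It then splits the \emph{vector} $k(\bm X,x)$ (rather than the matrix $\bm\Phi$) by degree, and for each degree $i\le\iota$ uses a projection argument onto the range of $\mathbf K^{[\le i]}$ together with Proposition~\ref{prop:key} to bound $\|\mathbf K^{-1}(\bm Xx)^{\circ i}\|$; the high-degree piece of $k(\bm X,x)$ is handled with the crude bound $\|(n\mathbf K)^{-1}\|_{\mathrm{op}}\le c^{-1}$. Your stronger two-sided claim is plausible (and is known on the sphere), but under the paper's product-measure, polynomial-tail assumptions it is substantially harder to justify; the paper's route is more elementary and is what actually drives the assumption ``there exists $\iota'\ge 2\iota+3$ with $\alpha_{\iota'}>0$'' in Theorem~\ref{thm:variance}.

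For the bias the paper takes a quite different and shorter path than your filtration decomposition of $f_*$. It introduces the surrogate $\tilde f_n(x)=\frac{1}{n}\sum_i k(x,x_i)\rho_*(x_i)$, uses the algebraic identity $k(x,\bm X)\,k(\bm X,\bm X)^{-1}\tilde f_n(\bm X)=\tilde f_n(x)$, and reduces
\[
\mathrm{Bias}(\bm X)\ \lesssim\ \|f_*(\bm X)-\tilde f_n(\bm X)\|^2\cdot \mathbb E_x\|k(\bm X,\bm X)^{-1}k(\bm X,x)\|^2\ +\ \|\tilde f_n-f_*\|_{\mathcal P_X}^2,
\]
i.e.\ a random $O(1)$ factor times the variance expression plus an $O(1/n)$ term (Propositions~\ref{prop:leave-one-out}--\ref{prop:in-prob}). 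This sidesteps any degree-by-degree recovery analysis and does not require your assertion that the unresolved high-degree part of $f_*$ contributes exactly $O(n/d^{\iota+1})$, which is not obvious under the source condition $f_*=\int k(\cdot,z)\rho_*(z)\,d\mathcal P_X(z)$.
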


It is easy to see that the minimum-norm interpolant in \eqref{def:rkhs_interpolation} has the closed-form solution 
$$\algo(x) = k(x,\bm{X})^\tr K^{-1}\bm{Y}$$
if the kernel matrix $K\in \reals^{n\times n}$ is invertible. Here $k(x,\bm{X})=[k(x,x_1),\ldots,k(x,x_n)]^\tr$, $K_{i,j}=k(x_i,x_j)$, and $\bm{Y}=[y_1,\ldots,y_n]^\tr$. As discussed below, the variance of the estimator can be upper bounded by 
$$\sigma^2_Y\cdot \En_{x,\bm{X}}\norm{k(x,\bm{X})^\tr K^{-1}}^2$$
where $\sigma^2_Y$ is a uniform upper bound on the conditional variance of $Y$ given $X$. However, in general, the smallest eigenvalue of the sample kernel matrix scales as a constant. Hence, further estimates on the variance term require a careful spectral analysis of the sample-based $K$ and the population-based $k(x,\bm{X})$. Note that the eigenvalues of the empirical kernel matrix have one-to-one correspondance to that of the empirical covariance operator. We prove that on a filtration of eigen-spaces of the covariance operator defined by the population distribution, the empirical covariance operator satisfies a certain \emph{restricted lower isometry} property.  This spectral analysis is the main technical part.

\section{Restricted Lower Isometry of High-Dimensional Kernels}

In this section, we highlight Proposition \ref{prop:key}, which establishes the \textit{Restricted Lower Isometry Property}. This property proves crucial in bounding the generalization error for the kernel ridgeless regression and, as a consequence, for randomly-initialized wide neural networks trained to convergence. The detailed proof of Proposition \ref{prop:key} is deferred to Section~\ref{sec:main_proof}.

\subsection{Setup}
Before stating the main proposition, let us introduce the formal setup and assumptions for the rest of the paper.
Random vectors $x_1,\cdots,x_n \in \mathbb{R}^d$ are drawn i.i.d. from a product distribution $\mathcal{P}_X = \mathcal{P}^{\otimes d}$, where the distribution for each coordinate $\mathcal{P}$ is independent and satisfies the following property.
\begin{asmp}[Distribution for each coordinate]
	\label{asmp:dist-coordinate}
	Assume that: (1) $\mathbb{E}_{z \sim \mathcal{P}}[z] = 0$ and for a constant $\nu > 1$, $\mathbb{P}(|z|\ge t)\le C(1+t)^{- \nu}$ holds for all $t\geq 0$.
	(2) For any set $S$ of finitely many real numbers, $\mathbb{P}(z\in S)<1$.
\end{asmp}
In addition, we require that $\forall x \in \mathcal{X} \subseteq \mathbb{R}^d$, the conditional variance is bounded by a constant: ${\rm Var}[Y|X = x] \leq \sigma^2_Y$.

Consider a function $h \in \mathcal{C}^{\infty}(\mathbb{R})$ whose Taylor expansion converges for all $t \in \mathbb{R}$
\begin{equation}
	\label{eq:h}
	h(t)=\sum_{i=0}^\infty \alpha_i t^i
\end{equation}
with all coefficients $\alpha_i\ge 0$. We define a kernel function $k(\cdot, \cdot):\mathcal{X} \times \mathcal{X} \rightarrow \mathbb{R}$ induced by $h$
\begin{align}
	\label{eq:kernel-interproduct}
	k(x, z) := h\left( \frac{x^\top z}{d} \right) .
\end{align}
Similarly, we define the \emph{normalized} finite dimensional kernel matrix $\mathbf{K} \in \mathbb{R}^{n\times n}$,
\begin{equation*}
	\mathbf{K}_{ij}:=k(x_i, x_j)/n, ~~1\leq i,j\leq n
\end{equation*}
In other words, $\mathbf{K} = k(\bm{X} , \bm{X} )/n$ with the $1/n$ normalization.

Denote the truncated polynomial and the corresponding truncated kernel matrix $\mathbf{K}^{[\iota]}$ (used only in the proof) as
\begin{equation*}
	h^{[\le\iota]}(t)=\sum_{i=0}^\iota \alpha_i t^i, ~~~ \mathbf{K}^{[\le \iota]}_{ij}:=h^{[\le\iota]}(x_i^\top x_j/d)/n.
\end{equation*}
Similarly, we denote the degree-$\iota$ component and the corresponding kernel as
\begin{equation*}
	h^{[\iota]}(t)=\alpha_\iota t^\iota\enspace, ~~~ \mathbf{K}^{[\iota]}_{ij}:=h^{[\iota]}(x_i^\top x_j/d)/n\enspace.
\end{equation*}

We are interested in the following high dimensional regime:
there is a fixed positive integer $\iota$ such that
\begin{equation}
	\iota<\frac{\log n}{\log d(n)}<\iota+1.
\end{equation}
Our investigation focuses on the regime when the dimension $d(n)$ grows with the sample size $n$, with $n$ being sufficiently large.

Finally, we use the norm $\norm{g}_{L_2(\mathcal{P}_X)}^2 = \norm{g}_{\mathcal{P}_X}^2= \int g^2 \mathcal{P}_X(dx)$ to measure the quality of the estimator. Thanks to the identity $\En\norm{\algo-f_*}^2_{\mathcal{P}_X} = \En (\algo(X)-Y)^2 - \min_{f\in\cH} \En(f(X)-Y)^2$, our results on estimation directly translate into prediction error guarantees.

\subsection{Main Technical Result}

%

Now we are ready to state the main technical contribution. We establish the restricted lower isometry property of the empirical kernel matrix on a filtration of spaces indexed by the polynomial basis with increasing degree.
\begin{prop}[Restricted Lower Isometry of High Dimensional Kernel]
	\label{prop:key}
	Let $\iota_0$ be a positive integer.
	Assume that the first $\iota_0+1$ Taylor coefficients $\alpha_0,\cdots, \alpha_{\iota_0}$ of the function $h$ (defined in \eqref{eq:h}) are positive. Assume that $d^{\iota_0}\log d=o(n)$. Let Assumption~\ref{asmp:dist-coordinate} on $\mathcal{P}$ be satisfied with $\nu > \iota_0$.

	Then there are positive constants $C,C'$ depending only on $\iota_0$, $\mathcal{P}$, and $\{\alpha_i\}_{i\leq \iota_0}$ such that for $n$ large enough, with probability at least $1-e^{-\Omega(n/d^{\iota_0})}$ the following holds:
	\begin{itemize}
		\item for any $\iota\le \iota_0$, $\mathbf{K}^{[\le \iota]}$ has $\binom{\iota+d}{\iota}$ nonzero eigenvalues, all of them larger than $C'd^{-\iota}$, and 
		\item the range of $\mathbf{K}^{[\le \iota]}$ is
	\begin{align*}
		{\rm span}\Big\{(p(x_1),\cdots,p(x_n)):p \text{ is a multivariable polynomial of degree not larger than $\iota$}\Big\}.
	\end{align*}
	\end{itemize}
\end{prop}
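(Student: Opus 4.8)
The plan is to start from the \emph{feature-map representation} of the truncated kernel. Expanding $h^{[\le\iota]}(x_i^\top x_j/d)=\sum_{k=0}^{\iota}\alpha_k d^{-k}(x_i^\top x_j)^k$ by the multinomial theorem gives
\[
\mathbf K^{[\le\iota]}=\tfrac1n\,\Psi_\iota\Psi_\iota^\top,\qquad \psi(x)_{\mathbf m}:=\sqrt{\alpha_{|\mathbf m|}\tbinom{|\mathbf m|}{\mathbf m}}\;d^{-|\mathbf m|/2}\,x^{\mathbf m},\qquad (\Psi_\iota)_{i,\cdot}=\psi(x_i)^\top,
\]
where $\mathbf m$ ranges over the $N=\binom{\iota+d}{\iota}$ multi-indices with $|\mathbf m|\le\iota$ and $x^{\mathbf m}=\prod_a(x^{(a)})^{m_a}$. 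Since all prefactors are strictly positive (this is where the hypothesis $\alpha_0,\dots,\alpha_{\iota_0}>0$ is used), $\Psi_\iota=\bm M D^{1/2}$ with $\bm M$ the raw monomial matrix and $D^{1/2}$ an invertible diagonal matrix, so the column span of $\Psi_\iota$ is the column span of $\bm M$, i.e.\ $\mathrm{span}\{(p(x_1),\dots,p(x_n)):\deg p\le\iota\}$; and $\mathrm{range}(\mathbf K^{[\le\iota]})=\mathrm{range}(\Psi_\iota)$ while the number of nonzero eigenvalues equals $\mathrm{rank}(\Psi_\iota)=\mathrm{rank}(\bm M)$. Hence \emph{both} structural claims — the range description and the count $\binom{\iota+d}{\iota}$ — follow the instant we know $\Psi_\iota$ has full column rank $N$, and the sharper statement $\lambda_{\min}\!\big(\tfrac1n\Psi_\iota^\top\Psi_\iota\big)\ge C'd^{-\iota}$ is exactly the eigenvalue bound (recalling that the nonzero eigenvalues of $\tfrac1n\Psi_\iota\Psi_\iota^\top$ coincide with those of $\tfrac1n\Psi_\iota^\top\Psi_\iota=\tfrac1n\sum_i\psi(x_i)\psi(x_i)^\top$). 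So the whole proposition reduces to this one operator-norm lower bound, uniformly over $\iota\le\iota_0$, with the stated probability.

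I would then split that bound into a \textbf{population estimate} and a \textbf{concentration estimate}. For the population estimate, show $\lambda_{\min}\!\big(\mathbb E[\psi(X)\psi(X)^\top]\big)\ge c_1 d^{-\iota}$. Writing $\mathbb E[\psi\psi^\top]=D^{1/2}\Sigma D^{1/2}$ with $\Sigma$ the monomial Gram matrix and $D_{\mathbf m}\asymp d^{-|\mathbf m|}$, change to the product orthonormal-polynomial basis of $\mathcal P^{\otimes d}$ (these univariate orthogonal polynomials up to degree $\iota_0$ exist and are nondegenerate precisely because of Assumption~\ref{asmp:dist-coordinate}(2)); then $\mathbb E[\psi\psi^\top]=D^{1/2}RR^\top D^{1/2}$ with $R$ triangular in the degree grading and built multiplicatively over coordinates, so $\lambda_{\min}=\sigma_{\min}(D^{1/2}R)^2$. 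The bottleneck directions are the pure single-coordinate degree-$\iota$ monomials, whose normalization is exactly $d^{-\iota/2}$, and one must check — genuinely using the product structure, since a determinant/Frobenius bound on a matrix of size $N\asymp d^{\iota}$ is hopelessly lossy — that the near-cancellations between a degree-$k$ block and averages of lower-degree blocks cost only an extra $d^{-(\iota-k)/2}$, so that no direction drops below order $d^{-\iota}$. For the concentration estimate, show $\big\|\tfrac1n\Psi_\iota^\top\Psi_\iota-\mathbb E[\psi\psi^\top]\big\|_{\mathrm{op}}\le\tfrac12 c_1 d^{-\iota}$ with probability $\ge 1-e^{-\Omega(n/d^{\iota_0})}$; with Weyl's inequality this closes everything simultaneously (full rank, range, eigenvalue bound). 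Here one first truncates each coordinate at a level $R$ chosen so that $\{\max_{i,a}|x_i^{(a)}|\le R\}$ fails with probability $\le e^{-\Omega(n/d^{\iota_0})}$ — this is exactly what the polynomial tail with $\nu>\iota_0$ buys, and it is needed because otherwise the degree-$2\iota_0$ feature moments entering $\mathbb E[\psi\psi^\top]$ need not even be finite — replaces $\Psi_\iota$ by its truncated version on that event, and then applies a matrix Bernstein / Matrix-Chernoff bound to the sum of rank-one terms $\psi(x_i)\psi(x_i)^\top$, with "effective dimension" $N\asymp d^{\iota_0}$, using $d^{\iota_0}\log d=o(n)$ to make the deviation small relative to $d^{-\iota}$.

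The main obstacle is the concentration step. The feature dimension $N\asymp d^{\iota_0}$ is only a $\log d$ factor below $n$, the (even truncated) features remain heavy-tailed, their variances span many orders of magnitude ($\asymp d^{-|\mathbf m|}$ across degrees), and we need operator-norm control all the way down to the smallest-eigenvalue scale $d^{-\iota}$, which is as small as $d^{-\iota_0}$ and hence minuscule against $\lambda_{\max}\asymp 1$; making the truncation level compatible with constants that depend only on $\iota_0,\mathcal P,\{\alpha_i\}$ and not on $n,d$ is part of this difficulty. A practical route is a degree-stratified decomposition of $\Psi_\iota$ into homogeneous blocks, concentrating each block at its own scale and then controlling the cross terms. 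The population step is also delicate for a structural reason: as the explicit test direction $1-\tfrac1d\sum_a(x^{(a)})^2$ shows already for $\iota=2$, the exponent $d^{-\iota}$ is tight and cannot be obtained by naively multiplying $\lambda_{\min}(D)$ by $\lambda_{\min}(\Sigma)$.
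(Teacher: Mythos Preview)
Your structural setup is right and matches the paper: the factorization $\mathbf K^{[\le\iota]}=\tfrac1n\Psi_\iota\Psi_\iota^\top$, the reduction of all three claims to the single bound $\lambda_{\min}\big(\tfrac1n\Psi_\iota^\top\Psi_\iota\big)\ge C'd^{-\iota}$, and the change to a product orthogonal-polynomial basis are exactly how the paper begins (their $\Phi=\Psi\Lambda$ with $\|\Lambda\|,\|\Lambda^{-1}\|=O_{\iota,\mathcal P}(1)$). Your population analysis is also morally correct.

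The gap is the concentration step, and it is not a technicality. Under Assumption~\ref{asmp:dist-coordinate} the coordinates have only polynomial tails $\mathbb P(|z|>t)\lesssim t^{-\nu}$, so your truncation event $\{\max_{i,a}|x_i^{(a)}|\le R\}$ fails with probability $\lesssim nd\,R^{-\nu}$; to make this $e^{-\Omega(n/d^{\iota_0})}$ you would need $R$ exponentially large in $n/d^{\iota_0}$, which destroys the boundedness parameter in matrix Bernstein. (Your sentence ``this is exactly what the polynomial tail with $\nu>\iota_0$ buys'' is therefore incorrect: a power tail never buys an exponentially small truncation probability.) Even setting tails aside, two-sided operator-norm concentration at scale $d^{-\iota}$ when the largest eigenvalue is $\asymp 1$ and the feature dimension is $\asymp d^{\iota}$ is exactly the regime the paper singles out in its proof outline as the one where ``the deviation bound will typically be larger than $d^{-\iota}$''; a degree-stratified version does not escape this, because already on the pure degree-$\iota$ block the Bernstein bound has $L\asymp\|\psi^{(\iota)}(x)\|^2\asymp 1$ and the sub-exponential term $Lt/n$ dominates at $t\asymp d^{-\iota}$.

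What the paper does instead is abandon two-sided concentration entirely and use the \emph{small-ball method}. The key missing ingredient is an $L^2$--$L^4$ comparison for bounded-degree product polynomials: they prove $\mathbb E[f_u(X)^4]\le C_\iota\big(\mathbb E[f_u(X)^2]\big)^2$ uniformly in $u$ (their Lemma on $f_\gamma$), which via Paley--Zygmund yields a uniform small-ball estimate $\mathbb P\big(f_u(X)^2\ge\epsilon\,\mathbb E f_u(X)^2\big)\ge\delta$. Then the one-sided lower bound
\[
\tfrac1n\sum_i f_u(x_i)^2\;\ge\;\epsilon\,\mathbb E f_u(X)^2\cdot\tfrac1n\sum_i\mathbf 1\{f_u(x_i)^2\ge\epsilon\,\mathbb E f_u(X)^2\}
\]
reduces everything to Hoeffding on bounded indicators, giving $e^{-\Omega(n)}$ per direction \emph{regardless of tails}, after which a net argument (with a crude Lipschitz bound on $\|\tfrac1n\Psi^\top\Psi\|_{\mathrm{op}}$) finishes. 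The one-sidedness is precisely what lets the argument survive heavy tails and the tiny target scale $d^{-\iota}$; replacing it by Weyl + matrix Bernstein, as you propose, does not.
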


\subsection{Proof Outline.}

First, observe that
	\begin{align*}
		n \bm{K}_{ij} & = \sum_{\iota=0}^\infty \alpha_\iota \left( \frac{x_i^\top x_j}{d} \right)^{\iota} = \sum_{r_1,\cdots,r_d \geq 0} ~ c_{r_1 \cdots r_d} \alpha_{r_1+\cdots+r_d} p_{r_1 \cdots r_d}(x_i) p_{r_1 \cdots r_d}(x_j)/ d^{r_1+\cdots+r_d}
	\end{align*}
	with
		$c_{r_1 \cdots r_d} = \frac{(r_1+\cdots+r_d)!}{r_1!\cdots r_d!}$ and monomials $p_{r_1 \cdots r_d}(x_i) = (x_i[1])^{r_1}\cdots (x_i[d])^{r_d}$ with multi-index $r_1\cdots r_d$. The degree-bounded empirical kernel is then 
	\begin{align*}
		n \bm{K}_{ij}^{[\leq \iota]} :=  \sum_{\substack{r_1,\cdots,r_d \geq 0\\r_1+\cdots+r_d \leq \iota}}~ c_{r_1 \cdots r_d} \alpha_{r_1+\cdots+r_d} p_{r_1 \cdots r_d}(x_i) p_{r_1 \cdots r_d}(x_j)/ d^{r_1+\cdots+r_d} =\Phi \cdot \Phi^\top
	\end{align*}
	with polynomial features $\Phi \in \reals^{n \times \binom{\iota+d}{\iota}}$ of the form
	\begin{align*}
		\Phi_{i, (r_1 \cdots r_d)} = \left(c_{r_1 \cdots r_d} \alpha_{r_1+\cdots+r_d}\right)^{1/2}  p_{r_1 \cdots r_d}(x_i) / d^{(r_1+\cdots+r_d)/2} \enspace.
	\end{align*}
	The restricted lower isometry of the kernel is equivalent to establishing that all eigenvalues of the  sample covariance operator 
	\begin{align*}
		\Theta^{[\leq \iota]} := \frac{1}{n} \Phi^\top \cdot \Phi 
	\end{align*}
	are lower bounded by $d^{-\iota}$. Observe that non-zero eigenvalues of $\bm{K}^{[\leq \iota]}$ have one-to-one correspondance to that of $\Theta^{[\leq \iota]}$. 
	
	Hypothetically, if the mononomials $p_{r_1 \cdots r_d}(x)$ were orthogonal in $L^2_{\mathcal{P}_X}$, then we would have
	\begin{align*}
		\mathbb{E}\left[ \Theta^{[\leq \iota]} \right] = {\rm diag}(C(0),~ \cdots, ~C(\iota')\cdot d^{-\iota'} ,~  \cdots,~ \overbrace{C(\iota) \cdot d^{-\iota}}^{\binom{d+\iota-1}{d-1}~\text{such entries}} )
	\end{align*} 
	which would prove what we aim to establish on the smallest eigenvalue, at least in expectation. However, the orthogonality does not hold, and the monomials have a complex covariance structure that we have to tackle. To address the problem, we perform the \textit{Gram-Schimdt} process on polynomials
	$$\{1, t, t^2, \cdots \} \rightarrow \{1, q_1(t), q_2(t), \cdots \} \quad \text{$q$ orthogonal polynomial basis on $L^2_{\mathcal{P}}$},$$ 
	 and show that such basis is \textit{weakly-correlated}. Then under the new polynomial features
	\begin{align*}
		\Phi_{i, (r_1 \cdots r_d)} &\rightarrow \Psi_{i, (r_1 \cdots r_d)} = \left(c_{r_1 \cdots r_d} \alpha_{r_1+\cdots+r_d}\right)^{1/2} \prod_{j \in [d]} q_{r_j}(x_i[j]) / d^{(r_1+\cdots+r_d)/2} \\
		\Phi &= \Psi \Lambda,	\quad \Lambda \in \mathbb{R}^{\binom{\iota+d}{\iota} \times \binom{\iota+d}{\iota}} \quad \text{upper-triangular} \enspace.
	\end{align*}
	It turns out that through technical calculations, we can show that such \textit{weakly-correlated} polynomial features ensure that
	\begin{align*}
		\| \Lambda \|_{\rm op}, \| \Lambda^{-1} \|_{\rm op} \leq C(\iota)  \enspace.	
	\end{align*}
	We can now focus on studying the smallest eigenvalue of the un-correlated features since for any $u \in \mathbb{R}^{\binom{\iota+d}{\iota}}$,
	\begin{align*}
		 u^{\top} \Theta^{ [\leq \iota]} u = \frac{1}{n} \| \Phi u \|^2 = \frac{1}{n} \| \Psi \Lambda u \|^2 \geq \lambda_{\min} \left( \frac{1}{n} \Psi^\top \Psi \right) \| \Lambda u\|^2 \asymp \lambda_{\min} \left( \frac{1}{n} \Psi^\top \Psi \right) \| u \|^2 \enspace.
	\end{align*}
	The next challenge is in establishing a lower bound on the above smallest eigenvalue. Here, a naive use of standard concentration fails to provide strong high probability bounds. To see this, recall that if one wants to establish deviation bound via standard concentration like below
	\begin{align*}
		\sup_{u \in B_2^{\binom{d+\iota}{\iota}}} u^\top \left(\frac{1}{n} \Psi^\top \Psi  - \mathbb{E}\left[ \frac{1}{n} \Psi^\top \Psi  \right] \right) u \lesssim \frac{{\rm complexity}(B_2^{\binom{d+\iota}{\iota}})}{\sqrt{n}}  \enspace,
	\end{align*}
	the deviation bound will typically be larger than $d^{-\iota}$ for $\iota$ of our interest. To address this, we take the \textit{small-ball} approach, pioneered in \cite{koltchinskii2015bounding,mendelson2014learning}, which utilizes the non-negativity of the quadratic process. The intuition is as follows: due to positivity of $\langle \Psi(x_i), u \rangle^2$, the following lower bound holds
	\begin{align*}
		\| \Psi u\|^2 = \frac{1}{n} \sum_{i=1}^n \langle \Psi(x_i), u \rangle^2 \geq c_1 \mathbb{E}[\langle \Psi(x_i), u \rangle^2] \cdot \frac{1}{n}  \sum_{i=1}^n I_{\langle \Psi(x_i), u \rangle^2 \geq c_1 \mathbb{E}[\langle \Psi(X), u \rangle^2]}		\enspace.
	\end{align*}
	Suppose one can show that there exist absolute constants $c_1, c_2 >0$ such that
	\begin{align*}
		\mathbb{P}\left( \langle \Psi(x_i), u \rangle^2 \geq c_1 \mathbb{E}[\langle \Psi(X), u \rangle^2] \right) \geq c_2 \enspace,
	\end{align*}
	a condition referred to as the \textit{small-ball} property. Then it immediately follows that with probability at least $1 - \exp(-c \cdot n)$
	\begin{align*}
		\frac{1}{n}  \sum_{i=1}^n I_{\langle \Psi(x_i), u \rangle^2 \geq c_1 \mathbb{E}[\langle \Psi(X), u \rangle^2]} \geq c_2/2 \enspace.
	\end{align*}
	Now the union bound on $B_2^{\binom{d+\iota}{\iota}}$ does not affect the rate significantly since the probability control is overwhelmingly small (exponential in $n$). Last but not least, the technicality remains to verify the \textit{small-ball} property for weakly dependent polynomials via \textit{Paley-Zygmund} inequality.

We note that concurrent work of \cite{ghorbani2019linearized} also implies a similar control on the least eigenvalue under a different setting with different assumptions on the underlying data. Specifically, their result concerns the approximation error on random Fourier feature models. It could be translated to a risk bound due to the dual relationship between random features and random samples, in the case when there is no label noise $y_i = f_\star(x_i)$.

The rest of the paper is organized as follows. In Section~\ref{sec:RKHS} and \ref{sec:NN} we will apply the key Proposition~\ref{prop:key} to obtain generalization results for kernel ridgeless regression and wide neural networks, respectively. Sections~\ref{sec:variance_proof} and \ref{sec:bias_proof} will be devoted to the proofs of Theorem~\ref{thm:variance} and Theorem~\ref{thm:bias} on the variance and bias of the minimum-norm interpolant. Section~\ref{sec:main_proof} in the Appendix will focus on the main steps behind proving Proposition~\ref{prop:key}. Appendix also contains several supporting lemmas.

\section{Application to Kernel Ridgeless Regression}
\label{sec:RKHS}

	The following bias-variance decomposition holds, conditionally on $\bm{X}$:
	\begin{align}
		\En_{\bm{Y}}\norm{\algo - f_* }^2_{\mathcal{P}_X}  = \En_{\bm{Y}}\norm{\algo - \En_Y[\algo] }^2_{\mathcal{P}_X}  + \norm{\En_{\bm{Y}}[\algo] - f_* }^2_{\mathcal{P}_X} \enspace.
	\end{align}
	Here $\En_{\bm{Y}}$ denotes expectation only over the $\bm{Y}$ vector. In this section, we refer to the first term as \textit{Variance}, and the second term as (squared) \textit{Bias}. As mentioned earlier, the variance term can be upper bounded as
\begin{align*}
	\sigma^2_Y \cdot \En_{x\sim \mathcal{P}_X} \norm{k(x,\bm{X})^\tr k(\bm{X},\bm{X})^{-1}}^2,
\end{align*}
thanks to the closed-form of $\algo$. In the rest of this section we shall assume, for brevity, that $\sigma^2_Y=1$.

\subsection{Variance}
\label{sec:variance}

In this section, to control the variance term we make a stronger assumption on the tail behavior of $\mathcal{P}$.
\begin{theorem}[Variance]
	\label{thm:variance}
	Let $x\sim \mathcal{P}_X$, $\bm{X}\sim \mathcal{P}^{n\times d}$, and $\mathcal{P}$ be sub-Gaussian. 
	Consider $h \in \mathcal{C}^\infty(\mathbb{R})$, denote $h(t) = \sum_{i=0}^\infty \alpha_i t^i$ with corresponding Taylor coefficients $\{\alpha_i\}_{i=0}^\infty$. Consider $k(x,z) = h(x^T z/d)$ for $x, z \in \mathcal{X}$.
	\begin{enumerate}[(i)]
		\item Suppose that:
		\begin{itemize}
			\item $\alpha_1,\cdots, \alpha_ \iota>0$;
			\item there is $\iota' \geq 2 \iota+3$ such that $\alpha_{\iota'}>0$.
		\end{itemize}
		Assume $d^{\iota}\log d \lesssim n \lesssim d^{\iota+1}$. Then with probability at least $1- e^{- \Omega(n/d^{\iota})}$ w.r.t. $\bm{X}$,

		\begin{equation}
			\text{Variance} \leq \mathbb{E}_{x \sim \mathcal{P}_X}\|k(\bm{X}, \bm{X})^{-1}k(\bm{X},x)\|^2\le C\left(\frac{d^{\iota}}{n}+\frac{n}{d^{\iota+1}}\right) \enspace.
		\end{equation}

		\item Suppose that the Taylor expansion coefficients satisfy for some $\iota>0$:

		\begin{itemize}
			\item $\alpha_1,\cdots, \alpha_{\iota}>0$;
			\item $\forall \iota'> \iota, \alpha_{\iota'}=0$, i.e. $k$ is a polynomial kernel.
		\end{itemize}
		Assume $d^{\iota}\log d \lesssim n$.
		Then with probability at least $1- e^{- \Omega(n/d^{\iota})}$ w.r.t. $\bm{X}$,

		\begin{equation}
			\text{Variance} \leq \mathbb{E}_{x \sim \mathcal{P}_X}\|k(\bm{X}, \bm{X})^{-1}k(\bm{X},x)\|^2\le C\frac{d^{\iota}}{n}\enspace.
		\end{equation}
\end{enumerate}
\end{theorem}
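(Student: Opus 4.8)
The plan is to begin from the displayed variance bound $\mathbb{E}_x\|K^{-1}k(\bm{X},x)\|^2$ (with $K := k(\bm{X},\bm{X})$ and $\sigma_Y^2 = 1$) and to split the kernel along the polynomial-degree filtration from the proof outline of Proposition~\ref{prop:key}. Write $k(\bm{X},x) = \Phi\,\phi(x) + v_{>}(x)$, where $\Phi \in \mathbb{R}^{n\times\binom{\iota+d}{\iota}}$ is the degree-$\le\iota$ monomial feature matrix of the sample, $\phi(x)$ is the matching feature vector of the test point, and $v_{>}(x) := \big(h^{[>\iota]}(x_i^\top x/d)\big)_{i=1}^n$ is the degree-$>\iota$ remainder; then $\mathbb{E}_x\|K^{-1}k(\bm{X},x)\|^2 \le 2\,\mathbb{E}_x\|K^{-1}\Phi\phi(x)\|^2 + 2\,\mathbb{E}_x\|K^{-1}v_{>}(x)\|^2$, and I would bound the two pieces separately, conditioning throughout on the event of Proposition~\ref{prop:key} (with $\iota_0 = \iota$) together with a handful of elementary concentration events. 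Two structural facts carry the argument. First, since all $\alpha_i \ge 0$, every component $(x^\top z/d)^m$ is a PSD kernel, so $K \succeq \Phi\Phi^\top$, and by Proposition~\ref{prop:key} the restriction of $\Phi\Phi^\top = n\,\mathbf{K}^{[\le\iota]}$ to its range $V := \mathrm{range}(\Phi)$ has smallest eigenvalue $\gtrsim n\,d^{-\iota}$. Second, the tail matrix $B := K - \Phi\Phi^\top = n\,\mathbf{K}^{[>\iota]}$ is diagonally dominant, with diagonal entries $\asymp h^{[>\iota]}(\mathbb{E}_{\mathcal{P}}z^2) =: c > 0$ (positive precisely because $\alpha_{\iota'} > 0$) and off-diagonal part of negligible operator norm, so $\lambda_{\min}(K) \ge \lambda_{\min}(B) \ge c/2$, i.e.\ $\|K^{-1}\|_{\mathrm{op}} = O(1)$. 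This second fact is where the sub-Gaussian hypothesis on $\mathcal{P}$ and the largeness of $\iota'$ enter: they let us control $\|B - \mathbb{E}B\|_{\mathrm{op}}$ and concentrate $\|x_i\|^2/d$ uniformly in $i$, and --- using $n \lesssim d^{\iota+1}$ --- keep both below $c/2$ with probability at least $1 - e^{-\Omega(n/d^{\iota})}$.

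For the \emph{tail term} I would use $\|K^{-1}v_{>}(x)\|^2 \le (2/c)^2\,\|v_{>}(x)\|^2$ together with $\mathbb{E}_x\|v_{>}(x)\|^2 = \sum_{i=1}^n \mathbb{E}_x\big[h^{[>\iota]}(x_i^\top x/d)^2\big]$. A Gaussian-type moment computation shows that, on the conditioning event, each summand is $\asymp d^{-m_1}$, where $m_1 := \min\{m > \iota : \alpha_m > 0\} \ge \iota+1$ (the dominant term being $\alpha_{m_1}^2\,\mathbb{E}_x[(x_i^\top x/d)^{2m_1}]$); hence $\mathbb{E}_x\|v_{>}(x)\|^2 \lesssim n\,d^{-m_1} \lesssim n/d^{\iota+1}$, giving the $O(n/d^{\iota+1})$ summand in part (i).

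For the \emph{low-degree term} it suffices to bound $\|K^{-1}\Phi\|_{\mathrm{op}}^2$, since then $\mathbb{E}_x\|K^{-1}\Phi\phi(x)\|^2 \le \|K^{-1}\Phi\|_{\mathrm{op}}^2\,\mathbb{E}_x\|\phi(x)\|^2$ and a moment computation gives $\mathbb{E}_x\|\phi(x)\|^2 \asymp \sum_{j\le\iota}\alpha_j(\mathbb{E}_{\mathcal{P}}z^2)^j = \Theta(1)$. Fix a unit vector $u$ and set $z := K^{-1}\Phi u$. Because $Kz = \Phi u$ lies in $V$, writing $z = z_V + z_\perp$ along $V \oplus V^\perp$ and writing out the block equations $Kz = \Phi u$ forces $z_\perp = -(P_{V^\perp}BP_{V^\perp})^{-1}P_{V^\perp}BP_V\,z_V$ and $z_V = S^{-1}\Phi u$, where $S := \Phi\Phi^\top|_V + (\text{Schur complement of } B \text{ w.r.t.\ the } V^\perp \text{ block}) \succeq \Phi\Phi^\top|_V$, so $\lambda_{\min}(S) \gtrsim n\,d^{-\iota}$ by Proposition~\ref{prop:key}. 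From $Sz_V = \Phi u$ one reads off $\|\Phi^\top z_V\| \le 1$ and $z_V^\top S z_V \le 1$, whence $\|z_V\|^2 \le z_V^\top S z_V / \lambda_{\min}(S) \lesssim d^{\iota}/n$. For the leakage, $(P_{V^\perp}BP_{V^\perp})^{-1}$ has operator norm $\le 2/c$ (a compression of $B \succeq \tfrac{c}{2}I$), and the cross-block $P_{V^\perp}BP_V$ has small operator norm because the only large-norm part of $B$ is a near-rank-one piece along the all-ones direction $\mathbf{1}$, which $P_{V^\perp}$ annihilates since $\mathbf{1} \in V$ (it is carried by the degree-$0$ feature; the degenerate case $\alpha_0 = 0$ needs a minor modification, enlarging $V$ by $\mathrm{span}(\mathbf{1})$ and absorbing this piece from $B$ itself). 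Hence $\|z_\perp\| \lesssim \|z_V\|$ and $\|z\|^2 \lesssim d^{\iota}/n$, so $\|K^{-1}\Phi\|_{\mathrm{op}}^2 \lesssim d^{\iota}/n$, which gives the $O(d^{\iota}/n)$ summand; adding the two pieces proves part (i).

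Part (ii) is the tail-free specialization: $v_{>} \equiv 0$ and $K = \Phi\Phi^\top$ is singular, so $K^{-1}$ denotes the pseudoinverse and $K^{-1}\Phi\phi(x) = (\Phi\Phi^\top)^{+}\Phi\phi(x)$, for which $\|(\Phi\Phi^\top)^{+}\Phi\phi(x)\|^2 = \phi(x)^\top(\Phi^\top\Phi)^{-1}\phi(x)$; taking $\mathbb{E}_x$ and using $\mathrm{tr}\big((\Phi^\top\Phi)^{-1}\mathbb{E}_x[\phi(x)\phi(x)^\top]\big) \le \|(\Phi^\top\Phi)^{-1}\|_{\mathrm{op}}\,\mathbb{E}_x\|\phi(x)\|^2 \lesssim (d^{\iota}/n)\cdot\Theta(1)$ by Proposition~\ref{prop:key} gives the claimed $O(d^{\iota}/n)$, and no upper restriction on $n$ is needed since there is no tail to condition away. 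The hard part will be the bound $\|K^{-1}\Phi\|_{\mathrm{op}}^2 \lesssim d^{\iota}/n$ of the third paragraph: $K$ has eigenvalues spread across $[\Theta(1), \Theta(n)]$ and does not leave the well-conditioned subspace $V$ invariant, so Proposition~\ref{prop:key} cannot be invoked directly and a crude estimate through $\|K^{-1}\|_{\mathrm{op}}$ overshoots by a factor of order $d^{\iota}$; the argument succeeds only because $K^{-1}\Phi u$ leaks out of $V$ by at most an $O(1)$ factor, which in turn rests on the fact that the one dangerous, large-norm direction of the tail $B$ --- namely $\mathbf{1}$ --- already lies inside $V$. The remaining delicate point is the tail conditioning --- making $\lambda_{\min}(B) \ge c/2$ and the cross-block bound hold at probability $1 - e^{-\Omega(n/d^{\iota})}$ --- which is the source of both the sub-Gaussian assumption and the constraints $\iota' \ge 2\iota+3$ and $n \lesssim d^{\iota+1}$.
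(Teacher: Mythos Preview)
Your overall architecture matches the paper's: split $k(\bm{X},x)$ into a degree-$\le\iota$ piece and a tail, control the tail with $\|K^{-1}\|_{\rm op}=O(1)$, and control the low-degree piece through Proposition~\ref{prop:key}. Part~(ii) is handled cleanly and correctly. However, the argument for part~(i) has a genuine gap at exactly the point you flag as ``the hard part.''

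\textbf{The claim that $B=n\mathbf{K}^{[>\iota]}$ has negligible off-diagonal operator norm is false.} If any $\alpha_m>0$ with $\iota<m\le 2\iota+2$ (nothing in the hypotheses rules this out), the degree-$m$ off-diagonal row sum is $\asymp n(\log n/d)^{m/2}$, which for $n\asymp d^{\iota+1}$ is of order $d^{\iota+1-m/2}(\log n)^{m/2}\to\infty$. So $B$ is \emph{not} diagonally dominant, and its deviation from a scalar matrix is not small. The paper sidesteps this by isolating the single component $n\mathbf{K}^{[\iota']}$ with $\iota'\ge 2\iota+3$: \emph{that} piece is diagonally dominant (row sum $\lesssim n(\log n/d)^{\iota'/2}\to 0$), and since $K\succeq n\mathbf{K}^{[\iota']}$ this already gives $\lambda_{\min}(K)\gtrsim c$. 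Your conclusion $\lambda_{\min}(B)\ge c/2$ is salvageable by the same monotonicity ($B\succeq n\mathbf{K}^{[\iota']}$), but not via the diagonal-dominance route you wrote.

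\textbf{The leakage bound then collapses.} Your control of $\|z_\perp\|$ rests on $\|P_{V^\perp}BP_V\|_{\rm op}$ being small, justified by the assertion that the only large-norm part of $B$ is ``a near-rank-one piece along $\mathbf{1}$.'' This is not the structure of $B$: the large off-diagonal contribution comes from the intermediate-degree pieces $n\mathbf{K}^{[m]}$, $\iota<m<\iota'$, which are full-rank Hadamard powers, not aligned with $\mathbf{1}$. With only $B\succeq 0$ and $B_{22}\succeq(c/2)I$ one gets $\|z_\perp\|\le\sqrt{(2/c)\|B_{11}\|_{\rm op}}\,\|z_V\|$, and $\|B_{11}\|_{\rm op}\le\|B\|_{\rm op}$ is not $O(1)$ here; so the conclusion $\|K^{-1}\Phi\|_{\rm op}^2\lesssim d^\iota/n$ is not established. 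In fact this operator-norm route looks too coarse: a toy block model already shows $\|K^{-1}\Phi\|_{\rm op}^2$ can exceed $d^\iota/n$ by a factor $\sim\|B\|_{\rm op}$ when $B$ couples $V$ and $V^\perp$ strongly.

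\textbf{What the paper does instead.} The paper does \emph{not} lump all low degrees into a single $\Phi$, nor does it try to bound $\|K^{-1}\Phi\|_{\rm op}$. It splits degree by degree: for each $i\le\iota$ the vector $v_i=\alpha_i(\bm{X}x)^{\circ i}/d^i$ lies in $V_i:=\mathrm{range}(\mathbf{K}^{[\le i]})$, and one uses $\mathbf{K}\big|_{V_i}=\mathbf{K}^{[\le i]}+\Pi_{V_i}\mathbf{K}^{[>i]}\Pi_{V_i}\succeq\mathbf{K}^{[\le i]}$ together with Proposition~\ref{prop:key} (applied at level $i$, giving the sharper bound $\|(\mathbf{K}^{[\le i]})^+\|_{\rm op}\lesssim d^i$) to obtain $\|\mathbf{K}^{-1}v_i\|^2\lesssim d^{2i}\|v_i\|^2$, and then sums. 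The degree-by-degree refinement is what produces the correct $d^i/n$ contribution at each level rather than the cruder $d^\iota/n$ your lumped bound would give --- and it avoids needing any control on the cross-block of $B$.
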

The proof of the above theorem, which appears in Section~\ref{sec:variance_proof}, depends on breaking the variance into two parts depending on the polynomial degree: the first part we can further upper bounded via the key restricted lower isometry proposition on a filtration of spaces (ordered according to the degree of the polynomials), and for the second part we utilize the fact that $n\bm{K} \succeq \Omega(1) \bm{I}_{n}$.


\subsection{Bias}

In this section, we bound the bias part for the min-norm interpolated solution. In fact, we will show that, under a suitable assumption, the squared bias is upper bounded by a multiple factor of the variance term, studied in the previous section. 

\begin{theorem}[Bias]
	\label{thm:bias} 
	Assume that the target function $f_*(x) = \mathbb{E}[Y|X=x]$ can be represented as
	\begin{align}
		f_*(x) = \int_{\mathcal{X}} k(x, z) \rho_*(z) \mathcal{P}_X(dz)
	\end{align}
	with $\int_{\mathcal{X}} \rho_*^4(x) \mathcal{P}_X (dx) \leq C$ for $C>0$. Assume that $\sup_{x \in \mathcal{X}} k(x, x) \leq M$. Then we have
	\begin{align*}
		\text{Bias} &:= \mathbb{E}_{x \sim \mathcal{P}_X} \left( k(x, \bm{X}) k(\bm{X}, \bm{X})^{-1} f_*(\bm{X}) - f_*(x) \right)^2 \\
		&\leq C_1(\bm{X}) \cdot \mathbb{E}_x \left\| k(\bm{X}, \bm{X})^{-1} k(\bm{X}, x) \right\|^2 + \frac{C_2(\bm{X})}{n}
	\end{align*}
	where the scalar random variables are bounded in $\ell_2$-sense: $\mathbb{E}_{\bm{X}} [C_1(\bm{X})]^2, \mathbb{E}_{\bm{X}} [C_2(\bm{X})]^2 \precsim 1$.
\end{theorem}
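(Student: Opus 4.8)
The plan is to relate the bias of the minimum-norm interpolant to the variance quantity $\En_x\|k(\bm X,\bm X)^{-1}k(\bm X,x)\|^2$ by exploiting the representation $f_*(x)=\int k(x,z)\rho_*(z)\,\mathcal P_X(dz)$, which says exactly that $f_*$ lies in (the closure of) the RKHS range of the integral operator. Writing $\algo$ for the interpolant, $\algo(x)=k(x,\bm X)k(\bm X,\bm X)^{-1}f_*(\bm X)$, and the bias is $\En_x(\algo(x)-f_*(x))^2$. The first step is to substitute the integral representation into both occurrences of $f_*$: in $f_*(\bm X)$ it becomes $\int k(\bm X,z)\rho_*(z)\,\mathcal P_X(dz)$, and in $f_*(x)$ it is $\int k(x,z)\rho_*(z)\,\mathcal P_X(dz)$. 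Interchanging expectation over $x$ and the $z$-integral (justified by the boundedness $\sup_x k(x,x)\le M$ and $\En\rho_*^4\le C$ via Cauchy–Schwarz), I would aim to write the bias in the form $\En_{x}\big(\int [k(x,\bm X)k(\bm X,\bm X)^{-1}k(\bm X,z)-k(x,z)]\rho_*(z)\,\mathcal P_X(dz)\big)^2$.

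The key observation is that the inner bracket is, for fixed $z$, the residual of interpolating the function $z'\mapsto k(z,z')$ at the design points — i.e. exactly the quantity governing the bias if the target were the single kernel section $k(\cdot,z)$. The natural route is then to introduce the operator $P_{\bm X}$ that maps a function $g$ to its interpolant $x\mapsto k(x,\bm X)k(\bm X,\bm X)^{-1}g(\bm X)$, note $\algo = P_{\bm X}f_*$, and use $\|P_{\bm X}g - g\|_{\mathcal P_X}^2 \le \langle \text{something}\rangle$. Concretely, I expect to bound the bias by a term of the form $\big(\En_x \|k(\bm X,\bm X)^{-1}k(\bm X,x)\|^2\big)\cdot\big(\text{a norm of }\rho_*\text{ against the kernel}\big)$ plus a remainder. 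The cleanest implementation: expand $\En_x(\algo(x)-f_*(x))^2$ and bound each cross term; the "variance-like" piece $\En_x\|k(\bm X,\bm X)^{-1}k(\bm X,x)\|^2$ appears multiplied by the sample quantity $\tfrac1n\|f_*(\bm X)\|^2$ or $\rho_*(\bm X)^\tr \bm K \rho_*(\bm X)$-type expressions, and one identifies $C_1(\bm X)$ with such a data-dependent scalar; the leftover, coming from replacing the population integral over $z$ by the empirical average over the $n$ design points (a Monte Carlo / U-statistic error), contributes the $C_2(\bm X)/n$ term. Checking $\En_{\bm X}C_1^2\precsim 1$ and $\En_{\bm X}C_2^2\precsim 1$ is then a routine moment computation using $\En\rho_*^4\le C$, $\sup k\le M$, and the product structure of $\mathcal P_X$ (so that, e.g., $\En[(\tfrac1n\sum_i\rho_*(x_i)k(x_i,z))^2]$ splits into diagonal and off-diagonal terms, the diagonal giving the $1/n$ and the off-diagonal matching the population object).

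The main obstacle I anticipate is handling the matrix $k(\bm X,\bm X)^{-1}$, whose smallest eigenvalue is only a constant, so one cannot afford to lose factors of its norm: the decomposition must be arranged so that $k(\bm X,\bm X)^{-1}$ appears only in the already-controlled combination $k(\bm X,\bm X)^{-1}k(\bm X,x)$ (whose squared norm is the variance term) and in combinations of the form $k(\bm X,\bm X)^{-1}k(\bm X,z)$ integrated against $\rho_*$, which by the same representation equals $k(\bm X,\bm X)^{-1}f_*(\bm X)$ — a bounded-in-expectation vector. The second delicate point is the interchange of the $x$-expectation with the $z$-integral and with the quadratic, and controlling the resulting fourth-order moments in $\rho_*$ and in the $x_i$'s; here the assumptions $\int\rho_*^4\,\mathcal P_X\le C$ and $\sup_x k(x,x)\le M$ are exactly what make the bookkeeping close, and Cauchy–Schwarz together with $|k(x,z)|\le\sqrt{k(x,x)k(z,z)}\le M$ should suffice. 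I do not expect to need Proposition~\ref{prop:key} directly in this proof — it enters only afterward, through Theorem~\ref{thm:variance}, to turn the variance factor $\En_x\|k(\bm X,\bm X)^{-1}k(\bm X,x)\|^2$ into the explicit rate.
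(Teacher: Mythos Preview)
Your overall strategy matches the paper's, and you correctly isolate the $C_2(\bm X)/n$ term as the Monte--Carlo error $\|f_*-\tilde f_n\|_{\mathcal P_X}^2$ with $\tilde f_n(x):=\tfrac1n\sum_i k(x,x_i)\rho_*(x_i)$ the empirical version of the integral representation. You also correctly diagnose the main obstacle: $k(\bm X,\bm X)^{-1}$ must appear only in the controlled combination $k(\bm X,\bm X)^{-1}k(\bm X,x)$.

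Where the proposal has a gap is in the $C_1$ term. Your suggested candidates $\tfrac1n\|f_*(\bm X)\|^2$ or $\rho_*(\bm X)^\tr \mathbf K\rho_*(\bm X)$ do not arise from any step you describe: if you simply Cauchy--Schwarz $\algo(x)=\inner{k(\bm X,\bm X)^{-1}k(\bm X,x),\,f_*(\bm X)}$, the factor is $\|f_*(\bm X)\|^2\asymp n$, with no $1/n$, and the bound is vacuous. The paper's device is to use the \emph{same} surrogate $\tilde f_n$ once more, but now inside the interpolant: since $\tilde f_n$ lies in $\mathrm{span}\{k(\cdot,x_i)\}$, one has the exact identity
\[
k(x,\bm X)k(\bm X,\bm X)^{-1}\tilde f_n(\bm X)=\tilde f_n(x),
\]
so the split $P_{\bm X}f_*-f_*=P_{\bm X}(f_*-\tilde f_n)+(\tilde f_n-f_*)$ makes the inverse disappear from the second piece, and in the first piece Cauchy--Schwarz produces $C_1(\bm X)=\|f_*(\bm X)-\tilde f_n(\bm X)\|^2$ rather than $\|f_*(\bm X)\|^2$. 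The centering is what buys you $O(1)$ instead of $O(n)$: each coordinate $f_*(x_j)-\tilde f_n(x_j)$ is (up to a $1/n$ diagonal correction) a mean-zero average $\tfrac1{n-1}\sum_{i\neq j}[k(x_j,x_i)\rho_*(x_i)-f_*(x_j)]$, and a leave-one-out moment computation gives $\En_{\bm X}\|f_*(\bm X)-\tilde f_n(\bm X)\|^2\precsim 1$ and $\En_{\bm X}\|f_*(\bm X)-\tilde f_n(\bm X)\|^4\precsim 1$. Without this observation your outline does not close.
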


We remark that $\mathbb{E}_{x} \left\| k(\bm{X}, \bm{X})^{-1} k(\bm{X}, x) \right\|^2$ is the expression for the upper bound on variance in Section~\ref{sec:variance}. The statement can be strengthened to the ``in probability'' statement, as follows
	\begin{align*}
		\text{Bias} &:= \| k(x, \bm{X}) k(\bm{X}, \bm{X})^{-1} f_*(\bm{X}) - f_*(x) \|_{\mathcal{P}_X}^2 \\
		&\leq  \frac{1}{\sqrt{\delta}} \cdot \left(\mathbb{E}_x \left\| k(\bm{X}, \bm{X})^{-1} k(\bm{X}, x) \right\|^2 \vee \frac{1}{n} \right)
	\end{align*}
	with probability $1-\delta$ on $\bm{X}$. See Proposition~\ref{prop:in-prob} for details. We emphasize that here the factor $\delta^{-1/2}$ has \textit{no dependence} on the dimension $d$. Note that one can relax the assumption of $\int_{\mathcal{X}} \rho_*^4(x) \mathcal{P}_X (dx) \leq C$ to $\int_{\mathcal{X}} \rho_*^2(x) \mathcal{P}_X (dx) \leq C$ at the cost of the factor $\delta^{-1}$ instead of $\delta^{-1/2}$ in the above statement.


\section{Applications to Wide Neural Networks}
\label{sec:NN}



Before extending the results to neural networks, we need to study a Neural-Tangent-type kernel defined below in \eqref{eq:cos}, which is slightly different from the inner-product kernel as in \eqref{eq:kernel-interproduct}. Specifically, we consider kernels of the following form:
\begin{equation}
	\label{eq:cos}
	k(x,x')=\| x\|\| x'\|\sum\limits_{i=0}^\infty \alpha_i \left( \cos\angle (x,  x') \right)^i 
\end{equation}
where $ \cos\angle ( x,  x') = x^\top  x'/\|  x\|\|  x'\|$. Suppose that $\alpha_i\ge 0$ for all $i$ and $\sup \{i: \alpha_i>0\}=\infty$. Note that the above kernel reduces to the inner-product kernel when the data lies on a fixed radius sphere $\| x \| \equiv R$.

\begin{corollary}[Generalization of Neural-Tangent-Type Kernels]
	\label{coro:ntk}
	Consider the type of kernels defined in \eqref{eq:cos}, which subsumes the Neural Tangent Kernel as a special case. Consider $n$ $i.i.d.$ data pairs $(x_i, y_i)$ drawn from $\mathcal{P}_{X, Y}$, and denote the target function $f_*(x)=\mathbb{E}[Y|X=x]$. Suppose the conditions on $f_*$ and $\mathcal{P}_{X, Y}$ specified by Theorems~\ref{thm:variance}-\ref{thm:bias} are satisfied. Consider integer $\iota$ that satisfy $d^{\iota} \log d \lesssim n \lesssim d^{\iota+1}/\log d$.
	Then the result of Theorem~\ref{thm:informal} can be generalized to such kernels: with high probability, the following upper bound on the risk holds:
	\begin{align*}
		\mathbb{E}\left[ \| \widehat f - f_* \|^2_{\mathcal{P}_X} | \bm{X} \right] \leq C \cdot \left( \frac{d^{\iota}}{n}+\frac{n \log d}{d^{\iota+1}}  \right) \enspace.
	\end{align*}
\end{corollary}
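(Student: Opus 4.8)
The plan is to reduce Corollary~\ref{coro:ntk} to the inner-product kernel case already handled by Theorems~\ref{thm:variance}--\ref{thm:bias} via a decoupling of the ``radial'' part $\|x\|\|x'\|$ from the ``angular'' part $\sum_i \alpha_i (\cos\angle(x,x'))^i$. Write $g(t) = \sum_i \alpha_i t^i$, so the kernel is $k(x,x') = \|x\|\|x'\| \, g(x^\top x'/(\|x\|\|x'\|))$. First I would observe that under Assumption~\ref{asmp:dist-coordinate} with a suitable tail/sub-Gaussian hypothesis, the norm $\|x\|^2 = \sum_{j=1}^d x[j]^2$ concentrates sharply: $\|x\|^2 = d\,\sigma^2(1+o(1))$ where $\sigma^2 = \mathbb{E}_{z\sim\mathcal P}[z^2]$, with the fluctuation of order $\sqrt{d}\log^{1/2} n$ after a union bound over the $n$ samples, hence $\|x\|/\sqrt{d\sigma^2} = 1 + O(\sqrt{\log n / d})$ uniformly over $i\le n$ with probability $1 - n^{-\Omega(1)}$. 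Consequently $x^\top x'/(\|x\|\|x'\|) = x^\top x'/(d\sigma^2) + (\text{error})$, and $k(x,x') = d\sigma^2 \, g(x^\top x'/(d\sigma^2)) + (\text{perturbation})$. Up to the harmless global scaling $d\sigma^2$ (which cancels in $k(x,\bm X)^\top k(\bm X,\bm X)^{-1}$), the leading term is exactly an inner-product kernel of the form $h(x^\top x'/d)$ with $h(t) = \sigma^2 g(t/\sigma^2)$, whose Taylor coefficients are $\alpha_i \sigma^{2-2i} > 0$ under the hypothesis $\alpha_i\ge 0$, $\sup\{i:\alpha_i>0\}=\infty$; in particular the positivity conditions required by Proposition~\ref{prop:key} and Theorems~\ref{thm:variance}--\ref{thm:bias} are inherited.

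Second, I would quantify the perturbation. Decompose $\mathbf K = \mathbf K^{\mathrm{ip}} + \mathbf E$, where $\mathbf K^{\mathrm{ip}}$ is the (normalized) inner-product kernel matrix built from $h$ and $\mathbf E$ collects the error from replacing $\|x_i\|\|x_j\|$ by $d\sigma^2$ and the corresponding argument shift in $g$. Using the Lipschitz-on-compacts property of $g$ (here I use $d^{\iota}\log d \lesssim n \lesssim d^{\iota+1}/\log d$, and the boundedness $\sup_x k(x,x)\le M$ from Theorem~\ref{thm:bias}, to keep all relevant arguments in a bounded region), the entrywise bound $|\mathbf E_{ij}| \lesssim \sqrt{\log n/d}$ propagates to an operator-norm bound $\|\mathbf E\|_{\mathrm{op}} \lesssim \sqrt{\log n/d}$ on the normalized matrix — this is where the extra $\log d$ in the statement's rate $\frac{n\log d}{d^{\iota+1}}$ comes from. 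Since Proposition~\ref{prop:key} gives $\lambda_{\min}$ of the degree-$\le\iota$ part of $\mathbf K^{\mathrm{ip}}$ of order $d^{-\iota}$ and the residual high-degree part contributes $\Omega(1)$ on the full matrix, a standard Weyl/resolvent perturbation argument shows the variance proxy $\mathbb{E}_x\|k(\bm X,\bm X)^{-1}k(\bm X,x)\|^2$ for the NTK-type kernel is within a $(1+o(1))$ factor (plus an additive $\log d/d \cdot \frac{n}{d^{\iota}}$-type term) of the inner-product-kernel bound $C(d^{\iota}/n + n/d^{\iota+1})$, yielding the claimed $C(d^{\iota}/n + n\log d/d^{\iota+1})$. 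The bias term is handled identically: $f_*$ is assumed to be $\int k(x,z)\rho_*(z)\mathcal P_X(dz)$ for the NTK-type $k$, and Theorem~\ref{thm:bias}'s argument — which only uses $\sup_x k(x,x)\le M$ and the $L^4$ bound on $\rho_*$, both stable under the perturbation — shows the bias is dominated by the variance proxy up to an $O(1/n)$ additive term.

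The main obstacle I expect is controlling $\|\mathbf E\|_{\mathrm{op}}$ sharply enough: a crude bound $\|\mathbf E\|_{\mathrm{op}} \le \|\mathbf E\|_F \le n \max_{ij}|\mathbf E_{ij}|$ is far too lossy, and one really needs either (a) to exploit that $\mathbf E$ itself has approximate low-rank-plus-concentration structure (each $x_i$ contributes a rank-one correction $(\|x_i\|/\sqrt{d\sigma^2}-1)\times$(something), so $\mathbf E$ is close to $D\mathbf K^{\mathrm{ip}} + \mathbf K^{\mathrm{ip}}D$ for a diagonal $D$ with $\|D\|_{\mathrm{op}}\lesssim\sqrt{\log n/d}$), plus a separate small-norm term from the Taylor remainder of $g$; or (b) to apply a matrix-Bernstein/ non-commutative concentration bound to the centered version of $\mathbf E$. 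Option (a) is cleaner: it reduces everything to the spectral facts about $\mathbf K^{\mathrm{ip}}$ already proven, and then the NTK-vs-inner-product comparison becomes an algebraic identity up to lower-order terms. A secondary subtlety is that the $\frac{1}{n}$-normalization interacts with the fact that $\mathbf K^{\mathrm{ip}}$'s smallest relevant eigenvalue is $d^{-\iota}$, so one must verify that $\|\mathbf E\|_{\mathrm{op}} = o(d^{-\iota})$ fails in general — indeed $\sqrt{\log n/d}$ is much larger than $d^{-\iota}$ — so the comparison cannot be done naively at the level of $\lambda_{\min}$ alone but must be carried out directly on the quadratic form $\mathbb{E}_x\|k(\bm X,\bm X)^{-1}k(\bm X,x)\|^2$, tracking how the error distributes across the eigenspaces of different degrees. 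This is the step that forces the $\log d$ factor and is where I would spend most of the effort.
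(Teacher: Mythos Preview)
Your overall strategy---decouple the radial factor $\|x\|\|x'\|$ from the angular part and reduce to the inner-product case---is the right one, and matches the paper's route in spirit. However, you are working much harder than necessary, and your option (a) as written has a genuine issue.

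The paper's proof rests on one clean algebraic observation you are circling but not quite landing on: expanding the kernel degree by degree gives the \emph{exact} identity
\[
k^{[\iota]}(\bm X,\bm X) \;=\; \alpha_\iota\, A^{1-\iota}\, h_\iota(\bm X,\bm X)\, A^{1-\iota},
\qquad
k^{[\iota]}(\bm X,x) \;=\; \alpha_\iota\, A^{1-\iota}\, h_\iota(\bm X,x)\,\|x/\sqrt d\|^{1-\iota},
\]
where $A=\mathrm{diag}(\|x_i/\sqrt d\|)$ and $h_\iota(x,x')=(x^\top x'/d)^\iota$. There is no additive error $\mathbf E$ at all: each degree component of the NTK-type kernel is a diagonally conjugated inner-product kernel, with a different diagonal power per degree. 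Since $A\asymp I_n$ with high probability, $\|(k^{[\iota]})^+\|_{\mathrm{op}}\asymp\|h_\iota^+\|_{\mathrm{op}}$, and the whole variance analysis of Theorem~\ref{thm:variance} / Proposition~\ref{prop:key} transfers degree-by-degree without any perturbation or resolvent argument. Your proposal to approximate the full kernel by a \emph{single} inner-product kernel $h(t)=\sigma^2 g(t/\sigma^2)$ and then write $\mathbf E\approx D\mathbf K^{\mathrm{ip}}+\mathbf K^{\mathrm{ip}}D$ with one degree-independent diagonal $D$ does not capture this: the true diagonal factor is $A^{1-\iota}$, and the exponent $1-\iota$ ranges up to $\iota_0\asymp\log n$, so a first-order linearization in a single small $D$ is not uniform over the degrees you need. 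Working degree-by-degree makes the comparison exact and removes the obstacle you identify in your last paragraph.

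Relatedly, your attribution of the extra $\log d$ is off. In the paper it does not come from the $\sqrt{\log n/d}$ norm fluctuation; it comes from truncating the infinite series at level $\iota_0=C(\log d+\log n)$ so that $ne^{-\Omega(\iota_0)}$ is negligible. Carrying $\iota_0$ terms in the high-degree sum then produces $\iota_0\cdot n/d^{\iota+1}\asymp (n\log d)/d^{\iota+1}$. Finally, a small caution on invoking $\sup_x k(x,x)\le M$: for the kernel in \eqref{eq:cos} one has $k(x,x)=\|x\|^2 g(1)\asymp d$, so the boundedness used in Theorem~\ref{thm:bias} should be read after the natural $1/d$ rescaling (which is harmless since $\widehat f$ is invariant under $k\mapsto ck$).
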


The connection between Neural Tangent Kernel (NTK) \citep{jacot2018neural} and wide neural networks is by now well-known. It follows from \citep{du2018gradient} that sufficiently wide randomly initialized (with appropriate scaling) neural networks converge to the minimum-norm interpolating solution with respect to NTK, under appropriate assumptions. This connection allows us to leverage Corollary~\ref{coro:ntk} for establishing estimation and generalization guarantees for these models.

For completeness, we show that NTK for infinitely-wide neural networks is indeed of the form \eqref{eq:cos}.
Here we consider a one-hidden-layer neural network defined as follows:
\begin{equation*}
	f(x;W,a):=\frac{1}{\sqrt{m}}\sum_{j=1}^m a_j\sigma(w_j^\top \tilde x),
\end{equation*}
where the input $x$ is a $d$-dimensional vector $W=(w_1,\cdots,w_m)$ is a $(d+1)\times m$ matrix and $a=(a_1,\cdots,a_m)$ is a $m$-dimensional vector and $\tilde x=(x^\top,\sqrt{d})^\top$.
%
The NTK $h^m$ is defined by
 \begin{equation*}
 	h^m(x,x'):=\frac{1}{m}\left(\sum\limits_{j=1}^m\sigma(w_j^\top \tilde x)\sigma(w_j^\top \tilde x')+x^\top x'\sum\limits_{j=1}^ma_i^2\sigma'(w_j^\top \tilde x)\sigma'(w_j^\top \tilde x')\right) \enspace.
 \end{equation*}
 Assume that the parameters are initialized according to i.i.d. $\mathcal{N}(0,1)$. Then the above kernel converges pointwise to the following kernel as $m\rightarrow\infty$:
\begin{equation*}
	\begin{split}
		h^\infty(x,x'):&=\mathbb{E}_{w\sim \mathcal{N}(0,I_{d+1})}\left(\sigma(w^\top \tilde x)\sigma(w^\top \tilde x')+\tilde x^\top \tilde x'\sigma'(w^\top \tilde x)\sigma'(w^\top \tilde x')\right)\\
		&=\frac{1}{4 \pi}\|\tilde x\|\|\tilde x'\|\left((\pi-  \angle (\tilde x, \tilde x'))\cos\angle (\tilde x, \tilde x') +\sin \angle (\tilde x, \tilde x')\right)+\frac{1}{2 \pi}\tilde x^\top \tilde x' \left(\pi-  \angle (\tilde x, \tilde x')\right)\\
		&=\frac{1}{4 \pi}\|\tilde x\|\|\tilde x'\| U\left( \cos\angle (\tilde x, \tilde x') \right)
	\end{split}
\end{equation*}
and $U$ takes the following analytic form
\begin{equation*}
	\begin{split}
		U(t)&:= 3t(\pi-\arccos(t))+\sqrt{1-t^2}\\
		&=1+\frac{3 \pi t}{2}+\sum\limits_{i=0}^\infty \left(\frac{3(\frac{1}{2})_{i}}{(1+2i)i!}- \frac{1}{2}\frac{(\frac{1}{2})_{i}}{(i+1)!}\right)t^{2i+2}\\
		&=1+\frac{3 \pi t}{2}+\sum\limits_{i=0}^\infty\frac{(4i+5)(\frac{1}{2})_{i}t^{2i+2}}{2(2i+1)(i+1)i!}\\
	\end{split}
\end{equation*}
where $(\frac{1}{2})_i=\frac{1}{2}\times \frac{3}{2}\times \cdots\times (\frac{1}{2}+i-1)$ is the Pochhammer symbol. Now we have verified that the Neural Tangent Kernel $h^{\infty}$ is of the form \eqref{eq:cos}.

In fact, it is not difficult to prove that for multilayer fully connectedly neural network the NTK is also of this form with all positive Taylor coefficients if seen as a function of $\cos\angle (\tilde x, \tilde x')$.

\section{Proof of Theorem~\ref{thm:variance}}
\label{sec:variance_proof}

In this section we prove Theorem~\ref{thm:variance}, as it sheds light on the emergence of the multiple descent phenomenon. We need only prove (i) because (ii) shall follow easily from the proof of (i).
We will first show that with high probability, $\mathbf{K}$ is invertible, and $n\mathbf{K}\succ c \mathbf{I}_{n}$
for some constant $c > 0$.
To show this, it suffices to prove
\begin{equation*}
	n\mathbf{K}^{[\iota']}\succ  c \mathbf{I}_{n}
\end{equation*}
Write $\mathbf{K}^{[\iota']}$ as
\begin{equation*}
	n\mathbf{K}^{[\iota']}=A+B
\end{equation*}
where $A$ is the diagonal terms and $B$ is the non-diagonal terms.
With probability at least $1 - n^{-C}$, we shall have
\begin{equation}
	\| A \|_{\rm op}  = \Omega(1).
\end{equation}
and by H\"{o}lder's inequality on matrix induced norm,
\begin{align}
	&\| B\|_{\rm op} = \| B\|_{\ell_2 \rightarrow \ell_2}  \leq \sqrt{\| B\|_{\ell_1 \rightarrow \ell_1} \| B\|_{\ell_\infty \rightarrow \ell_\infty}} \\
	&= \|B\|_{\ell_1 \rightarrow \ell_1}\le O\left(n \times\left(\frac{\sqrt{\log n}}{\sqrt{d}}\right)^{\iota'}\right)\le O\left(\frac{1}{\sqrt{d}}\right) \enspace.
\end{align}
Hence
\begin{align}
	n \mathbf{K} \succeq n \mathbf{K}^{[\iota']} \succeq (\| A \|_{\rm op} - \| B\|_{\rm op}) \mathbf{I}_{n} \succeq c \mathbf{I}_{n} \enspace.
\end{align}

Now back to the proof of (i).
Recall the normalized kernel matrix $\mathbf{K} = k(\bm{X}, \bm{X})/n$.
\begin{align}
	\label{eq:put-together}
		&\mathbb{E}_{x}\|k(\bm{X}, \bm{X})^{-1}k(\bm{X},x)\|^2\\
		&\lesssim \sum_{i = 0}^{\iota}\mathbb{E}_{x}\|\mathbf{K}^{-1}\frac{1}{n}(\bm{X}x)^{i}/d^{i}\|^2+\mathbb{E}_{x}\|\mathbf{K}^{-1}\frac{1}{n}\sum_{i=\iota+1}^{\infty}(\bm{X}x)^{i}/d^{i}\|^2\\
		&\lesssim\frac{1}{n^2}\sum_{i=0}^{\iota}\mathbb{E}_{x}\|\mathbf{K}^{-1}(\bm{X}x)^i/d^i\|^2+\| (n\mathbf{K})^{-1} \|_{\rm op}^2 \cdot \mathbb{E}_{x}\|\sum_{i=\iota+1}^{\infty}(\bm{X}x)^{i}/d^{i}\|^2\\
		&\lesssim \frac{1}{n^2}\sum_{i=0}^{\iota}\mathbb{E}_{x} \left[ \| ( \mathbf{K}^{[\le i]} )^{+} \|_{\rm op}^2  \cdot \|(\bm{X}x)^{i}/d^{i}\|^2 \right]+\frac{n}{d^{\iota+1}} \label{eq:4thline}
\end{align}
Now, by Proposition~\ref{prop:key}, the above quantity is at most
\begin{align*}
		&\lesssim \frac{1}{n^2}\sum_{i=0}^{\iota}\mathbb{E}_{x}\left[ d^{2i} \cdot \|(\bm{X}x)^{i}/d^{i}\|^2 \right]+\frac{n}{d^{\iota+1}} \\
		&\lesssim \frac{1}{n^2}\sum_{i=0}^{\iota}\mathbb{E}_{x}\|(\bm{X}x)^{i}\|^2+\frac{n}{d^{\iota+1}}
		\lesssim \frac{1}{n^2}\sum_{i=0}^{\iota} n d^i +\frac{n}{d^{\iota+1}} 
		\le \frac{d^{\iota}}{n}+\frac{n}{d^{\iota+1}} \enspace.
\end{align*}
The line \eqref{eq:4thline} requires some explanation. First observe that $(\bm{X}x)^i$ lies in the span of $\mathbf{K}^{[\le i]}$. Write $
	\mathbf{K} = \mathbf{K}^{[\le i]} + \mathbf{K}^{[> i]} \succeq \mathbf{K}^{[\le i]}
$,
we know that when restricted to the column space $\Phi$ of $\mathbf{K}^{[\le i]}$ (via projection operator $\Pi_{\Phi}$), the operator $\mathbf{K}$ satisfy
\begin{align*}
	\mathbf{K}|_{\Phi} =  \mathbf{K}^{[\le i]} + \Pi_{\Phi} \mathbf{K}^{[> i]} \Pi_{\Phi} \succeq \mathbf{K}^{[\le i]}
\end{align*}
Therefore for $v = (\bm{X}x)^i$ in the span of $\Phi$, we have
\begin{align*}
	\| (\mathbf{K}|_{\Phi})^{-1} v\|^2 & \leq  \| ( \mathbf{K}^{[\le i]} )^{+} \|_{\rm op} \cdot \| (\mathbf{K}|_{\Phi})^{-1/2} v \|^2 \\
	&\leq \| ( \mathbf{K}^{[\le i]} )^{+} \|_{\rm op} \cdot v^\top (\mathbf{K}^{[\le i]} )^{+} v  \leq \| ( \mathbf{K}^{[\le i]} )^{+} \|_{\rm op}^2  \cdot \| v \|^2  \enspace.
\end{align*}
Note that in above derivation, we use that by concentration $\mathbb{E}_{x}\left[ (x^\top x_j)^{2\iota}\right]  \precsim d^{\iota}$ in high probability over $\bm{X}$.
This can be seen because conditioning on $\bm{X}$, $x^\top x_j$ is sub-Gaussian with parameter $\|x_j\|_2$, and
\begin{align*}
	\mathbb{E}_{x}\left[ (x^\top x_j)^{2\iota}\right]  \precsim \|x_j\|_2^{2\iota} \precsim d^{\iota}, \quad \text{w.p. $1 - \exp(d - \log n)$}\enspace.
\end{align*}
This concludes the proof of Theorem~\ref{thm:variance}.

\section{Proof of Theorem~\ref{thm:bias}}
\label{sec:bias_proof}
	In this section we provide a proof of Theorem~\ref{thm:bias}. Define the following ``surrogate'' function for analyzing the bias term $\tilde{f}_n(x) := \frac{1}{n} \sum_{i=1}^n k(x, x_i) \rho_*(x_i)$.
		We start with splitting 
		\begin{align*}
			&\| k(x, \bm{X}) k(\bm{X}, \bm{X})^{-1} f_*(\bm{X}) - f_*(x) \|_{\mathcal{P}_X}^2 \\
			&\precsim \left\| k(x, \bm{X}) k(\bm{X}, \bm{X})^{-1} \left[f_*(\bm{X}) - \tilde{f}_n(\bm{X})\right] \right\|_{\mathcal{P}_X}^2 + \| k(x, \bm{X}) k(\bm{X}, \bm{X})^{-1} \tilde{f}_n(\bm{X})- f_*(x) \|_{\mathcal{P}_X}^2. 
		\end{align*}
		The first term is equal to $\int \left\langle k(\bm{X}, \bm{X})^{-1} k(\bm{X}, x), f_*(\bm{X}) - \tilde{f}_n(\bm{X}) \right\rangle^2  \mathcal{P}_X (dx) $, which is at most
		\begin{align*}
			&\int \left\| k(\bm{X}, \bm{X})^{-1} k(\bm{X}, x) \right\|^2 \mathcal{P}_X (dx) \cdot \| f_*(\bm{X}) - \tilde{f}_n(\bm{X}) \|^2  \\
			&= \| f_*(\bm{X}) - \tilde{f}_n(\bm{X}) \|^2 \cdot \mathbb{E}_{x \sim \mathcal{P}_X} \left\| k(\bm{X}, \bm{X})^{-1} k(\bm{X}, x) \right\|^2
		\end{align*}
		by the Cauchy-Schwartz inequality.
		By Proposition~\ref{prop:leave-one-out} in the Appendix,
		\begin{align*}
			\mathbb{E}_{\bm{X}} \| f_*(\bm{X}) - \tilde{f}_n(\bm{X}) \|^2 \precsim 1.
		\end{align*}
		For the second term, defining a vector $\tilde{V} = [\frac{\rho_*(x_1)}{n}, \ldots,  \frac{\rho_*(x_n)}{n}] \in \mathbb{R}^n$, we have $\tilde{f}_n(\bm{X}) = k(\bm{X}, \bm{X}) \tilde{V}.$
		Then
		\begin{align*}
			\| k(x, \bm{X}) k(\bm{X}, \bm{X})^{-1} \tilde{f}_n(\bm{X})- f_*(x) \|_{\mathcal{P}_X}^2 
			&=\| k(x, \bm{X}) k(\bm{X}, \bm{X})^{-1} k(\bm{X}, \bm{X}) \tilde{V}- f_*(x) \|_{\mathcal{P}_X}^2 \\
			& = \| \tilde{f}_n(x) - f_*(x) \|_{\mathcal{P}_X}^2.
		\end{align*}
		By Proposition~\ref{prop:var} in the Appendix, it holds that $\mathbb{E}_{\bm{X}} \| f_*(x) - \tilde{f}_n(x) \|_{\mathcal{P}_X}^2 \precsim \frac{1}{n}$.

\section{Discussion}

We showed that minimum norm interpolants in RKHS, under the assumptions employed in this paper, have risk that vanishes in $n$ for a wide range of scalings $d\asymp n^\alpha$, $\alpha\in(0,1)$. Notably, the places where our upper bounds become vacuous are fractions $\alpha=1/i$ for integer $i$. The phenomenon of non-monotonicity with peaks at these locations is supported by empirical evidence, and generalizes the double-descent behavior in linear regression and other models. 

A more precise description of the risk curve is an interesting research direction. In addition, it would be interesting to understand the effect of regularization on the peaks. In terms of assumptions, the i.i.d. assumption on the coordinates can certainly be lifted, and we believe similar results hold under a rotation of vectors with independent or weakly-dependent coordinates. Some degree of independence, however, is needed to capture the scaling with $d$. 

Finally, we mention that the difficulty of analyzing min-norm interpolants is greatly reduced in the \emph{noiseless} case when $y_i=f_*(x_i)$. Indeed, in this case the variance term is zero. Moreover, one can appeal to known results on lower isometry (for instance, Lemmas 8 and 9 in \citep{Rakhlin2017}) to establish that, up to polylogarithmic factors, 
$$\mathbb{E}\norm{\algo-f_*}_{L_2(\mathcal{P}_X)}^2\lesssim \mathcal{R}_n(\mathcal{F})^2, $$
the squared Rademacher averages of $\mathcal{F}$, whenever $\algo\in\mathcal{F}$. Since $\algo$ is the min-norm interpolant, we can take $\mathcal{F}$ to be the ball in $\cH$ of radius $\norm{f_*}_{K}$, yielding a consistency result $$\mathbb{E}\norm{\algo-f_*}^2_{L_2(\mathcal{P}_X)}\lesssim \frac{\norm{f_*}^2_K}{n}.$$ This can be further tightened to an upper bound in terms of $\norm{\algo}^2_K$, in high probability. In contrast, the norm $\norm{\algo}_K$ is not easily controlled in the noisy case.

\bibliography{refs}

\begin{thebibliography}{17}
\providecommand{\natexlab}[1]{#1}
\providecommand{\url}[1]{\texttt{#1}}
\expandafter\ifx\csname urlstyle\endcsname\relax
  \providecommand{\doi}[1]{doi: #1}\else
  \providecommand{\doi}{doi: \begingroup \urlstyle{rm}\Url}\fi

\bibitem[Bartlett et~al.(2019)Bartlett, Long, Lugosi, and
  Tsigler]{bartlett2019benign}
Peter~L Bartlett, Philip~M Long, G{\'a}bor Lugosi, and Alexander Tsigler.
\newblock Benign overfitting in linear regression.
\newblock \emph{arXiv preprint arXiv:1906.11300}, 2019.

\bibitem[Belkin et~al.(2018{\natexlab{a}})Belkin, Hsu, Ma, and
  Mandal]{belkin2018reconciling}
Mikhail Belkin, Daniel Hsu, Siyuan Ma, and Soumik Mandal.
\newblock Reconciling modern machine learning and the bias-variance trade-off.
\newblock \emph{arXiv preprint arXiv:1812.11118}, 2018{\natexlab{a}}.

\bibitem[Belkin et~al.(2018{\natexlab{b}})Belkin, Hsu, and
  Mitra]{belkin2018overfitting}
Mikhail Belkin, Daniel Hsu, and Partha Mitra.
\newblock Overfitting or perfect fitting? risk bounds for classification and
  regression rules that interpolate.
\newblock \emph{arXiv preprint arXiv:1806.05161}, 2018{\natexlab{b}}.

\bibitem[Belkin et~al.(2018{\natexlab{c}})Belkin, Ma, and
  Mandal]{belkin2018understand}
Mikhail Belkin, Siyuan Ma, and Soumik Mandal.
\newblock To understand deep learning we need to understand kernel learning.
\newblock \emph{arXiv preprint arXiv:1802.01396}, 2018{\natexlab{c}}.

\bibitem[Belkin et~al.(2018{\natexlab{d}})Belkin, Rakhlin, and
  Tsybakov]{belkin2018does}
Mikhail Belkin, Alexander Rakhlin, and Alexandre~B Tsybakov.
\newblock Does data interpolation contradict statistical optimality?
\newblock \emph{arXiv preprint arXiv:1806.09471}, 2018{\natexlab{d}}.

\bibitem[Caponnetto and De~Vito(2007)]{caponnetto2007optimal}
Andrea Caponnetto and Ernesto De~Vito.
\newblock Optimal rates for the regularized least-squares algorithm.
\newblock \emph{Foundations of Computational Mathematics}, 7\penalty0
  (3):\penalty0 331--368, 2007.

\bibitem[Du et~al.(2018)Du, Zhai, Poczos, and Singh]{du2018gradient}
Simon~S Du, Xiyu Zhai, Barnabas Poczos, and Aarti Singh.
\newblock Gradient descent provably optimizes over-parameterized neural
  networks.
\newblock \emph{arXiv preprint arXiv:1810.02054}, 2018.

\bibitem[Feldman(2019)]{feldman2019does}
Vitaly Feldman.
\newblock Does learning require memorization? a short tale about a long tail.
\newblock \emph{arXiv preprint arXiv:1906.05271}, 2019.

\bibitem[Ghorbani et~al.(2019)Ghorbani, Mei, Misiakiewicz, and
  Montanari]{ghorbani2019linearized}
Behrooz Ghorbani, Song Mei, Theodor Misiakiewicz, and Andrea Montanari.
\newblock Linearized two-layers neural networks in high dimension.
\newblock \emph{arXiv preprint arXiv:1904.12191}, 2019.

\bibitem[Hastie et~al.(2019)Hastie, Montanari, Rosset, and
  Tibshirani]{hastie2019surprises}
Trevor Hastie, Andrea Montanari, Saharon Rosset, and Ryan~J Tibshirani.
\newblock Surprises in high-dimensional ridgeless least squares interpolation.
\newblock \emph{arXiv preprint arXiv:1903.08560}, 2019.

\bibitem[Jacot et~al.(2018)Jacot, Gabriel, and Hongler]{jacot2018neural}
Arthur Jacot, Franck Gabriel, and Cl{\'e}ment Hongler.
\newblock Neural tangent kernel: Convergence and generalization in neural
  networks.
\newblock In \emph{Advances in neural information processing systems}, pages
  8571--8580, 2018.

\bibitem[Koltchinskii and Mendelson(2015)]{koltchinskii2015bounding}
Vladimir Koltchinskii and Shahar Mendelson.
\newblock Bounding the smallest singular value of a random matrix without
  concentration.
\newblock \emph{International Mathematics Research Notices}, 2015\penalty0
  (23):\penalty0 12991--13008, 2015.

\bibitem[Liang and Rakhlin(2018)]{liang2018just}
Tengyuan Liang and Alexander Rakhlin.
\newblock Just interpolate: Kernel" ridgeless" regression can generalize.
\newblock \emph{The Annals of Statistics, to appear}, 2018.

\bibitem[Mei and Montanari(2019)]{mei2019generalization}
Song Mei and Andrea Montanari.
\newblock The generalization error of random features regression: Precise
  asymptotics and double descent curve.
\newblock \emph{arXiv preprint arXiv:1908.05355}, 2019.

\bibitem[Mendelson(2014)]{mendelson2014learning}
Shahar Mendelson.
\newblock Learning without concentration.
\newblock In \emph{Conference on Learning Theory}, pages 25--39, 2014.

\bibitem[Rakhlin and Zhai(2018)]{rakhlin2018consistency}
Alexander Rakhlin and Xiyu Zhai.
\newblock Consistency of interpolation with laplace kernels is a
  high-dimensional phenomenon.
\newblock \emph{arXiv preprint arXiv:1812.11167}, 2018.

\bibitem[Rakhlin et~al.(2017)Rakhlin, Sridharan, and Tsybakov]{Rakhlin2017}
Alexander Rakhlin, Karthik Sridharan, and Alexandre~B. Tsybakov.
\newblock Empirical entropy, minimax regret and minimax risk.
\newblock \emph{Bernoulli}, 23\penalty0 (2):\penalty0 789--824, May 2017.
\newblock \doi{10.3150/14-bej679}.
\newblock URL \url{https://doi.org/10.3150/14-bej679}.

\end{thebibliography}

\section{Proof of Main Proposition}
\label{sec:main_proof}

The proof aims to establish the \textit{restricted lower isometry} behavior for the empirical kernel $\mathbf{K}$ when restricting to the eigen-space of the population covariance operator with rank $\binom{i+d}{i}$ (sorted according to the eigenvalues). We show a lower bound for the restricted lower isometry, as multiplicatively equivalent to the population eigenvalues. The approach proceeds along the lines of \citep{koltchinskii2015bounding, mendelson2014learning}. One technical contribution is establishing the ``small-ball'' property for the polynomial basis of the kernel. 

\subsection{Preparation}

We use the notation $``r_i;"$ to represent a sequence of indices $r_1\cdots r_d$. For example, we can use this to abbreviate a monomial with order $r_1+\ldots+r_d$: for monomial $p$ (recall that $x$ is a $d$-dimensional vector),
	\begin{equation}
		p_{r_i;}(x):=p_{r_1\cdots r_d}(x)=\prod_{i=1}^d (x[i])^{r_i}
	\end{equation}
where $x[i]$ denotes the $i$-th coordinate of $x$. This notation is also used in tensors $T$, for example:
\begin{equation}
	T_{r_i;}:=T_{r_1\cdots r_d} .
\end{equation}

First, we fix an index $\iota\le \iota_0$. After we prove it for $\iota$, the conclusion shall follow easily from a union bound over $\iota=1,\cdots,\iota_0$. 

Consider the Taylor expansion, expressed with the multi-index $r_1,\cdots,r_d$ 
\begin{equation}
	\begin{split}
		n\mathbf{K}_{ij}^{[\le \iota]}&=\sum_{r_1,\cdots,r_d\ge 0}^{\iota} c_{r_1\cdots r_d}\alpha_{r_1+\cdots+r_d} p_{r_1\cdots r_d}(x_i)p_{r_1\cdots r_d}(x_j)/d^{r_1+\cdots+r_d}
	\end{split}
\end{equation}
where
\begin{equation}
	c_{r_1\cdots r_d}=\frac{(r_1+\cdots+r_d)!}{r_1!\cdots r_d!},~~ p_{r_1\cdots r_d}(x_i)=(x_{i}[1])^{r_1}\cdots (x_{i}[d])^{r_d} \enspace.
\end{equation}
Fix an ordering of all $(r_1,\cdots,r_d)$ such that $r_i\ge 0,\sum r_i\le \iota$, and let 
$$(r_1\cdots r_d)_{\iota}\in \left\{1,2,\cdots,\binom{\iota+d}{\iota}\right\}$$
denote the index of $(r_1,\cdots,r_d)$ in the ordering.
Define the $n\times \binom{\iota+d}{\iota}$ matrix $\Phi$ as
\begin{equation}
	\Phi_{i,(r_1\cdots r_d)_{\iota}}=\sqrt{c_{r_1\cdots r_d}\alpha_{r_1+\cdots+r_d}}p_{r_1\cdots r_d}(x_i)/d^{(r_1+\cdots+r_n)/2}.
\end{equation}
Then it is not hard to see
\begin{equation}
	\mathbf{K}^{[\le \iota]}=\frac{1}{n}\Phi \Phi^\top \enspace,
\end{equation}
which has the same nonzero spectrum as the covariance operator
\begin{equation}
	\Theta=\frac{1}{n}\Phi^\top \Phi.
\end{equation}
We know
\begin{equation}
	\Theta_{(r_1\cdots r_d)_{\iota},(r_1'\cdots r_d')_{\iota}}=\frac{1}{n}\sum_{i=1}^n 
	\sqrt{c_{r_1\cdots r_d}\alpha_{r_1+\cdots+r_d}}\sqrt{c_{r_1'\cdots r_d'}a_{r_1'+\cdots+r_d'}}\frac{p_{r_1\cdots r_d}(x_i)p_{r_1'\cdots r_d'}(x_i)}{d^{(r_1+\cdots+r_n+r_1'+\cdots+r_n')/2}} \enspace.
\end{equation}

It would be hard to work directly with $\Phi$ to analyze the eigenvalues of the random matrix because of complex correlation structure in the entries. Instead, we define another matrix $\Psi$ with the following properties:
\begin{enumerate}
	\item $\Psi^\top \Psi$ is easier to analyze from a probabilistic point of view;
	\item there is a linear transformation $\Lambda$ with bounded operator norm $\|\Lambda\|_{\ell_2 \rightarrow \ell_2}, \|\Lambda^{-1}\|_{\ell_2 \rightarrow \ell_2}$ such that
	\begin{equation}
		\Phi=\Psi \Lambda .
	\end{equation}
\end{enumerate}
With such a $\Lambda$, we have
	\begin{equation}
		\Theta=\Lambda^\top \Psi^\top \Psi \Lambda.
	\end{equation}
Such $\Lambda$ can be obtained through the Gram-Schimdt process on the polynomial basis, which we will elaborate on next.

\subsection{Gram-Schimdt Process} 
We now proceed in the following steps:

\paragraph{Step 1. Define $\Psi$}

	Given a distribution $\mathcal{P}$ over $\mathbb{R}$, define $q_0,q_1,\cdots,q_k,\cdots,$ to be the sequence of polynomials obtained by the Gram-Schmidt process on the basis $\{1,t,\cdots,t^k,\cdots\}$ w.r.t. the inner product of space $L^2(\mathcal{P})$. Define the $n\times \binom{\iota+d}{\iota}$ matrix $\Psi$ as
\begin{equation}
	\Psi_{i,(r_1\cdots r_d)_{\iota}}=\sqrt{c_{r_1\cdots r_d}\alpha_{r_1+\cdots+r_d}}q_{r_1\cdots r_d}(x_i)/d^{(r_1+\cdots+r_d)/2} \enspace.
\end{equation}
Here $q_{r_1\cdots r_d}$ will be defined in \eqref{eq:lemma1}.

To be concrete, one can see that
	\begin{equation}
		q_0=1,q_1=(x-m_1)/\sqrt{(m_2-m_1^2)},\cdots,
	\end{equation}
	where $m_i$ is the $i$th moment of $\mathcal{P}$,
	and for $i<j$
	\begin{equation}
		\mathbb{E}_{\mathbf{x} \sim \mathcal{P}}\left[ \mathbf{x}^i q_j(\mathbf{x}) \right]=0.
	\end{equation}

The following lemmas on properties of the polynomial basis due to Gram-Schmidt process will be useful.
\begin{lemma}
	Suppose that $f(x)=\sum_{r_i;} u_{r_i;} q_{r_i;}(x)$, then
	\begin{equation}
		\label{eq:lemma1}
		u_{r_i;}=\mathbb{E}_{X \sim \mathcal{P}^{\otimes d}} \left[ f(X) q_{r_i;}(X) \right],~\text{where}~ q_{r_i;}(x) := \prod_{i=1}^d q_{r_i}(x[i]).
	\end{equation}
\end{lemma}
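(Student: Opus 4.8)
The plan is to recognize the lemma as nothing more than the statement that the tensorized polynomials $\{q_{r_i;}\}$ form an \emph{orthonormal} system in $L^2(\mathcal{P}^{\otimes d})$; once that is in hand, the coefficient formula \eqref{eq:lemma1} is the usual ``read off the Fourier coefficients'' identity for an orthonormal system. So the proof is a two-line reduction after one genuinely one-dimensional fact and one use of independence.

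First I would record the univariate fact. By construction, Gram--Schmidt applied to $\{1,t,t^2,\ldots\}$ with respect to $\langle\cdot,\cdot\rangle_{L^2(\mathcal{P})}$ yields $q_0,q_1,q_2,\ldots$ with $\mathbb{E}_{z\sim\mathcal{P}}[q_j(z)q_k(z)]=\mathbf{I}\{j=k\}$. Here part (2) of Assumption~\ref{asmp:dist-coordinate} is what makes the procedure well defined: a nonzero polynomial of degree $k$ has at most $k$ roots, so $\{z:p(z)=0\}$ is a finite set, hence has $\mathcal{P}$-probability strictly less than $1$, i.e. no nonzero polynomial vanishes $\mathcal{P}$-a.s.; this is exactly the non-degeneracy that keeps the Gram--Schmidt normalizers strictly positive. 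Part (1) of the assumption (with $\nu$ large enough relative to the degrees involved, which is why we only ever work up to degree $\iota_0$) guarantees the moments entering these computations are finite.

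Next I would tensorize: since $\mathcal{P}_X=\mathcal{P}^{\otimes d}$ has independent coordinates and $q_{r_i;}(x)=\prod_{j=1}^d q_{r_j}(x[j])$ factorizes over coordinates, for any multi-indices $r_i;$ and $r_i';$,
\[
	\mathbb{E}_{X\sim\mathcal{P}^{\otimes d}}\!\left[ q_{r_i;}(X)\, q_{r_i';}(X) \right]
	= \prod_{j=1}^d \mathbb{E}_{z\sim\mathcal{P}}\!\left[ q_{r_j}(z)\, q_{r_j'}(z) \right]
	= \prod_{j=1}^d \mathbf{I}\{r_j=r_j'\}
	= \mathbf{I}\{\, r_i;=r_i';\,\}.
\]
Then, given $f(x)=\sum_{r_i;} u_{r_i;}\, q_{r_i;}(x)$ — a finite sum, since in every application $f$ is a polynomial of degree at most $\iota\le\iota_0$, so no interchange of limits is needed (and if one wanted the infinite-degree version, the sum converges in $L^2(\mathcal{P}^{\otimes d})$ and continuity of the inner product gives the same conclusion) — I multiply by $q_{r_i';}(X)$ and take expectation; by linearity and the orthonormality just established every term with $r_i;\neq r_i';$ drops, leaving $u_{r_i';}=\mathbb{E}_X[f(X)\,q_{r_i';}(X)]$, which is \eqref{eq:lemma1}.

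The argument is essentially bookkeeping, and there is no real obstacle: the only points requiring any care are that all relevant moments of $\mathcal{P}$ are finite (so every expectation above makes sense and the univariate basis exists), which is where the tail bound in Assumption~\ref{asmp:dist-coordinate} and the restriction to degrees at most $\iota_0$ are used, and the non-degeneracy observation ensuring the Gram--Schmidt step never divides by zero. The lemma's purpose is purely to set up the change of basis $\Phi=\Psi\Lambda$ exploited in the remainder of the proof of Proposition~\ref{prop:key}.
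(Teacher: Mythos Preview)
Your proof is correct and is exactly the natural argument. The paper itself states this lemma without proof, treating it (together with the two lemmas that follow) as an immediate consequence of the Gram--Schmidt construction and the product structure of $\mathcal{P}^{\otimes d}$; you have supplied precisely the details the paper omits.
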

\begin{lemma}
\label{lem2}
	$\mathbb{E}_{X \sim \mathcal{P}^{\otimes d}} \left[ q_{r_i;}(X)p(X) \right]=0$ if there exists one $i$ such that the degree of $p(x)$ as a polynomial of $x[i]$ is less than $r_i$. 
\end{lemma}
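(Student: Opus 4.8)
The plan is to reduce this to the one-dimensional orthogonality property of the Gram--Schmidt polynomials recorded just above the lemma, namely that $\mathbb{E}_{z\sim\mathcal{P}}[z^k q_j(z)]=0$ whenever $k<j$, together with the product structure of $\mathcal{P}^{\otimes d}$. Fix an index $i_0$ for which the degree of $p$ in the variable $x[i_0]$ is strictly less than $r_{i_0}$. Since $q_{r_i;}(x)=\prod_{i=1}^d q_{r_i}(x[i])$ and $\mathcal{P}_X=\mathcal{P}^{\otimes d}$, I would first apply Fubini's theorem to write
\begin{equation*}
	\mathbb{E}_{X\sim\mathcal{P}^{\otimes d}}\big[q_{r_i;}(X)\,p(X)\big] = \mathbb{E}\Big[\Big(\textstyle\prod_{j\ne i_0} q_{r_j}(X[j])\Big)\cdot \mathbb{E}_{X[i_0]\sim\mathcal{P}}\big[q_{r_{i_0}}(X[i_0])\,p(X)\big]\Big],
\end{equation*}
where the inner expectation is over $X[i_0]$ with all the other coordinates held fixed, and the outer expectation is over the remaining $d-1$ coordinates. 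The interchange of integrals is legitimate because the integrands are polynomials and $\mathcal{P}$ has enough finite moments in the regime under consideration.

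Next I would observe that, for each fixed value of the coordinates $X[j]$, $j\ne i_0$, the map $x[i_0]\mapsto p(X)$ is a univariate polynomial of degree at most $r_{i_0}-1$ by hypothesis, so it may be written as $\sum_{k=0}^{r_{i_0}-1} a_k\,(X[i_0])^k$ with coefficients $a_k$ that are themselves polynomials in the other coordinates. Linearity of the inner expectation then gives $\mathbb{E}_{X[i_0]\sim\mathcal{P}}[q_{r_{i_0}}(X[i_0])\,p(X)] = \sum_{k=0}^{r_{i_0}-1} a_k\,\mathbb{E}_{z\sim\mathcal{P}}[z^k q_{r_{i_0}}(z)] = 0$, since every term vanishes by the one-dimensional property $k<r_{i_0}$. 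Substituting this identically-zero inner expectation back into the display above yields $\mathbb{E}[q_{r_i;}(X)\,p(X)]=0$.

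There is no serious obstacle here; the lemma is essentially the assertion that a product of univariate orthogonal polynomials is orthogonal, in $L^2(\mathcal{P}^{\otimes d})$, to any polynomial whose per-coordinate degree falls strictly below the index of one of the factors. The only points worth a sentence of justification are (i) the integrability needed to invoke Fubini, which follows from $p$ and $q_{r_i;}$ being polynomials and $\mathcal{P}$ having finitely many moments of the relevant orders, and (ii) the extension of the orthogonality $\mathbb{E}_{z\sim\mathcal{P}}[z^k q_j(z)]=0$ from monomials to arbitrary lower-degree polynomials, which is immediate by linearity. Both are routine, so the argument stays short.
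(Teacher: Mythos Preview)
Your argument is correct. The paper states this lemma without proof, treating it as an immediate consequence of the one-dimensional Gram--Schmidt orthogonality $\mathbb{E}_{\mathbf{x}\sim\mathcal{P}}[\mathbf{x}^i q_j(\mathbf{x})]=0$ for $i<j$ and the product structure of $\mathcal{P}^{\otimes d}$; your Fubini-plus-linearity reduction is exactly the natural way to unpack that implication.
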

\begin{lemma}[Triangle condition]
	$\mathbb{E}_{X \sim \mathcal{P}^{\otimes d}} \left[ q_{r_i;}(X)q_{r_i';}(X)q_{r_i'';}(X) \right]=0$ if there exists one $i$, such that $r_i> r_i'+r_i''$. 
\end{lemma}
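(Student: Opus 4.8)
The plan is to reduce the three-fold integral to a product of one-dimensional integrals and then invoke the Gram--Schmidt orthogonality in a single coordinate. Since $\mathcal{P}_X = \mathcal{P}^{\otimes d}$ is a product measure and each factor $q_{s_i;}(x)=\prod_{j=1}^d q_{s_j}(x[j])$ splits over coordinates, the expectation factorizes as
\begin{equation*}
	\mathbb{E}_{X \sim \mathcal{P}^{\otimes d}}\left[ q_{r_i;}(X)\, q_{r_i';}(X)\, q_{r_i'';}(X) \right] = \prod_{j=1}^d \mathbb{E}_{z \sim \mathcal{P}}\left[ q_{r_j}(z)\, q_{r_j'}(z)\, q_{r_j''}(z) \right].
\end{equation*}
So it suffices to show that the $i$-th factor on the right vanishes whenever $r_i > r_i' + r_i''$.

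For that coordinate, observe that $z \mapsto q_{r_i'}(z)\, q_{r_i''}(z)$ is a polynomial of degree $r_i' + r_i'' < r_i$, hence lies in the span of $\{1, z, \dots, z^{r_i - 1}\}$. By the defining property of the Gram--Schmidt basis recorded right after \eqref{eq:lemma1} (namely $\mathbb{E}_{z\sim\mathcal{P}}[z^m q_{r_i}(z)] = 0$ for all $m < r_i$), the polynomial $q_{r_i}$ is $L^2(\mathcal{P})$-orthogonal to every polynomial of degree strictly less than $r_i$. Applying this to $q_{r_i'} q_{r_i''}$ gives $\mathbb{E}_{z\sim\mathcal{P}}[q_{r_i}(z)\, q_{r_i'}(z)\, q_{r_i''}(z)] = 0$, and therefore the whole product vanishes. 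Equivalently, one may simply invoke Lemma~\ref{lem2} with the test polynomial $p(X) = q_{r_i';}(X)\, q_{r_i'';}(X)$, whose degree in the variable $x[i]$ is at most $r_i' + r_i'' < r_i$, so the expectation is zero.

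There is no substantial obstacle here; the only points requiring a line of justification are the factorization over the product measure (immediate from independence of the coordinates and the product form of $q_{s_i;}$) and the bookkeeping that $\deg(q_{r_i'} q_{r_i''}) = r_i' + r_i''$, so that it is annihilated by $q_{r_i}$ under $\mathbb{E}_{\mathcal{P}}$. In effect this lemma is Lemma~\ref{lem2} specialized to $p$ being a product of two basis elements, and it will be used later to restrict which triple products of the $q$'s can be nonzero when bounding the change-of-basis matrix $\Lambda$.
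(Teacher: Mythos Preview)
Your proof is correct and follows the natural approach; the paper states this lemma without proof, and your argument---factorizing over coordinates by independence and then applying the one-dimensional Gram--Schmidt orthogonality (equivalently, invoking Lemma~\ref{lem2} with $p=q_{r_i';}q_{r_i'';}$)---is exactly the intended justification.
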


\paragraph{Step 2. Existence of $\Lambda$.}

\begin{lemma}
\label{piano}
	There is a $\binom{\iota+d}{\iota}\times \binom{\iota+d}{\iota}$ invertible matrix $\Lambda$ such that
	\begin{equation}
		\Phi= \Psi \Lambda.
	\end{equation}
\end{lemma}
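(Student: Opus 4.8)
The plan is to show that $\Phi = \Psi\Lambda$ for an upper-triangular $\Lambda$, by exploiting the fact that each column of $\Phi$ (a scaled multivariate monomial $p_{r_i;}$) can be written in terms of the Gram-Schmidt polynomials $q_{s_i;}$ of equal or lower multidegree. First I would fix the ordering of multi-indices $(r_1\cdots r_d)_\iota$ and \emph{refine} it, if necessary, so that it is compatible with the coordinatewise partial order: if $s_i \le r_i$ for every $i$ (so $q_{s_i;}$ can appear in the expansion of $p_{r_i;}$), then $s$ precedes $r$ in the ordering. Such a linear extension of the partial order exists, and reordering only conjugates $\Theta$ by a permutation, so it costs nothing.

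The core observation is the one-dimensional expansion: since $\{q_0, q_1, \dots\}$ is obtained from $\{1, t, t^2, \dots\}$ by Gram-Schmidt, each monomial $t^{r}$ lies in $\mathrm{span}\{q_0, \dots, q_{r}\}$, i.e. $t^{r} = \sum_{s=0}^{r} \lambda^{(1)}_{r,s} q_s(t)$ with $\lambda^{(1)}_{r,r} \ne 0$ (in fact $\lambda^{(1)}_{r,r}$ equals the leading coefficient reciprocal of $q_r$). Taking tensor products over the $d$ coordinates, $p_{r_i;}(x) = \prod_i (x[i])^{r_i} = \sum_{s_i \le r_i \ \forall i} \big(\prod_i \lambda^{(1)}_{r_i, s_i}\big) q_{s_i;}(x)$, where the sum is over all multi-indices $s$ dominated by $r$ coordinatewise, and the diagonal term $s = r$ has nonzero coefficient $\prod_i \lambda^{(1)}_{r_i,r_i} \ne 0$. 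Multiplying through by the common scalar $\sqrt{c_{r_1\cdots r_d}\,\alpha_{r_1+\cdots+r_d}}/d^{(r_1+\cdots+r_d)/2}$ — which is the \emph{same} prefactor attached to column $r$ of both $\Phi$ and $\Psi$ — and evaluating at $x = x_1, \dots, x_n$, this says precisely that the $r$-th column of $\Phi$ is a linear combination of the columns of $\Psi$ indexed by $s \preceq r$. Collecting the coefficients into a matrix $\Lambda$ with $\Lambda_{s,r} = \big(\prod_i \lambda^{(1)}_{r_i,s_i}\big)\cdot \sqrt{c_{r_i;}\alpha}/d^{\cdots} \cdot \big(\sqrt{c_{s_i;}\alpha}/d^{\cdots}\big)^{-1}$ for $s \preceq r$ and $0$ otherwise, we get $\Phi = \Psi\Lambda$. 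With the refined ordering, $\Lambda$ is upper-triangular with nonzero diagonal entries, hence invertible.

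I expect the only real subtlety — and the thing worth stating carefully rather than the quantitative $\|\Lambda\|_{\mathrm{op}}, \|\Lambda^{-1}\|_{\mathrm{op}} \le C(\iota)$ bound, which is deferred to a later step — to be the bookkeeping that the scalar prefactors are identical in the two feature maps, so that the change of basis is genuinely the monomial-to-orthogonal-polynomial transform conjugated by a fixed diagonal scaling, and that this transform is well-defined column by column (each $q_s$ with $s \not\preceq r$ simply does not appear, by the one-dimensional Gram-Schmidt structure and Lemma \ref{lem2}). No probabilistic input is needed here: the identity $\Phi = \Psi\Lambda$ holds pointwise for every realization of $\bm X$, since it is just a pointwise polynomial identity evaluated at the sample points.
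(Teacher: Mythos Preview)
Your proposal is correct and follows essentially the same idea as the paper: both recognize that the scaled monomials and scaled orthogonal polynomials are two bases for the same space of degree-$\le\iota$ polynomials, so an invertible change-of-basis matrix exists. The paper's proof is more abstract (for each coefficient vector $a$ there is a unique $b$ making the two polynomial expansions equal, and $\Lambda:a\mapsto b$), while you give an explicit tensor-product construction via the one-dimensional Gram--Schmidt coefficients and deduce upper-triangularity with nonzero diagonal directly; this extra structure is exactly what the paper establishes separately in Claims~1--2 of the subsequent Lemma~\ref{l_bdd}, so your version front-loads some of that work.
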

\begin{proof}[Proof of Lemma \ref{piano}]
	For any $a\in \mathbb{R}^{\binom{\iota+d}{\iota}}$, there is a unique $b\in\mathbb{R}^{\binom{\iota+d}{\iota}}$ such that
	\begin{equation}
		\begin{split}
		\label{ninja}
			&\sum_{r_1+\cdots +r_d\le \iota,r_i\ge 0}a_{(r_1\cdots r_d)_{\iota}} \sqrt{c_{r_1\cdots r_d}\alpha_{r_1+\cdots+r_d}}p_{r_1\cdots r_d}/d^{(r_1+\cdots+r_d)/2}\\
			&=\sum_{r_1+\cdots +r_d\le \iota,r_i\ge 0}b_{(r_1\cdots r_d)_{\iota}}\sqrt{c_{r_1\cdots r_d}\alpha_{r_1+\cdots+r_d}}q_{r_1\cdots r_d}/d^{(r_1+\cdots+r_d)/2}.
		\end{split}
	\end{equation}
	As a result
	\begin{equation*}
		\Phi a=\Psi b.
	\end{equation*}
	Choose $\Lambda$ to be the linear mapping that maps $a$ to $b$
	\begin{equation*}
		\Phi a= \Psi \Lambda a.		
	\end{equation*}
	This holds for any $a$, so we have
	\begin{equation*}
		\Phi = \Psi \Lambda.
	\end{equation*}
\end{proof}

\paragraph{Step 3. Boundedness of $\Lambda$ and $\Lambda^{-1}$.}
	For a vector $v\in \mathbb{R}^{\binom{\iota+d}{\iota}}$, let $v_{\ge \iota'}$ be the vector such that
	\begin{equation}
		(v_{\ge \iota'})_{(r_1\cdots r_d)_{\iota}}=v_{(r_1\cdots r_d)_{\iota}}1_{\{r_1+\cdots+r_d\ge \iota'\}}.
	\end{equation}
	Define similarly for $v_{\iota'},v_{>\iota'}, \Lambda_{\ge \iota'}, \Lambda_{\ge \iota',<\iota'}$.
\begin{lemma}
\label{l_bdd}
	There is a constant $C(\iota)$ independent of $d$ such that
	\begin{equation}
		\|\Lambda\|_{\ell_2 \rightarrow \ell_2},\|\Lambda^{-1}\|_{\ell_2 \rightarrow \ell_2}\le C(\iota).
	\end{equation}
\end{lemma}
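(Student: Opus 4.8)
The plan is to show that $\Lambda$ is block-upper-triangular with respect to the grading of the index set $\{(r_1\cdots r_d): \sum r_i \le \iota\}$ by total degree $|r| = r_1+\cdots+r_d$, and that each diagonal block is the identity, so that $\Lambda = I + N$ with $N$ strictly block-upper-triangular and having only $\iota$ nonzero super-diagonal blocks. Once the off-diagonal blocks are shown to have operator norm bounded by a constant $C(\iota)$ (uniformly in $d$), the bounds $\|\Lambda\|_{\rm op} \le 1 + \iota\, C(\iota)$ and $\|\Lambda^{-1}\|_{\rm op} \le \sum_{k=0}^{\iota} \|N\|_{\rm op}^k$ follow immediately from the finite Neumann series, since $N^{\iota+1} = 0$. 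So the whole lemma reduces to controlling the entries of a single super-diagonal block.

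First I would make the block structure explicit. Expanding the monomial $p_{r_1\cdots r_d}(x) = \prod_i (x[i])^{r_i}$ in the orthogonal basis $\{q_k\}$ coordinatewise, $(x[i])^{r_i} = \sum_{s_i \le r_i} \gamma^{(i)}_{r_i, s_i} q_{s_i}(x[i])$ with $\gamma^{(i)}_{r_i,r_i}$ the leading coefficient, which is a constant depending only on $\mathcal{P}$ and $r_i \le \iota$ (not on $d$). Taking the product over $i$ shows $p_{r_1\cdots r_d} = \sum_{s \le r \text{ coordinatewise}} \big(\prod_i \gamma^{(i)}_{r_i,s_i}\big) q_{s_1\cdots s_d}$; in particular $|s| \le |r|$, so after the rescaling by $\sqrt{c_{r}\alpha_{|r|}}/d^{|r|/2}$ built into $\Phi$ and $\Psi$, the change-of-basis matrix $\Lambda$ sends degree-$|r|$ coordinates into degree-$\le|r|$ coordinates; i.e. $\Lambda$ is block-upper-triangular, and on the diagonal block (where $s = r$) it equals $\mathrm{diag}(\prod_i \gamma^{(i)}_{r_i,r_i})$. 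This diagonal is a bounded, bounded-away-from-zero matrix (the product has at most $\iota$ nontrivial factors, each a fixed constant), so without loss of generality we may absorb it and assume the diagonal is $I$.

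The main work, and the main obstacle, is bounding the operator norm of an off-diagonal block $\Lambda_{\iota', \iota''}$ with $\iota'' < \iota' \le \iota$ — this is where the dimension-free claim is delicate. The entry of $\Lambda$ from index $(r_i;)$ with $|r| = \iota'$ to index $(s_i;)$ with $|s| = \iota''$ carries an explicit factor $\sqrt{c_{s}\alpha_{\iota''}/(c_{r}\alpha_{\iota'})}\cdot d^{(\iota'-\iota'')/2} \cdot \prod_i \gamma^{(i)}_{r_i,s_i}$, which individually \emph{grows} with $d$; the point is that for a fixed $(r_i;)$ only a bounded number of $(s_i;)$ give a nonzero coefficient (those with $s \le r$ coordinatewise), and the combinatorial ratio $c_s/c_r$ together with the count of such terms conspires with the $d^{(\iota'-\iota'')/2}$ factor and the $\binom{d}{\cdot}$-type multiplicities on each side to stay bounded. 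Concretely I would bound $\|\Lambda_{\iota',\iota''}\|_{\rm op}$ by a Schur/Gershgorin-type estimate — $\|\Lambda_{\iota',\iota''}\|_{\rm op}^2 \le \big(\max_{(r_i;)}\sum_{(s_i;)} |\Lambda_{(r_i;),(s_i;)}|\big)\big(\max_{(s_i;)}\sum_{(r_i;)} |\Lambda_{(r_i;),(s_i;)}|\big)$ — and check that each row/column sum is $O_\iota(1)$ by counting, for fixed $r$, the multi-indices $s$ with $s\le r$ and $|s|=\iota''$ (there are $O_\iota(1)$ of them since $r$ has at most $\iota$ nonzero coordinates), and dually, for fixed $s$, the multi-indices $r \ge s$ with $|r| = \iota'$: here $r$ is obtained from $s$ by adding total degree $\iota'-\iota''$ spread over coordinates, and the number of choices that introduce a genuinely \emph{new} coordinate is $O(d^{\iota'-\iota''})$, exactly cancelling the $d^{-(\iota'-\iota'')/2}$ appearing (after the symmetric $d^{|\cdot|/2}$ normalization) once one also accounts for the $\sqrt{c}$ weights. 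Carrying out this counting carefully — tracking the normalization $d^{|r|/2}$ versus $d^{|s|/2}$, the multinomial weights $c_r, c_s$, and the leading coefficients $\gamma$ — is the routine-but-fiddly core; I expect it to occupy the bulk of the proof, and it is the only place the hypothesis $\iota \le \iota_0$ and the boundedness of the moments of $\mathcal{P}$ (to keep the $\gamma$'s bounded) are genuinely used.
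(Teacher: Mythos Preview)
Your high-level plan matches the paper's: $\Lambda$ is block upper-triangular with respect to total degree, the diagonal blocks are themselves diagonal with entries bounded above and away from zero (depending only on $\iota,\mathcal{P}$), and the strictly upper-triangular part is nilpotent of index at most $\iota+1$, so a finite Neumann series (equivalently, the paper's backward induction on degree) controls $\Lambda^{-1}$ once each off-diagonal block has $O_\iota(1)$ operator norm.

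The gap is in your off-diagonal computation. From $\Phi=\Psi\Lambda$ one obtains, for output index $s$ with $|s|=\iota''$ and input index $r$ with $|r|=\iota'>\iota''$,
\[
\Lambda_{(s_i;),(r_i;)}
=\sqrt{\frac{c_r\,\alpha_{\iota'}}{c_s\,\alpha_{\iota''}}}\;d^{-(\iota'-\iota'')/2}\;\mathbb{E}\!\left[q_{s_i;}(X)\,p_{r_i;}(X)\right],
\]
so the $d$-factor \emph{decays}; your expression has both the $c,\alpha$ ratio and the $d$-exponent inverted. With the correct scaling, your claim that each row and column $\ell_1$-sum is $O_\iota(1)$ is false: for fixed $s$ there are order $d^{\iota'-\iota''}$ admissible $r\ge s$, each entry of size $O(d^{-(\iota'-\iota'')/2})$, so the $\ell_1$-sum over $r$ is of order $d^{(\iota'-\iota'')/2}$ --- your ``exact cancellation'' is off by a square root, and the multinomial weights $c_r,c_s$ are $O_\iota(1)$ and contribute no $d$-dependent factor. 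The Schur test nevertheless \emph{does} succeed, because the other sum (fixed $r$, over the $O_\iota(1)$ admissible $s\le r$) is $O(d^{-(\iota'-\iota'')/2})$ and only the product of the two sums bounds $\|\cdot\|_{\rm op}^2$; you just need to state the argument correctly. The paper takes an essentially equivalent route: Cauchy--Schwarz on each output coordinate gives $b_s^2\le\big(\sum_{r\ge s}\Lambda_{s,r}^2\big)\big(\sum_{r\ge s}a_r^2\big)$, the first factor is $O(1)$ since order-$d^{\iota'-\iota''}$ entries each of squared size $O(d^{-(\iota'-\iota'')})$ sum to $O(1)$, and then summing over $s$ uses that each $a_r^2$ is hit by only $O_\iota(1)$ indices $s\le r$.
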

\begin{proof}
	[Proof of Lemma \ref{l_bdd}]
	We start with few claims about the Gram-Schmidt process and the structure of $\Lambda$.
	
	\noindent \textbf{Claim 1:} $\Lambda_{\ge \iota', <\iota'}=0$ for any $\iota'$. Alternatively, if $b=\Lambda a$ then
	\begin{equation}
		b_{\ge \iota'}=\Lambda_{\ge \iota'} a_{\ge \iota'}.
	\end{equation}
	\begin{proof}[Proof of Claim 1]

		We need only to show that if $a_{\ge \iota'}=0$, then $b_{\ge \iota'}=0$. 
		Observe that $a_{\ge \iota'}=0$ implies that the left hand of equation \eqref{ninja} is of degree less than $\iota'$. Since this is an equality, the right hand side must be of degree less than $\iota'$. Note that this implies that $b_{\ge \iota'}=0$.
	\end{proof}

	\noindent \textbf{Claim 2:} $\Lambda_{\iota',\iota'}$ is diagonal with
	\begin{equation}
		c\leq \lambda_{\min}(\Lambda_{\iota',\iota'})\ \leq \lambda_{\max}(\Lambda_{\iota',\iota'}) \leq C
	\end{equation}
	where $c,C$ depend only on $\iota', \mathcal{P}$.

	\begin{proof}[Proof of Claim 2]

	Given $r_i;$ and $r_i';$ with $\sum_ir_i=\sum_i r_i'=\iota'$, we have
	\begin{equation}
		\Lambda_{(r_i;)_{\iota},(r_i';)_{\iota}}=\frac{\sqrt{c_{r_1'\cdots r_d'}}}{\sqrt{c_{r_1\cdots r_d}}} \mathbb{E}_{X\sim \mathcal{P}^{\otimes d}}\left[ q_{r_i;}(X)p_{r_i';}(X) \right].	
	\end{equation}
	If $r_i;\neq r_i';$, there is at least one $i$ such that $r_i> r_i'$, then according to Lemma \ref{lem2}, we have
			\begin{equation}
				\Lambda_{(r_1\cdots r_d)_{\iota},(r_1'\cdots r_d')_{\iota}}=0
			\end{equation}
	Therefore $\Lambda_{\iota',\iota'}$ is diagonal. Now we have
			\begin{equation}
				\begin{split}
					\Lambda_{(r_1\cdots r_d)_{\iota},(r_1\cdots r_d)_{\iota}}&=\mathbb{E}_{X\sim \mathcal{P}^{\otimes d}} \left[ q_{r_i;}(X)p_{r_i;}(X) \right] \\
					&=\prod\limits_{i=1}^d \mathbb{E}_{\mathbf{x} \sim \mathcal{P}} [q_{r_i}(\mathbf{x})p_{r_i}(\mathbf{x})] \enspace.
				\end{split}
	\end{equation}
	Note that $q_0\equiv 1$. Since the set $I:= \{1\le i\le d: \text{$r_i$ is nonzero}\}$ is of size at most $\iota$, 
$\Lambda_{(r_i;)_{\iota},(r_i;)_{\iota}}$ is uniformly bounded by the following constant (depending on $\iota$ and $\mathcal{P}$):
		 	\begin{equation}
		 		\left(\max\limits_{ r_i} \mathbb{E}_{\mathbf{x} \sim \mathcal{P}} [q_{r_i}(\mathbf{x})p_{r_i}(\mathbf{x})] \right)^{\iota} \enspace.
		 	\end{equation}
	\end{proof}

	\noindent  \textbf{Claim 3:} Let $b_{\iota'}=\Lambda_{\iota'\iota'} a_{\iota'}+ \Lambda_{\iota',>\iota'}a_{>\iota'}$. Then $\|\Lambda_{\iota',>\iota'}\|_{\ell_2 \rightarrow \ell_2}$ has an upper bound that only depends on $\iota$.

	\begin{proof}[Proof of Claim 3]

		An entry $(\Lambda_{\iota',>\iota'})_{(r_1\cdots r_d)_{\iota},(r_1'\cdots r_d')_{\iota}}$ for $\sum_i r_i=\iota'$ can be obtained by
		\begin{align*}
		&(\Lambda_{\iota',>\iota'})_{(r_1\cdots r_d)_{\iota},(r_1'\cdots r_d')_{\iota}}\\
		&=\sqrt{\frac{c_{r_1'\cdots r_d'}\alpha_{r_1'+\cdots+r_d'}}{c_{r_1\cdots r_d}\alpha_{r_1+\cdots+r_d}}}d^{(r_1+\cdots+r_d-r_1'-\cdots-r_d')/2}\mathbb{E}_{X\sim \mathcal{P}^{\otimes d}}\left[ q_{r_i;}(X)p_{r_i';}(X) \right]\\
		&=\sqrt{\frac{c_{r_1'\cdots r_d'}\alpha_{r_1'+\cdots+r_d'}}{c_{r_1\cdots r_d}\alpha_{r_1+\cdots+r_d}}}d^{(r_1+\cdots+r_d-r_1'-\cdots-r_d')/2}
		\mathbb{E}_{\mathbf{x} \sim \mathcal{P}} [q_{r_i}(\mathbf{x})p_{r_i'}(\mathbf{x})] \enspace.
		\end{align*}
		
		Only when $\forall i, r_i\le r_i'$, the above term is nonzero and scales with $d$ in the order of $d^{(r_1+\cdots+r_d-r_1'-\cdots-r_d')/2}$. As a result,
		\begin{align*}
		b_{(r_1\cdots r_d ) _{\iota}}^2&=\left(\sum_{r_1',\cdots,r_d'}(\Lambda_{\iota',>\iota'})_{(r_1\cdots r_d)_{\iota},(r_1'\cdots r_d')_{\iota}}a_{(r_1'\cdots r_d')_{\iota}}\right)^2\\
		&=\left(\sum_{r_i\le r_i'~\forall i, ~\sum_i r_i'>\iota'}(\Lambda_{\iota',>\iota'})_{(r_1\cdots r_d)_{\iota},(r_1'\cdots r_d')_{\iota}}a_{(r_1'\cdots r_d')_{\iota}}\right)^2\\
		&\le\left(\sum_{r_i\le r_i',\sum_i r_i'>\iota'}(\Lambda_{\iota',>\iota'})_{(r_1\cdots r_d)_{\iota},(r_1'\cdots r_d')_{\iota}}^2\right)
		\left(\sum_{r_i\le r_i',\sum_i r_i'>\iota'}a_{(r_1'\cdots r_d')_{\iota}}^2\right)\\
		&\le\left(\sum_{l\ge 1}\sum_{\substack{r_i\le r_i'\\\sum_i r_i'=\iota'+l}}(\Lambda_{\iota',>\iota'})_{(r_1\cdots r_d)_{\iota},(r_1'\cdots r_d')_{\iota}}^2\right)
		\left(\sum_{r_i\le r_i',\sum_i r_i'>\iota'}a_{(r_1'\cdots r_d')_{\iota}}^2\right)\\
		&\lesssim\left(\sum_{l\ge 1}\sum_{\substack{r_i\le r_i'\\\sum_i r_i'=\iota'+l}}(d^{-l/2})^2\right)
		\left(\sum_{r_i\le r_i',\sum_i r_i'>\iota'}a_{(r_1'\cdots r_d')_{\iota}}^2\right)\\
		&\lesssim\left(\sum_{l\ge 1}d^l\times(d^{-l/2})^2\right)
		\left(\sum_{r_i\le r_i',\sum_i r_i'>\iota'}a_{(r_1'\cdots r_d')_{\iota}}^2\right)\\
		&\lesssim 
		\sum_{r_i\le r_i',\sum_i r_i'>\iota'}a_{(r_1'\cdots r_d')_{\iota}}^2 \enspace.
		\end{align*}
		Then we have
		\begin{align*}
		&\sum_{\sum_i r_i=\iota'}b_{(r_1\cdots r_d ) _{\iota}}^2\\\
		&\lesssim 
		\sum_{\sum_i r_i=\iota'	}
		\sum_{r_i\le r_i',\sum_i r_i'>\iota'}a_{(r_1'\cdots r_d')_{\iota}}^2\\
		&\lesssim \sum_{\sum_i r_i'>\iota'} a^2_{(r_1'\cdots r_d')_{\iota}}
		\end{align*}
		where the last inequality holds because for a fixed $r_1',\cdots,r_d'$ with $r_1'+\cdots + r_d'=\iota$, there is at most $2^{\iota}$ choices of $r_1,\cdots,r_d$ such that $\forall i, 0\le r_i\le r_i'$.
	\end{proof}
	
	Using induction on $\iota'\le \iota$ backwards will complete the proof that $\| b \| \asymp \| a \|$. Specifically,
	assume that $\|b_{>\iota'}\| \asymp \|a_{>\iota'}\|$, then since
	\begin{equation}
		\|b_{\iota'}\|\le \|\Lambda_{\iota'\iota'} a_{\iota'}\|+ \|\Lambda_{\iota',>\iota'}a_{>\iota'}\|\lesssim \|a_{\ge \iota'}\|,
	\end{equation}
	we have
	\begin{equation}
		\|b_{\ge \iota'}\|\le \|b_{\iota'}\|+\|b_{>\iota'}\|\lesssim \|a_{\ge \iota'}\|.
	\end{equation}
	For the other direction, we have
	\begin{equation*}
		\|a_{\iota'}\|=\|\Lambda_{\iota'\iota'}^{-1}(b_{\iota'}- \Lambda_{\iota',>\iota'}a_{>\iota'})\|\lesssim \|b_{\iota'}- \Lambda_{\iota',>\iota'}a_{>\iota'}\|\lesssim \|b_{\iota'}\|+\|a_{>\iota'}\|\lesssim  \|b_{\iota'}\|+\|b_{>\iota'}\|\lesssim\|b_{\ge \iota'}\|.
	\end{equation*}
	Then
	\begin{equation}
		\|a_{\ge \iota'}\|\lesssim \|b_{\ge \iota'}\|.
	\end{equation}
\end{proof}

Now we can proceed to analyze the spectrum of $\Psi^\top \Psi$, given the boundedness of $\Lambda$.

\subsection{Small Ball Property}

	Define the following function over $\mathbb{R}^d$ indexed by $u \in \mathbb{R}^{\binom{\iota+d}{\iota}}$
	\begin{equation}
		f_u(x)=\sum_{r_1,\cdots,r_d\ge 0, \sum_i r_i\le \iota}u_{(r_1\cdots r_d)_{\iota}}\sqrt{c_{r_1\cdots r_d} \alpha_{r_1+\cdots+r_d}}q_{r_1\cdots r_d}(x)/d^{(r_1+\cdots+r_d)/2} \enspace.
	\end{equation}
	In this section, we will prove that there exist constants $0 < \epsilon, \delta<1$, such that for any $u$ with $\norm{u}=1$,
			 \begin{equation}
				 \label{eq:small-ball}
			 	\mathbb{P}(f_u(X)^2> \epsilon \mathbb{E}[f_u(X)^2])\ge \delta \enspace.
			 \end{equation}
	This is so called the \textit{small-ball property} for the random variable $f_u(X)$, with $X\sim \mathcal{P}^{\otimes d}$.

\paragraph{Claim 1:}
		$\forall u\in \mathbb{R}^{\binom{\iota+d}{\iota}}$ with $\|u\|_2=1$, there are constants $\delta, \epsilon>0$ depending only on $\iota, \mathcal{P}$ such that with probability at least $1- \delta$
		\begin{equation}
		 	f_u(x_i)^2> \epsilon d^{-\iota}.
		 \end{equation} 

\noindent \textit{Proof of Claim 1.} First, according to the Paley-Zygmund inequality for $X\sim \mathcal{P}^{\otimes d}$ and any $0\le \theta\le 1$,
		 \begin{equation}
		 	\mathbb{P}(f_u(X)^2> \theta \mathbb{E}[f_u(X)^2])\ge (1- \theta)^2 \frac{\mathbb{E}[f_u(X)^2]^2}{\mathbb{E}[f_u(X)^4]} \enspace.
		 \end{equation}
		 Therefore we just need to show that
		 \begin{equation}
		 \label{husky}
		 	\mathbb{E}[f_u(X)^2]\gtrsim d^{-\iota},
		 \end{equation}
		 and
		 \begin{equation}
		 \label{alaskan_malamute}
		 	\mathbb{E}[f_u(X)^4]\lesssim \mathbb{E}[f_u(X)^2]^2.
		 \end{equation}

For equation \eqref{husky}, we have by the orthogonality of $q_{r_i;}$,
		 \begin{equation}
		 	\begin{split}
		 		\mathbb{E}[f_u(X)^2]&= \sum_{r_1,\cdots,r_d\ge 0, \sum_i r_i\le \iota}u^2_{(r_1\cdots r_d)_{\iota}}c_{r_1\cdots r_d} \alpha_{r_1+\cdots+r_d}/d^{r_1+\cdots+r_d}\\
		 		&\gtrsim\sum_{r_1,\cdots,r_d\ge 0, \sum_i r_i\le \iota}u^2_{(r_1\cdots r_d)_{\iota}}/d^{r_1+\cdots+r_d} \gtrsim d^{-\iota} \enspace.
		 	\end{split}
		 \end{equation}

		 Equation \eqref{alaskan_malamute} is more technical to establish, which we prove through the following lemma.
		 \begin{lemma}
			 \label{lem:l2l4}
		 	Let $f_ \gamma(x):= \sum\limits_{r_1,\cdots,r_d\ge 0, \sum_i r_i\le \iota}\gamma_{r_1\cdots r_d}q_{r_1\cdots r_d}(x)$, then
		 	\begin{equation}
		 		\mathbb{E}f_\gamma(X)^4\lesssim (\mathbb{E}f_ \gamma(X))^2)^2=\left(\sum_{\sum_i r_i\le \iota}\gamma_{r_1\cdots r_d}^2\right)^2 \enspace.
		 	\end{equation}
		 \end{lemma}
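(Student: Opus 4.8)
The plan is to expand $\mathbb{E}f_\gamma(X)^4$ as a sum over four multi-indices and control it term by term, exploiting the product structure $q_{r_i;}(x)=\prod_{i=1}^d q_{r_i}(x[i])$ together with independence of the coordinates of $X\sim\mathcal{P}^{\otimes d}$. Writing the expansion
\begin{equation*}
	\mathbb{E}f_\gamma(X)^4 = \sum_{s^{(1)}, s^{(2)}, s^{(3)}, s^{(4)}} \gamma_{s^{(1)}}\gamma_{s^{(2)}}\gamma_{s^{(3)}}\gamma_{s^{(4)}}\, \prod_{i=1}^d \mathbb{E}_{\mathbf{x}\sim\mathcal{P}}\!\left[ q_{s^{(1)}_i}(\mathbf{x})q_{s^{(2)}_i}(\mathbf{x})q_{s^{(3)}_i}(\mathbf{x})q_{s^{(4)}_i}(\mathbf{x}) \right],
\end{equation*}
where each $s^{(a)}$ ranges over multi-indices with $\sum_i s^{(a)}_i \le \iota$. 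First I would record the elementary one-dimensional facts: $\mathbb{E}[q_k(\mathbf{x})q_\ell(\mathbf{x})]=\delta_{k\ell}$; the product of four $q$'s has expectation zero unless the degrees satisfy a ``quadruple triangle'' condition (each of the four degrees is at most the sum of the other three — an immediate consequence of the triangle condition for triples applied after expanding one $q_k q_\ell$ in the basis, or directly from the fact that $q_k q_\ell$ has no component on $q_m$ for $m>k+\ell$); and crucially that $|\mathbb{E}[q_k q_\ell q_m q_p]|$ is bounded by a constant depending only on $\iota$ since all degrees are $\le\iota$ and $\mathcal{P}$ has enough moments (here the $\nu>\iota_0$ tail assumption is used; note $4\iota$-th moments suffice because the relevant $q$'s have degree $\le\iota$... actually one needs $\mathcal{P}$ to have finite moments up to order $4\iota$, which follows from $\nu>\iota_0\ge\iota$ only if $\nu$ is large enough — I would either strengthen the moment hypothesis locally or note the sub-Gaussian setting of Theorem~\ref{thm:variance} where all moments exist).

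The heart of the argument is a combinatorial/counting bound. Fix a coordinate $i$. For the product over coordinate $i$ to be nonzero we need, for each $a$, that $s^{(a)}_i$ is either $0$ or ``matched'': more precisely the four degrees at coordinate $i$ cannot have a strict maximum exceeding the sum of the other three. The key structural observation is that the total degree budget $\sum_i s^{(a)}_i\le\iota$ for each $a$ means that across all four indices only $O(\iota)$ coordinates are ``active'' (nonzero in some $s^{(a)}$). I would partition the set of active coordinates according to which subset of $\{1,2,3,4\}$ is nonzero there. Coordinates active in exactly one $s^{(a)}$ force $\mathbb{E}[q_k]=0$ unless $k=0$, contradiction — so every active coordinate is active in at least two of the four indices. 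This pairing structure is exactly what produces the bound $(\sum_s \gamma_s^2)^2$: I would group the four-index sum into (roughly) a sum over pairs, apply Cauchy–Schwarz to peel off two factors of $\sqrt{\sum\gamma_s^2}$ against two more, with the number of ``compatible'' configurations for the remaining two indices bounded by a constant depending only on $\iota$ (since the active coordinates and their degrees are constrained to a bounded set once two of the indices are fixed).

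Concretely, the steps in order: (1) expand into the four-fold sum and factor over coordinates; (2) establish the one-dimensional vanishing and boundedness facts for $\mathbb{E}[q_k q_\ell q_m q_p]$; (3) show every active coordinate must be shared by $\ge 2$ of the four multi-indices, hence the support pattern is one of boundedly many ``matching types''; (4) for each matching type, bound the contribution by $C(\iota)\big(\sum_s\gamma_s^2\big)^2$ via Cauchy–Schwarz, using that once two of the $s^{(a)}$ are fixed there are only $O_\iota(1)$ choices of the other two with nonzero contribution and uniformly bounded coefficient; (5) sum over the finitely many types. I expect step (3)–(4) — correctly formalizing the matching/counting so that exactly two free factors of $\ell_2$ norm survive and the combinatorial multiplicity stays dimension-free — to be the main obstacle; the subtlety is that a coordinate can be shared by $2$, $3$, or all $4$ indices and each case must be routed into the Cauchy–Schwarz differently, but the total-degree bound $\iota$ keeps everything finite. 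Everything else is bookkeeping.
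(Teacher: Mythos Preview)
Your overall strategy---expand the fourth moment, factor over coordinates, exploit the ``every active coordinate is shared by at least two indices'' constraint, then Cauchy--Schwarz---is sound in spirit, but the specific counting claim you lean on in step~(4) is false as stated. Take $s^{(1)}=s^{(2)}=0$ (the zero multi-index): then the coordinate-wise condition forces only $s^{(3)}_i=s^{(4)}_i$ at every $i$, so $s^{(3)}=s^{(4)}$ can be \emph{any} of the $\binom{\iota+d}{\iota}$ multi-indices, not $O_\iota(1)$ of them. The contribution of that block is still fine ($\gamma_0^2\sum_s\gamma_s^2$), but it shows that ``fix two, count the other two'' is not a dimension-free count; the free pair can roam over coordinates outside the supports of the fixed pair, as long as they roam together. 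So the Cauchy--Schwarz has to be organized more carefully than you describe---you cannot simply fix two indices and enumerate the rest.

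The paper sidesteps this by not working with the raw four-fold sum at all. It first expands $f_\gamma^2$ in the orthogonal basis, $f_\gamma^2=\sum_{|r|\le 2\iota}\theta_r\, q_r$, so that by orthogonality $\mathbb{E}f_\gamma^4=\sum_r\theta_r^2$. Each $\theta_r=\sum_{r',r''}T^{r}_{r';r''}\gamma_{r'}\gamma_{r''}$ is now only a \emph{two}-index sum, and the triangle condition on $(r,r',r'')$ gives the crucial dimension-free count: for fixed $r$ and $r'$ there are only $O_\iota(1)$ choices of $r''$ with $T^{r}_{r';r''}\neq 0$ (and symmetrically). One Cauchy--Schwarz on the inner sum, plus the observation that for fixed $(r',r'')$ only $O_\iota(1)$ values of $r$ contribute, yields $\sum_r\theta_r^2\lesssim(\sum_s\gamma_s^2)^2$. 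The reduction from four indices to three (one ``outer'' $r$ and a pair $(r',r'')$) is exactly what tames the combinatorics that trips up your step~(4); I would recommend adopting that decomposition rather than trying to patch the direct four-fold count.
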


		 \begin{proof}
		 	[Proof of Lemma~\ref{lem:l2l4}. ] Write
		 	\begin{equation}
		 		f_ \gamma(x)^2=\sum_{\sum_i r_i\le 2 \iota} \theta_{r_1\cdots r_d} q_{r_1\cdots r_d}(x) \enspace.
		 	\end{equation}
		 	Since
		 	\begin{equation}
		 		f_ \gamma(x)^2=\sum_{r_i'; r_i'';} \gamma_{r_i';} \gamma_{r_i'';}q_{r_i';}(x)q_{r_i'';}(x) \enspace,
		 	\end{equation}
		 	then by triangle condition there are coefficients $T_{r'_i;r''_i;}^{r_i;}, M^{r_i;}_{r'_i;r_i'';}$ (defined in \eqref{eq:T} and \eqref{eq:M}) such that
		 	\begin{equation*}
		 		\begin{split}
		 			\theta_{r_1\cdots r_d}&=\mathbb{E}_{X\sim \mathcal{P}^{\otimes d}} \left[ q_{r_i;}(X)f_ \gamma(X)^2 \right]\\
		 			&=\mathbb{E}_{X\sim \mathcal{P}^{\otimes d}} \left\{ q_{r_i;}(X)\sum_{r_i';r_i'';} \gamma_{r_i';} \gamma_{r_i'';}q_{r_i';}(X)q_{r_i'';}(X) \right\}\\
		 			&=\sum_{r_i';r_i'';} \gamma_{r_i';} \gamma_{r_i'';} \mathbb{E}_{X\sim \mathcal{P}^{\otimes d}} \left[q_{r_i;}(X)q_{r_i';}(X)q_{r_i'';}(X) \right]\\
		 			&=\sum_{\forall i, r_i'+r_i''\ge r_i} T^{r_i;}_{r_i';r_i'';}\gamma_{r_i';} \gamma_{r_i'';} \\
		 			&=\sum_{s_i'+s_i''= r_i}\sum_{\substack{s_i'\le r_i',s_i''\le r_i''\\ T^{r_i}_{r_i'; r_i'';}\neq 0}} T^{r_i;}_{r_i';r_i'';}\gamma_{r_i';} \gamma_{r_i'';}/M^{r_i;}_{r_i';r_i'';}\\
		 		\end{split}
		 	\end{equation*}
		 	where the coefficients are given by
		 	\begin{equation}
				\label{eq:T}
		 		T_{r'_i;r''_i;}^{r_i;}:=\mathbb{E}_{X\sim \mathcal{P}^{\otimes d}} \left[q_{r_i;}(X)q_{r_i';}(X)q_{r_i'';}(X) \right],
		 	\end{equation}
		 	and
		 	\begin{equation}
				\label{eq:M}
		 		M_{r'_i;r''_i;}^{r_i;}:=\# \{(s_i';,s_i'';):s_i'+s_i''=r_i,s_i'\le r_i',s_i''\le r_i''\}.
		 	\end{equation}

		 	Now we will upper bound $T^{r_i;}_{r_i';r_i'';}$
		 	\begin{align*}
		 	&|T_{r'_i;r''_i;}^{r_i;}|\\
		 	&=\left|\mathbb{E}_{X\sim \mathcal{P}^{\otimes d}} \left[q_{r_i;}(X)q_{r_i';}(X)q_{r_i'';}(X) \right]\right|\\
		 	&=\left|\mathbb{E}_{X\sim \mathcal{P}^{\otimes d}} \prod_{i=1}^dq_{r_i}(X[i])q_{r_i'}(X[i])q_{r_i''}(X[i]) \right|\\
		 	&=\left|\prod_{i=1}^d \mathbb{E}_{\mathbf{x}\sim \mathcal{P}}q_{r_i}(\mathbf{x})q_{r_i'}(\mathbf{x})q_{r_i''}(\mathbf{x})\right| \enspace.
		 	\end{align*}
			Note that $q_0\equiv 1$. Since the set $I:= \{i \in [d]: \text{one of $r_i,r_i',r_i''$ is nonzero}\}$ is of size at most $3\iota$, 
$T^{r_i;}_{r'_i;r''_i;}$ is uniformly bounded by the following constant (depending on $\iota$ and $\mathcal{P}$) for $\sum r_i,\sum r_i',\sum r_i''\le \iota$:
		 	\begin{equation}
		 		\left(\max\limits_{ r_i,r'_i,r''_i} \mathbb{E}_{X\sim \mathcal{P}} \left[ q_{r_i}(X)q_{r_i'}(X)q_{r_i''}(X) \right] \right)^{3\iota} \enspace.
		 	\end{equation}

		 	As a result, we have
		 	\begin{equation}
		 		\begin{split}
		 			&\theta^2_{r_1\cdots r_d}\\
		 			=&\left(\sum_{s_i'+s_i''= r_i}\sum_{s_i'\le r_i',s_i''\le r_i''} T^{r_i;}_{r_i';r_i'';}\gamma_{r_i';} \gamma_{r_i'';}/M^{r_i;}_{r_i';r_i'';}\right)^2\\
		 			\lesssim&\sum_{s_i'+s_i''=r_i}\left(\sum_{s_i'\le r_i',s_i''\le r_i''} T^{r_i;}_{r_i';r_i'';}\gamma_{r_i';} \gamma_{r_i'';}/M^{r_i;}_{r_i';r_i'';}\right)^2 \quad \text{ignoring constants depending only on $\iota$}\\
		 			\lesssim&\sum_{s_i'+s_i''=r_i}
		 			\left(\sum_{\substack{s_i'\le r_i',s_i''\le r_i''\\T^{r_i;}_{r_i';r_i'';}\neq 0}} (T^{r_i;}_{r_i';r_i'';}/M^{r_i;}_{r_i';r_i'';})^2\gamma_{r_i';}^2\right)
		 			\left(\sum_{\substack{s_i'\le r_i',s_i''\le r_i''\\T^{r_i;}_{r_i';r_i'';}\neq 0}} \gamma_{r_i'';}^2\right) \quad \text{Cauchy inequality}
		 			\\
		 			\lesssim&\sum_{s_i'+s_i''=r_i}
		 			\left(\sum_{\substack{s_i'\le r_i',s_i''\le r_i''\\T^{r_i;}_{r_i';r_i'';}\neq 0}} \gamma_{r_i';}^2\right)
		 			\left(\sum_{\substack{s_i'\le r_i',s_i''\le r_i''\\T^{r_i;}_{r_i';r_i'';}\neq 0}} \gamma_{r_i'';}^2\right) \enspace.
		 		\end{split}
		 	\end{equation}

		 	Note that for $T^{r_i;}_{r_i';r_i'';}\neq 0$, we must have by the triangle condition
		 	\begin{equation}
		 		\forall i, r_i''\le r_i'+r_i
		 	\end{equation}
		 	which means that for any $r_i''\neq 0$, either $r_i'\neq 0$ or $r_i\neq 0$. Then
		 	\begin{equation}
		 		\{i\in[d]:r_i''\neq 0\}\subset \{i\in[d]:r_i'\neq 0\}\cup\{i\in[d]:r_i\neq 0\} .
		 	\end{equation}

		 	As a result, for fixed $r_i';$ and $r_i;$~, there is less than constantly many $r_i'';$ such that $T^{r_i;}_{r_i';r_i'';}\neq 0$. Similarly, for fixed $r_i'';$ and $r_i;$, there is less than constantly many $r_i';$ such that $T^{r_i;}_{r_i';r_i'';}\neq 0$.

		 	We now have
		 	\begin{equation}
		 		\begin{split}
		 			\theta^2_{r_1\cdots r_d}&\lesssim\sum_{s_i'+s_i''=r_i}
		 			\left(\sum_{\substack{s_i'\le r_i',s_i''\le r_i''\\T^{r_i;}_{r_i';r_i'';}\neq 0}} \gamma_{r_i';}^2\right)
		 			\left(\sum_{\substack{s_i'\le r_i',s_i''\le r_i''\\T^{r_i;}_{r_i';r_i'';}\neq 0}} \gamma_{r_i'';}^2\right)
		 			\\
		 			&\lesssim\sum_{s_i'+s_i''=r_i}
		 			\left(\sum_{s_i'\le r_i'}\gamma_{r_i';}^2\right)
		 			\left(\sum_{s_i''\le r_i''} \gamma_{r_i'';}^2\right)
		 			\\
		 			&=\sum_{s_i'+s_i''=r_i}
		 			\sum_{s_i'\le r_i'}\sum_{s_i''\le r_i''}
		 			\gamma_{r_i';}^2
		 			\gamma_{r_i'';}^2
		 			\\
		 		\end{split}
		 	\end{equation}

		 	As a result,
		 	\begin{equation}
		 		\sum_{r_i}\theta^2_{r_1\cdots r_d}\lesssim\sum_{r_i}\sum_{s_i'+s_i''=r_i}
		 			\sum_{s_i'\le r_i'}\sum_{s_i''\le r_i''}
		 			\gamma_{r_i';}^2
		 			\gamma_{r_i'';}^2
		 	\end{equation}
		 	Note that on the RHS, for a fixed $r'_i;, r''_i;$ the term $\gamma_{r'_i;}^2 \gamma_{r''_i;}^2$ appears constantly many times (with constant relying on $\iota$ only), since $M_{r'_i;r''_i;}^{r_i;} \leq 2^{2\iota}$ and there are at most $2^{2\iota}$ number of $r_i;$'s such that $M_{r'_i;r''_i;}^{r_i;} > 0$.
			Finally we have
		 	\begin{equation}
		 		\mathbb{E}[f_ \gamma(X)^4]=\sum_{r_i}\theta^2_{r_1\cdots r_d}\lesssim\sum_{r_i',r_i''} \gamma_{r_i';}^2\gamma_{r_i'';}^2= (\mathbb{E} [f_ \gamma(X)^2])^2 \enspace.
		 	\end{equation}
			\end{proof}

\subsection{Lower Isometry}

We now proceed to lower bound the smallest eigenvalue for $\frac{1}{n}\Psi^\top \Psi$, based on the small-ball property established.

\begin{lemma}
\label{mhw}
	With probability at least $1-e^{- \Omega(n/d^{\iota})}$, the smallest eigenvalue of $\frac{1}{n} \Psi^\top \Psi$ is larger than $Cd^{-\iota}$.
\end{lemma}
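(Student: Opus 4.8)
The plan is to use the small-ball (``learning without concentration'') method of \citep{koltchinskii2015bounding,mendelson2014learning}, exactly as the proof outline anticipates. Observe first that the $i$-th coordinate of $\Psi u$ equals $f_u(x_i)$, with $f_u$ the function defined at the top of the Small Ball subsection; hence $\lambda_{\min}(\tfrac1n\Psi^\top\Psi)=\inf_{\|u\|_2=1}\tfrac1n\sum_{i=1}^n f_u(x_i)^2$. Fix the threshold $t^2:=\epsilon d^{-\iota}$, where $\epsilon$ is small enough that combining the small-ball property \eqref{eq:small-ball} with the variance floor \eqref{husky} gives $\mathbb{P}\big(f_u(X)^2\ge t^2\big)\ge\delta$ for \emph{every} unit vector $u$, with $\delta>0$ a constant depending only on $\iota$ and $\mathcal{P}$. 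The elementary bound $f_u(x_i)^2\ge t^2\,\mathbf{1}\{f_u(x_i)^2\ge t^2\}$ then reduces the problem to showing that, with high probability, $\inf_{\|u\|=1}\tfrac1n\sum_i\mathbf{1}\{f_u(x_i)^2\ge t^2\}\ge\delta/2$, since this yields $\lambda_{\min}(\tfrac1n\Psi^\top\Psi)\ge \tfrac{\delta}{2}t^2=\tfrac{\epsilon\delta}{2}\,d^{-\iota}$.

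For the uniform lower bound on the indicator averages I would \emph{not} discretize the sphere in $\mathbb{R}^{\binom{\iota+d}{\iota}}$: that would cost a union-bound factor $e^{\Theta(d^{\iota}\log(1/\eta))}$ and, worse, would require a separate high-probability control of $\|\Psi^\top\Psi\|_{\rm op}$ to bound the discretization error at the scale $d^{-\iota}$. Instead I would replace the indicator by a Lipschitz surrogate: fix $\psi:\mathbb{R}\to[0,1]$ that is $(2/t)$-Lipschitz, even, vanishes at $0$, and satisfies $\mathbf{1}\{|s|\ge t\}\ge\psi(s)\ge\mathbf{1}\{|s|\ge t/2\}$. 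Then $\tfrac1n\sum_i\mathbf{1}\{f_u(x_i)^2\ge t^2\}\ge\tfrac1n\sum_i\psi(f_u(x_i))$ and $\mathbb{E}\,\psi(f_u(X))\ge\mathbb{P}(f_u(X)^2\ge t^2/4)\ge\delta$ (after harmlessly shrinking $\epsilon$), so it suffices to bound $\Delta:=\sup_{\|u\|=1}\big(\mathbb{E}\,\psi(f_u(X))-\tfrac1n\sum_i\psi(f_u(x_i))\big)$ by $\delta/2$. Since every summand lies in $[0,1]$, $\Delta$ has bounded differences $1/n$, so McDiarmid's inequality gives $\mathbb{P}(\Delta\ge\mathbb{E}\Delta+s)\le e^{-2ns^2}$; this yields the claimed $e^{-\Omega(n/d^{\iota})}$ tail (in fact an $e^{-\Omega(n)}$ tail).

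It remains to bound $\mathbb{E}\Delta$. By standard symmetrization and Ledoux--Talagrand contraction (the latter using the Lipschitz constant $2/t$ of $\psi$), $\mathbb{E}\Delta\lesssim \tfrac1t\,\mathbb{E}\sup_{\|u\|=1}\big|\tfrac1n\sum_i\sigma_i f_u(x_i)\big| = \tfrac1t\cdot\tfrac1n\,\mathbb{E}\big\|\sum_i\sigma_i\Psi(x_i)\big\|_2\le \tfrac1t\cdot\tfrac{1}{\sqrt n}\sqrt{\mathbb{E}\|\Psi(X)\|_2^2}$, with $\sigma_i$ i.i.d. Rademacher. The key computation is that $\mathbb{E}\|\Psi(X)\|_2^2=\sum_{r_i;}c_{r_i;}\,\alpha_{\sum_i r_i}\,\mathbb{E}[q_{r_i;}(X)^2]\,d^{-\sum_i r_i}=O(1)$: for each degree $j\le\iota$ there are $\binom{d+j-1}{j}\asymp d^{j}$ multi-indices, each contributing a term of order $d^{-j}$ (the factors $c_{r_i;}$, $\alpha_{\sum r_i}$ and $\mathbb{E}[q_{r_i;}^2]$ being bounded by constants depending only on $\iota$ and $\mathcal{P}$, since at most $\iota$ of the $r_i$ are nonzero), so the whole sum is at most $(\iota+1)\cdot O(1)$. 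Hence $\mathbb{E}\Delta\lesssim \tfrac{1}{t\sqrt n}\asymp\sqrt{d^{\iota}/n}=o(1)$ by the hypothesis $d^{\iota}\log d=o(n)$, so $\mathbb{E}\Delta<\delta/4$ for $n$ large; taking $s=\delta/4$ in McDiarmid closes the argument: on an event of probability $1-e^{-\Omega(n/d^{\iota})}$, $\Delta<\delta/2$, hence $\lambda_{\min}(\tfrac1n\Psi^\top\Psi)\ge Cd^{-\iota}$ with $C=\epsilon\delta/2$.

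The main obstacle, and the reason the small-ball route is needed, is the uniform-in-$u$ control of the empirical indicator averages: the class is indexed by a sphere of dimension $\binom{\iota+d}{\iota}\asymp d^{\iota}$, far too large for a crude net argument to survive against an $e^{-cn}$ budget once one also insists on resolving the scale $d^{-\iota}$. The Lipschitz-surrogate device turns this into a genuine empirical-process problem where symmetrization plus contraction reduce everything to a single benign quantity, $\sqrt{\mathbb{E}\|\Psi(X)\|_2^2}=O(1)$, which is exactly where the geometric decay of the population spectrum (degree-$j$ block of dimension $\asymp d^{j}$ with eigenvalues $\asymp d^{-j}$) enters. The other point requiring care is aligning the truncation level $t^2\asymp d^{-\iota}$ with the uniform variance floor $\mathbb{E}f_u(X)^2\gtrsim d^{-\iota}$ from \eqref{husky}, which is what makes the small-ball probability $\delta$ a genuine constant --- not shrinking with $d$ --- for every unit direction.
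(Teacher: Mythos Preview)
Your argument is essentially correct and is, in fact, the cleaner version of the small-ball method --- but it is \emph{not} the route the paper takes. The paper does exactly what you say you would not do: it fixes $u$, applies Hoeffding to the indicators $\mathbf{1}\{f_u(x_i)^2\ge \epsilon d^{-\iota}\}$ to get an $e^{-cn}$ bound per direction, then discretizes the unit sphere in $\mathbb{R}^{\binom{\iota+d}{\iota}}$ with an $r$-net and union-bounds. To make the discretization error harmless at scale $d^{-\iota}$, the paper separately bounds the Lipschitz constant of $u\mapsto u^\top(\tfrac1n\Psi^\top\Psi)u$, i.e.\ $\|\tfrac1n\Psi^\top\Psi\|_{\mathrm{op}}$, via a crude Chebyshev/Markov argument on the entries of $\mathbf{K}^{[\le\iota]}$; this introduces an auxiliary high-probability event and a rather delicate balancing of $L$ and $r$ in the final probability accounting. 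Your Lipschitz-surrogate/contraction route avoids the operator-norm detour entirely: after contraction, the complexity collapses to $\tfrac{1}{t\sqrt n}\sqrt{\mathbb{E}\|\Psi(X)\|_2^2}$, and your observation that $\mathbb{E}\|\Psi(X)\|_2^2=\sum_{j\le\iota}\alpha_j\cdot d^{-j}\sum_{|r|=j}c_{r}=\sum_{j\le\iota}\alpha_j=O(1)$ (multinomial theorem) is exactly the right computation. You also get the slightly stronger $e^{-\Omega(n)}$ tail from McDiarmid, whereas the paper ends up with $e^{-\Omega(n/d^{\iota})}$ after the net bookkeeping.

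One small slip to fix: the inequalities you wrote for $\psi$ are mutually inconsistent. As stated, $\mathbf{1}\{|s|\ge t\}\ge\psi(s)\ge\mathbf{1}\{|s|\ge t/2\}$ is impossible since $\mathbf{1}\{|s|\ge t\}\le\mathbf{1}\{|s|\ge t/2\}$. You want the standard sandwich $\mathbf{1}\{|s|\ge t\}\le\psi(s)\le\mathbf{1}\{|s|\ge t/2\}$ (so $\psi$ vanishes on $[-t/2,t/2]$, equals $1$ outside $[-t,t]$, and is $(2/t)$-Lipschitz). Then the chain reads
\[
\tfrac1n\sum_i f_u(x_i)^2 \;\ge\; \tfrac{t^2}{4}\cdot\tfrac1n\sum_i \mathbf{1}\{|f_u(x_i)|\ge t/2\}\;\ge\;\tfrac{t^2}{4}\cdot\tfrac1n\sum_i\psi(f_u(x_i)),
\]
while on the expectation side $\mathbb{E}\,\psi(f_u(X))\ge\mathbb{P}(|f_u(X)|\ge t)\ge\delta$ by the small-ball bound at level $t$. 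With this correction your argument goes through unchanged and yields $\lambda_{\min}\ge \tfrac{\epsilon\delta}{8}\,d^{-\iota}$ on the McDiarmid event.
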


\begin{enumerate}[Step 1.]	 
	\item We will first prove: there is $\epsilon>0$ such that for any  $u\in \mathbb{R}^{\binom{\iota+d}{\iota}}$ with $\|u\|_2=1$,
	\begin{equation}
		\mathbb{P}\left(u^\top \Psi^\top \Psi u\ge c n d^{-\iota} \right)\ge 1- e^{-c' n}.
	\end{equation}

	Since
	\begin{equation}
		 u^\top \Psi^\top \Psi u= \sum\limits_{i=1}^n f_u(x_i)^2
	\end{equation}
	with $f_u(x_i)^2\ge 0$ i.i.d. drawn.
	Define $Z_i=1_ {\{f_u(x_i)^2\ge \epsilon d^{-\iota}\}}$. According to the Claim 1 (the small ball property \eqref{eq:small-ball}), we can choose $\epsilon$ such that $\mathbb{E}Z_i> C(\iota,\mathcal{P})>0$. Denote this $C(\iota,\mathcal{P})$ as $\delta$. Now we have
	\begin{equation}
		u^\top \Psi^\top \Psi u\ge \epsilon d^{-\iota} \sum\limits_{i=1}^n Z_i .
	\end{equation}
	Using the Hoeffding's inequality,
	\begin{equation*}
		\mathbb{P}\left(\sum_{i=1}^n Z_i\le n(\delta - t) \right)\le e^{-2t^2n} .
	\end{equation*}
	Take $t=\delta/2$, we get that
	\begin{equation*}
		\mathbb{P}\left(u^\top ( \frac{1}{n} \Psi^\top \Psi ) u\ge \frac{\epsilon \delta }{2 d^{\iota}} \right)\ge 1- e^{-\delta^2 n/2}.
	\end{equation*}
	\item Now we show that there is constant $C$ such that for any $L>0$, $ u^\top (1/n \cdot \Psi^\top \Psi )u$ is $L$-Lipschitz w.r.t. $u$ on the sphere, with probability at least $1 - C\left( \frac{n^2}{d} L^{-\frac{2}{\iota}} + n^{1-\frac{1}{2\iota}} L^{-\frac{1}{\iota}} \right)$.
	In fact, for $\|u\|=\|v\|=1$, we have
	\begin{align*}
			&\|u^\top \Psi^\top \Psi u-v^\top \Psi^\top \Psi v\|\\
			&=\|u^\top \Psi^\top \Psi u-v^\top \Psi^\top \Psi u+u^\top \Psi^\top \Psi v-v^\top \Psi^\top \Psi v\|\\
			&=\|(u-v)^\top \Psi^\top \Psi u\|+\|(u-v)^\top \Psi^\top \Psi v\|\\
			&\le\|u-v\|\|\Psi^\top \Psi\|_{\ell_2\rightarrow \ell_2}\|u\|+\|u-v\|\|\Psi^\top \Psi\|_{\ell_2\rightarrow \ell_2} \| v\|\\
			&=2\|u-v\|\|\Psi^\top \Psi\|_{\ell_2\rightarrow \ell_2}.
	\end{align*}
	Therefore, the map $u\to u^\top (1/n \cdot \Psi^\top \Psi) u$ is $2\|1/n \cdot \Psi^\top \Psi\|_{\ell_2\rightarrow \ell_2}$-Lipschitz.
Now we need to bound the spectral norm of $\Psi^\top \Psi$. We have
	\begin{align*}
					\|1/n \cdot \Psi^\top \Psi\|_{\ell_2\rightarrow \ell_2}^2
					&=\|1/n \cdot \Lambda^{-1\top}\Phi^\top \Phi \Lambda^{-1}\|_{\ell_2\rightarrow \ell_2}^2\\
					&\lesssim\|1/n \cdot \Phi^\top \Phi \|_{\ell_2\rightarrow \ell_2}^2 =\| 1/n \cdot \Phi \Phi^\top\|_{\ell_2\rightarrow \ell_2}^2\\
					&=\| \mathbf{K}^{[\le \iota]}\|_{\ell_2\rightarrow \ell_2}^2
					= \frac{1}{n^2}\sum_{1\le ij\le n} \left(h^{[\le \iota]}( x_i^\top x_j/d)\right)^2
	\end{align*}
	The last quantity is at most
	\begin{align*}
					&\frac{1}{n^2}\sum_{1\le i,j\le n} \left(\sum_{i=0}^{\iota}\alpha_i|x_i^\top x_j/d|^{i}\right)^2 \lesssim \frac{1}{n^2}\sum_{i,j} \left( 1+(x_{i}^\top x_j/d)^{2\iota} \right)\\
					& = \frac{1}{n^2} \sum_{ij} 1 + Y_{ij}^{2\iota}
	\end{align*}	
	with $Y_{ij}:= \frac{x_i^\top x_j}{d} $. We know that by Chebyshev's and Markov's inequality, for any $i \neq j\in [n]$, due to $\mathbb{E} \left[x_i[k]\right] = 0$,
	\begin{align}
		\mathbb{P}\left(\frac{1}{d}\sum_{k=1}^d x_i[k] x_j[k] \geq t \right) \leq \frac{C}{d t^2}, ~~\text{and} \\
		\mathbb{P}\left(\frac{1}{d}\sum_{k=1}^d (x_i[k])^2 \geq s \right) \leq \frac{C}{s} \enspace.
	\end{align}
	Choose $t \asymp L^{\frac{1}{\iota}}$, and $s \asymp n^{\frac{1}{2\iota}} L^{\frac{1}{\iota}}$
	Therefore, with probability $$1 - \frac{C n^2}{d t^2}-  \frac{Cn}{s} = 1 - C\left( \frac{n^2}{d} L^{-\frac{2}{\iota}} + n^{1-\frac{1}{2\iota}} L^{-\frac{1}{\iota}} \right),$$
	the following bound on the Lipchitz constant holds 
	\begin{align}
		\| 1/n \cdot \Psi^\top \Psi\|_{\ell_2\rightarrow \ell_2} \lesssim 2(1+L^2)^{1/2} \asymp L.
	\end{align}

	\item Suppose $\frac{d^{\iota}\log d}{n}=o(1)$, we will show that with probability at least $1-e^{- \Omega(n/d^{\iota})}$,
	\begin{equation}
		\inf_{ \| u\| = 1} u^\top \left(\frac{1}{n}\Psi^\top \Psi \right) u = \Omega(d^{-\iota})\enspace.
	\end{equation}
	
	Make an $r$-covering net $\mathcal{N}_{r}$ of the unit sphere $\| u\| = 1$ with radius 
	\begin{align}
		r = \frac{1}{2L} \frac{\epsilon \delta}{8} d^{-\iota}.
	\end{align}
	Clearly, the cardinality of such $r$-covering is bounded by $|\mathcal{N}_{r}| \leq (1+1/\epsilon)^{\binom{\iota+d}{\iota}}$. Therefore, with probability at least $1 - (1+1/r)^{\binom{\iota+d}{\iota}} e^{- \delta^2n/2}$, we know for any elements $v \in \mathcal{N}_{r}$ in the $\epsilon$-cover,
	\begin{align}
		v^\top \left(\frac{1}{n}\Psi^\top \Psi \right) v \geq \frac{\epsilon \delta }{2 d^{\iota}}.
	\end{align}
	
	Recall that with probability at least $1 - C nd L^{-\frac{\nu}{2\iota}}$, $u^\top (1/n \cdot \Psi^\top \Psi )u$ is $L$-Lipschitz w.r.t. $u$ on the sphere. For any $u$, there exists $v \in \mathcal{N}_{r}$, such that
	\begin{align*}
		 u^\top \left(\frac{1}{n}\Psi^\top \Psi \right) u -  v^\top \left(\frac{1}{n}\Psi^\top \Psi \right) v \geq - L \|u - v \| \geq -L r = -\frac{\epsilon \delta}{16} d^{-\iota}\\
		 u^\top \left(\frac{1}{n}\Psi^\top \Psi \right) u \geq v^\top \left(\frac{1}{n}\Psi^\top \Psi \right) v -\frac{\epsilon \delta}{16} d^{-\iota} \geq \frac{\epsilon \delta}{4} d^{-\iota} \enspace.
	\end{align*}
	So far we have proved with that probability at least
	\begin{align}
		1 - C\left( \frac{n^2}{d} L^{-\frac{2}{\iota}} + n^{1-\frac{1}{2\iota}} L^{-\frac{1}{\iota}} \right) - (1+1/r)^{\binom{\iota+d}{\iota}} e^{-\frac{\delta^2}{2} n}
	\end{align}
	the following holds
	\begin{equation}
		\inf_{ \| u\| = 1} u^\top \left(\frac{1}{n}\Psi^\top \Psi \right) u = \Omega(d^{-\iota})\enspace.
	\end{equation}
	Now let's control the probability via a proper choice of $L$ and $r$.
	If we choose $L = \exp(\iota(n-d^\iota \log d)) n^{\iota-\frac{1}{2}}$, then it is easy to verify that $\frac{n^2}{d} L^{-\frac{2}{\iota}} + n^{1-\frac{1}{2\iota}} L^{-\frac{1}{\iota}}  \leq \exp(-c' (n - d^\iota \log d))$, and
	\begin{align}
		(1+3/r)^{\binom{\iota+d}{\iota}} e^{-\frac{\delta^2}{2} n} \leq \exp( d^{\iota} \log(d^{\iota} L/c')  - cn ) \leq \exp(- c'' n/d^{\iota}).
	\end{align}

	\item Put things together, we have shown that w.h.p.,
	\begin{equation}
		\Theta= \Phi^\top \Phi/n=\Lambda^\top \Psi^\top \Psi \Lambda/n\succ c d^{-\iota} .
	\end{equation}

	As a result $\mathbf{K}^{[\iota]}=\Phi \Phi^\top/n$ has $\binom{\iota+d}{\iota}$ number of nonzero eigenvalues, all of them larger than $c d^{-\iota}$.
\end{enumerate}


\section{Technical Proofs}
\subsection{Proofs in Section~\ref{sec:RKHS}}

\begin{prop}[Leave-one-out]
	\label{prop:leave-one-out}
	\begin{align}
		\mathbb{E}_{\bm{X}} \| f_*(\bm{X}) - \tilde{f}_n(\bm{X}) \|^2 \precsim 1
	\end{align}
\end{prop}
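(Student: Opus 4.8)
The plan is to exploit the exchangeability of the design together with the leave-one-out structure of $\tilde{f}_n$. Since $x_1,\dots,x_n$ are i.i.d., we have $\mathbb{E}_{\bm{X}}\norm{f_*(\bm{X})-\tilde{f}_n(\bm{X})}^2=\sum_{j=1}^n\mathbb{E}_{\bm{X}}\,(f_*(x_j)-\tilde{f}_n(x_j))^2=n\,\mathbb{E}_{\bm{X}}\,(f_*(x_1)-\tilde{f}_n(x_1))^2$, so it suffices to show the per-coordinate error is of order $1/n$. First I would peel off the diagonal (``self'') term, writing $\tilde{f}_n(x_1)=\tfrac1n k(x_1,x_1)\rho_*(x_1)+\tfrac1n\sum_{i=2}^n k(x_1,x_i)\rho_*(x_i)$.

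Next, conditioning on $x_1$, the summands $k(x_1,x_i)\rho_*(x_i)$, $i\ge 2$, are i.i.d. with conditional mean exactly $f_*(x_1)=\mathbb{E}_{z\sim\mathcal{P}_X}[k(x_1,z)\rho_*(z)]$. Setting $W_i:=k(x_1,x_i)\rho_*(x_i)-f_*(x_1)$ (mean zero given $x_1$), one gets $\tilde{f}_n(x_1)-f_*(x_1)=\tfrac1n k(x_1,x_1)\rho_*(x_1)-\tfrac1n f_*(x_1)+\tfrac1n\sum_{i=2}^n W_i$. Applying $(a+b+c)^2\le 3(a^2+b^2+c^2)$ and using independence to evaluate $\mathbb{E}[(\sum_{i=2}^n W_i)^2\mid x_1]=(n-1)\mathbb{E}[W_2^2\mid x_1]\le n\,\mathbb{E}[k(x_1,x_2)^2\rho_*(x_2)^2\mid x_1]$, I would bound $\mathbb{E}[(f_*(x_1)-\tilde{f}_n(x_1))^2\mid x_1]$ by $\tfrac{3}{n^2}k(x_1,x_1)^2\rho_*(x_1)^2+\tfrac{3}{n^2}f_*(x_1)^2+\tfrac3n\mathbb{E}[k(x_1,x_2)^2\rho_*(x_2)^2\mid x_1]$.

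Finally, taking expectation over $x_1$ and invoking the hypotheses of Theorem~\ref{thm:bias}: the reproducing-kernel inequality gives $k(x,z)^2\le k(x,x)k(z,z)\le M^2$; the moment bound $\int\rho_*^4\,\mathcal{P}_X\le C$ gives $\mathbb{E}[\rho_*(X)^2]\le\sqrt C$; and $|f_*(x)|\le M\,\mathbb{E}|\rho_*(X)|\le M C^{1/4}$, so $f_*$ is uniformly bounded. Hence the first two terms are $O(n^{-2})$ and the third is $O(n^{-1})$, and multiplying through by $n$ gives $\mathbb{E}_{\bm{X}}\norm{f_*(\bm{X})-\tilde{f}_n(\bm{X})}^2=O(1)$. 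There is no real obstacle here — it is a routine second-moment estimate — but the point to keep track of is the bookkeeping: the self-term contributes only $O(n^{-2})$ per coordinate, while the genuine fluctuation $\tfrac1n\sum_{i\ge 2}W_i$, an average of $n$ i.i.d. terms of bounded variance, contributes $O(n^{-1})$ per coordinate, and it is precisely this $O(n^{-1})$, summed over the $n$ coordinates, that yields a bound of order $1$ rather than a vanishing one — consistent with the $\precsim 1$ in the statement.
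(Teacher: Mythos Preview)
Your proposal is correct and follows essentially the same approach as the paper: peel off the diagonal term $k(x_j,x_j)\rho_*(x_j)/n$, condition on $x_j$ so the remaining summands are i.i.d.\ with mean $f_*(x_j)$, and bound the resulting conditional variance using $k(x,z)^2\le M^2$ and the moment assumption on $\rho_*$. The paper groups the decomposition into two pieces rather than your three, but the substance is identical.
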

\begin{proof}[Proof of Proposition~\ref{prop:leave-one-out}]
	
	We claim that,
	\begin{align*}
		& | f_*(x_j) - \tilde{f}_n(x_j) |^2 \\
		&\leq  \frac{1}{n^2} \left|  f_*(x_j) -   k(x_j, x_j) \rho_*(x_j) \right|^2 + \frac{(n-1)^2}{n^2} \left| \frac{1}{n-1}\sum_{i\neq j} k(x_j, x_i) \rho_*(x_i) - f_*(x_j) \right|^2 
	\end{align*}
	We know that for any $x, z \in \mathcal{\bm{X}}$
$k(z, x) \leq \sqrt{k(z, z) k(x, x)} \leq '$ .
	Therefore we have 
	\begin{align*}
		\mathbb{E}_{\bm{X}} \frac{1}{n^2} \left| f_*(x_j) - k(x_j, x_j) \rho_*(x_j) \right|^2 \leq \frac{1}{n^2},
	\end{align*}
	since $\sup_{x \in \mathcal{X}}K(x, x) \leq C$, $\| \rho_* \|_{\mathcal{P}_X} \leq C$, and by Jensen's inequality
	\begin{align*}
		\int f_*(x)^2 \mathcal{P}_X(dx) \leq \int \int k(z, x)^2 \rho_\star(z)^2 \mathcal{P}_X (dz) \mathcal{P}_X(dx) \leq  C^2.
	\end{align*}
	For the leave-on-out term, 
	\begin{align*}
		&\mathbb{E}_{\bm{X}} \frac{(n-1)^2}{n^2}| \frac{1}{n-1}\sum_{i\neq j} k(x_j, x_i) \rho_*(x_i) - f_*(x_j) |^2 \\
		& \precsim \frac{(n-1)^2}{n^2} \mathbb{E}_{x_j}\left[  \mathbb{E}_{\bm{X} \backslash x_j} [ | \frac{1}{n-1}\sum_{i\neq j} k(x_j, x_i) \rho_*(x_i) - f_*(x_j) |^2 |x_j ] \right] \\
		& \precsim \frac{(n-1)^2}{n^2}\mathbb{E}_{x_j}\left[ \frac{1}{n-1} \int k^2(x_j, x) \rho_*^2(x) \mathcal{P}_X(dx)  \right] \precsim \frac{1}{n}.
	\end{align*}
	Therefore we have
	\begin{align*}
		\mathbb{E}_{\bm{X}} \| f_*(\bm{X}) - \tilde{f}_n(\bm{X}) \|^2 = \mathbb{E}_{\bm{X}} \sum_{j=1}^n | f_*(x_j) - \tilde{f}_n(x_j) |^2 \leq n\left( \frac{1}{n^2} + \frac{1}{n}\right) \precsim 1.
	\end{align*}	
\end{proof}

\begin{prop}[Variance]
	\label{prop:var}
	
	\begin{align}
		n \mathbb{E}_{\bm{X}} \| f_*(x) - \tilde{f}_n(x) \|_{\mathcal{P}_X}^2 \precsim 1
	\end{align}
\end{prop}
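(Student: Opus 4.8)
The plan is to recognize that for a fixed point $x$ (independent of the sample $\bm{X}$, since $x$ is merely the integration variable in the $L_2(\mathcal{P}_X)$ norm), the surrogate $\tilde{f}_n(x) = \frac{1}{n}\sum_{i=1}^n k(x,x_i)\rho_*(x_i)$ is an empirical average of i.i.d. random variables $\xi_i(x) := k(x,x_i)\rho_*(x_i)$ whose common mean is exactly $f_*(x) = \mathbb{E}_{z\sim\mathcal{P}_X}[k(x,z)\rho_*(z)]$ by the integral representation assumed in Theorem~\ref{thm:bias}. Hence $\mathbb{E}_{\bm{X}}[(f_*(x)-\tilde{f}_n(x))^2]$ is just a variance of a sample mean and carries the factor $1/n$.

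Concretely, I would proceed in three short steps. First, for fixed $x$,
\begin{align*}
\mathbb{E}_{\bm{X}}\big[(f_*(x)-\tilde{f}_n(x))^2\big] = \frac{1}{n}\,\mathrm{Var}_{z\sim\mathcal{P}_X}\!\big(k(x,z)\rho_*(z)\big) \le \frac{1}{n}\,\mathbb{E}_{z\sim\mathcal{P}_X}\!\big[k(x,z)^2\rho_*(z)^2\big].
\end{align*}
Second, integrate over $x\sim\mathcal{P}_X$ and apply Tonelli (the integrand is nonnegative), giving
\begin{align*}
\mathbb{E}_{\bm{X}}\|f_*-\tilde{f}_n\|_{\mathcal{P}_X}^2 \le \frac{1}{n}\int\!\!\int k(x,z)^2\rho_*(z)^2\,\mathcal{P}_X(dx)\,\mathcal{P}_X(dz).
\end{align*}
Third, bound the kernel by the reproducing-kernel Cauchy–Schwarz inequality $k(x,z)^2 \le k(x,x)\,k(z,z) \le M^2$ using $\sup_x k(x,x)\le M$, and then control $\int \rho_*^2\,\mathcal{P}_X(dx) \le \big(\int \rho_*^4\,\mathcal{P}_X(dx)\big)^{1/2} \le \sqrt{C}$ by Jensen's inequality with respect to the probability measure $\mathcal{P}_X$ together with the moment assumption $\int \rho_*^4\,\mathcal{P}_X(dx)\le C$. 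This yields $\mathbb{E}_{\bm{X}}\|f_*-\tilde{f}_n\|_{\mathcal{P}_X}^2 \le M^2\sqrt{C}/n$, and multiplying by $n$ gives the claim.

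There is essentially no substantive obstacle here: the statement is a one-line second-moment computation for an empirical mean. The only points requiring a modicum of care are (i) observing that the "fix $x$" reduction is legitimate because $x$ is a fresh draw independent of $\bm{X}$ rather than one of the samples $x_i$, and (ii) justifying the interchange of $\mathbb{E}_{\bm{X}}$ with the $\mathcal{P}_X$-integral over $x$, which is immediate from nonnegativity. The bounds on $k$ and on $\rho_*$ are exactly the hypotheses $\sup_x k(x,x)\le M$ and $\int\rho_*^4\le C$ already in force for Theorem~\ref{thm:bias}.
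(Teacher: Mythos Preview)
Your proposal is correct and follows essentially the same approach as the paper: recognize $\tilde f_n(x)$ as an i.i.d.\ sample mean with mean $f_*(x)$, bound the variance by a second moment, integrate in $x$, and conclude using $k(x,z)^2\le M^2$ together with $\|\rho_*\|_{L^2(\mathcal{P}_X)}\lesssim 1$. You are simply more explicit than the paper about Tonelli and about deriving the $L^2$ bound on $\rho_*$ from the $L^4$ assumption via Jensen.
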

\begin{proof}[Proof of Proposition~\ref{prop:var}]
	\begin{align}
		\mathbb{E}_{\bm{X}} \int \left( f_*(x) - \tilde{f}_n(x) \right)^2 \mathcal{P}_X (dx) \leq  \frac{1}{n} \int \int k^2(x, x') \rho^2_*(x') \mathcal{P}_X(dx')\mathcal{P}_X(dx) \leq C,
	\end{align}
	by boundedness of $ k^2(x, x') \leq C^2$ and $\| \rho_\star\|_{\mathcal{P}_X} \leq C$.
\end{proof}

\begin{prop}[Probability bound]
	\label{prop:in-prob}
	The following bounds hold simultaneously with probability at least $1-\delta$ on $\bm{X}$,
	\begin{align*}
		C_1(\bm{X})&:=  n\| f_*(x) - \tilde{f}_n(x) \|_{\mathcal{P}_X}^2  \precsim \frac{1}{\sqrt{\delta}}, \\
		C_2(\bm{X})&:= \| f_*(\bm{X}) - \tilde{f}_n(\bm{X}) \|^2 = \sum_{j=1}^n  \left( f_*(x_j) - \tilde{f}_n(x_j) \right)^2 \precsim \frac{1}{\sqrt{\delta}}
	\end{align*}
\end{prop}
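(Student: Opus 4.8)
The plan is to upgrade the first-moment estimates of Propositions~\ref{prop:leave-one-out} and~\ref{prop:var} to \emph{second}-moment estimates and then invoke Markov's inequality. Concretely, I would first establish that, under $\int_{\mathcal{X}}\rho_*^4\,\mathcal{P}_X(dx)\le C$ and $\sup_{x}k(x,x)\le M$,
\begin{align*}
	\mathbb{E}_{\bm{X}}\left[C_1(\bm{X})^2\right]\precsim 1 \qquad\text{and}\qquad \mathbb{E}_{\bm{X}}\left[C_2(\bm{X})^2\right]\precsim 1 .
\end{align*}
Granting these, Markov's inequality applied to $C_i(\bm{X})^2$ gives $\mathbb{P}\!\left(C_i(\bm{X})>t_i\right)=\mathbb{P}\!\left(C_i(\bm{X})^2>t_i^2\right)\le \mathbb{E}_{\bm{X}}[C_i^2]/t_i^2$; choosing $t_i=\sqrt{2\,\mathbb{E}_{\bm{X}}[C_i^2]/\delta}\precsim \delta^{-1/2}$ makes each bad event have probability at most $\delta/2$, and a union bound over $i\in\{1,2\}$ produces both bounds simultaneously with probability at least $1-\delta$ and the advertised $\delta^{-1/2}$ factor, with no dependence on $d$. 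If instead one only assumes $\int_{\mathcal{X}}\rho_*^2\,\mathcal{P}_X(dx)\le C$, only the first moments $\mathbb{E}_{\bm{X}}[C_i]\precsim 1$ survive and Markov applied directly to $C_i$ yields the weaker factor $\delta^{-1}$.

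For $\mathbb{E}_{\bm{X}}[C_1^2]$, set $g_i(x):=k(x,x_i)\rho_*(x_i)-f_*(x)$, so that for each fixed $x$ the variables $g_i(x)$ are i.i.d.\ and centered (because $\mathbb{E}_z[k(x,z)\rho_*(z)]=f_*(x)$) and $f_*(x)-\tilde f_n(x)=-\tfrac1n\sum_i g_i(x)$. Using $C_1^2=n^2\big(\int(f_*(x)-\tilde f_n(x))^2\,\mathcal{P}_X(dx)\big)^2$ and Fubini,
\begin{align*}
	\mathbb{E}_{\bm{X}}\!\left[C_1^2\right]=\frac{1}{n^2}\int\!\!\int \mathbb{E}_{\bm{X}}\Big[\big(\sum_i g_i(x)\big)^2\big(\sum_j g_j(x')\big)^2\Big]\,\mathcal{P}_X(dx)\,\mathcal{P}_X(dx') .
\end{align*}
I would then expand the inner expectation over the four-index sum $\sum_{i_1,i_2,j_1,j_2}\mathbb{E}_{\bm{X}}[g_{i_1}(x)g_{i_2}(x)g_{j_1}(x')g_{j_2}(x')]$; centering kills every pattern that is not fully matched, so only $i_1{=}i_2{\neq}j_1{=}j_2$ and $i_1{=}j_1{\neq}i_2{=}j_2$ (and relabelings) survive with $O(n^2)$ terms each, every such term bounded uniformly in $(x,x')$ via $M^2\int\rho_*^2$ and Cauchy--Schwarz, while the fully collapsed pattern $i_1{=}i_2{=}j_1{=}j_2$ has only $n$ terms, each bounded by $M^4\int\rho_*^4\le C$ --- this is the single place where the fourth-moment hypothesis on $\rho_*$ is needed (indeed, it is needed merely for finiteness). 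Thus the inner expectation is $O(n^2)$ uniformly in $(x,x')$, the prefactor $n^{-2}$ cancels it, and $\mathbb{E}_{\bm{X}}[C_1^2]\precsim 1$.

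For $\mathbb{E}_{\bm{X}}[C_2^2]=\sum_{j,j'}\mathbb{E}_{\bm{X}}[a_j a_{j'}]$ with $a_j:=|f_*(x_j)-\tilde f_n(x_j)|^2\ge 0$, I would reuse the leave-one-out splitting of Proposition~\ref{prop:leave-one-out}, writing $f_*(x_j)-\tilde f_n(x_j)=\tfrac1n\big(f_*(x_j)-k(x_j,x_j)\rho_*(x_j)\big)+\tfrac{n-1}{n}\big(\tfrac1{n-1}\sum_{i\ne j}k(x_j,x_i)\rho_*(x_i)-f_*(x_j)\big)$. The diagonal terms $\mathbb{E}_{\bm{X}}[a_j^2]=\mathbb{E}_{\bm{X}}|f_*(x_j)-\tilde f_n(x_j)|^4$ are $O(n^{-2})$ by the standard fourth-moment bound for a centered average of i.i.d.\ summands with bounded fourth moment (again using $\int\rho_*^4\le C$ and $\sup_x k(x,x)\le M$), hence contribute $O(n^{-1})$ in total; the off-diagonal terms $\mathbb{E}_{\bm{X}}[a_j a_{j'}]$ with $j\ne j'$ are handled by conditioning on $(x_j,x_{j'})$, noting that $f_*(x_j)-\tilde f_n(x_j)$ is, up to an $O(1/n)$ deterministic correction, a centered average over the shared samples $\{x_i : i\notin\{j,j'\}\}$, and running the same four-index bookkeeping as for $C_1$ to get $\mathbb{E}_{\bm{X}}[a_j a_{j'}]=O(n^{-2})$; summing over the $O(n^2)$ pairs gives $\mathbb{E}_{\bm{X}}[C_2^2]\precsim 1$. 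The main obstacle is precisely this moment computation --- in particular for $C_2$, where the self-interaction term $k(x_j,x_j)\rho_*(x_j)$, the overlap of the samples shared across different rows $x_j$ of $\bm{X}$, and the need for fourth- rather than second-moment control of the empirical average must all be handled simultaneously.
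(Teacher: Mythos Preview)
Your proposal is correct and follows the same second-moment-then-Markov/Chebyshev strategy as the paper. For $C_1$ the two arguments coincide: the paper's $\tilde h(x,x_i)$ is your $g_i(x)$, and the four-index expansion with the same matching patterns yields $\mathbb{E}_{\bm X}[C_1^2]\precsim 1$.

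For $C_2$ your plan works but is more elaborate than needed. Instead of conditioning on $(x_j,x_{j'})$ and redoing the four-index bookkeeping for the off-diagonal terms $\mathbb{E}_{\bm X}[a_j a_{j'}]$, the paper simply applies Cauchy--Schwarz
\[
\mathbb{E}_{\bm X}[a_j a_{j'}]\le \big(\mathbb{E}_{\bm X}[a_j^2]\big)^{1/2}\big(\mathbb{E}_{\bm X}[a_{j'}^2]\big)^{1/2}
\]
for \emph{all} pairs $(j,j')$, so only the quantity $\mathbb{E}_{\bm X}|f_*(x_j)-\tilde f_n(x_j)|^4$ needs to be bounded. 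That single fourth-moment bound, $O(n^{-2})$, is obtained by the same index-matching argument you describe for the diagonal case (distinguishing whether some summation index equals $j$), and then the double sum $\sum_{j,j'}$ gives $\mathbb{E}_{\bm X}[C_2^2]\precsim 1$. This shortcut collapses your off-diagonal step entirely into the diagonal one; the final bound and the role of the $\int\rho_*^4\le C$ hypothesis are identical.
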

\begin{proof}[Proof of Proposition~\ref{prop:in-prob}]
	We have shown that $\mathbb{E}_{\bm{X}} C_1(\bm{X}) \precsim 1/n$, and $\mathbb{E}_{\bm{X}} C_2(\bm{X}) \precsim 1$. Let's use the second moment method to show the in probability bounds for both terms. Define $\tilde{h}(x, x_i):= k(x, x_i) \rho_*(x_i) - f_*(x)$.
It is clear that $\mathbb{E}_{x_i \sim \mathcal{P}_X}[\tilde{h}(x, x_i)] = 0$ for any fixed $x$.
	\paragraph{Second moment calculations on $C_1(\bm{X})$.}
	\begin{align*}
		&\mathbb{E} \left[ C_1(\bm{X})^2 \right] = n^2 \mathbb{E} \left[ \frac{1}{n^2}\sum_{i, j} \int (k(x, x_i) \rho_*(x_i) - f_*(x) )(k(x, x_j) \rho_*(x_j) - f_*(x) ) \mathcal{P}_X(dx)  \right]^2 \\
		& = \frac{1}{n^2} \sum_{i,j,k,l} \mathbb{E}  \left[ \int \tilde{h}(x, x_i)\tilde{h}(x, x_j) \mathcal{P}_X(dx) \right] \left[\int \tilde{h}(x, x_k)\tilde{h}(x, x_l)  \mathcal{P}_X(dx) \right]
	\end{align*}
	Clearly the only nonzero terms on the RHS must be either (1) $(i,j) = (k, l)$, or (2) $(i, j) \neq (k, l)$ but $i=j$ and $k=l$. In case (1), we know
	\begin{align*}
		\sum_{i,j} \mathbb{E}  \left[ \int \tilde{h}(x, x_i)\tilde{h}(x, x_j) \mathcal{P}_X(dx)  \right]^2 \precsim n^2.
	\end{align*}
	In case (2), we know
	\begin{align*}
		\sum_{i \neq k} \mathbb{E}  \left[ \int \tilde{h}(x, x_i)^2 \mathcal{P}_X(dx) \right] \left[\int \tilde{h}(x, x_k)^2  \mathcal{P}_X(dx)  \right] \precsim n^2.
	\end{align*}
	All other terms, must have form $\mathbb{E}  \left[ \int \tilde{h}(x, x_i) \tilde{h}(x, x_j) \mathcal{P}_X(dx)  \right] \left[\int \tilde{h}(x, x_k)^2  \mathcal{P}_X(dx) \right] = 0$ for $i\neq j$, or $\mathbb{E} \left[ \int \tilde{h}(x, x_i)\tilde{h}(x, x_j) \mathcal{P}_X(dx)  \right] \left[\int \tilde{h}(x, x_k)\tilde{h}(x, x_l)  \mathcal{P}_X(dx)  \right] = 0$ for $i\neq j, k \neq l, (i,j)\neq (k,l)$. 
	Therefore, we have the second moment bound $\mathbb{E} \left[ C_1(\bm{X})^2 \right] \precsim 1$, by Chebyshev's inequality, we have the desired bound. 
	
	\paragraph{Second moment calculations on $C_2(\bm{X})$.}
	\begin{align*}
		&\mathbb{E} \left[ C_2(\bm{X})^2 \right] = \sum_{i,j} \mathbb{E} \left[ \left(f_*(x_i) - \tilde{f}_n(x_i)\right)^2 \left(f_*(x_j) - \tilde{f}_n(x_j)\right)^2 \right]\\
		& \leq \sum_{i,j} \left[ \mathbb{E}\left(f_*(x_i) - \tilde{f}_n(x_i)\right)^4 \right]^{1/2} \left[ \mathbb{E}\left(f_*(x_j) - \tilde{f}_n(x_j)\right)^4 \right]^{1/2}
	\end{align*}
	We know that
	\begin{align*}
		\mathbb{E}\left(f_*(x_i) - \tilde{f}_n(x_i)\right)^4 & = \frac{1}{n^4} \sum_{j_1, j_2, j_3, j_4} \mathbb{E}[\tilde{h}(x_i, x_{j_1})\tilde{h}(x_i, x_{j_2})\tilde{h}(x_i, x_{j_3})\tilde{h}(x_i, x_{j_4})].
	\end{align*}
	Divide into two cases: (1) some $j$ equals $i$, (2) all $j$'s do not equal $i$. In the first case, the only nonzero terms are, of the form
	$\tilde{h}(x_i, x_i)^2 \tilde{h}(x_i, x_{j})^2$ with $j \neq i$, or of the form $\tilde{h}(x_i, x_{i})\tilde{h}(x_i, x_{j})^3$ with $j \neq i$. In both cases there are at most $O(n)$ such terms. 
	
	In the second case, the only nonzero terms are
	$\tilde{h}(x_i, x_j)^4$ with $j\neq i$ (at most $O(n)$ such terms), and $\tilde{h}(x_i, x_j)^2\tilde{h}(x_i, x_k)^2$, with unique $k,j \neq i$ (at most $O(n^2)$). 
	
	Therefore, we get
	\begin{align*}
		\mathbb{E}\left(f_*(x_i) - \tilde{f}_n(x_i)\right)^4 \precsim \frac{n^2}{n^4} = \frac{1}{n^2},
	\end{align*}
	which implies that
	\begin{align*}
		\mathbb{E} \left[ C_2(\bm{X})^2 \right]  &\leq \sum_{i,j} \left[ \mathbb{E}\left(f_*(x_i) - \tilde{f}_n(x_i)\right)^4 \right]^{1/2} \left[ \mathbb{E}\left(f_*(x_j) - \tilde{f}_n(x_j)\right)^4 \right]^{1/2} \\
		 &\precsim n^2 \frac{1}{n^2} = 1.
	\end{align*}
	By the Chebyshev inequality, again we have the desired bound. 
	
\end{proof}

\subsection{Proofs in Section~\ref{sec:NN}}
\begin{proof}[Proof of Corollary~\ref{coro:ntk}]
Here we only give a sketch of the steps to avoid repetitions.
First, make approximation of the given kernel by a weighted sum of polynomial-type inner-product kernels. Note that 
\begin{equation*}
	\cos\angle (\tilde x, \tilde x') =\frac{x^\top x'}{\| x\|\| x'\|},
\end{equation*}
which is an inner product kernel divided by $\|\tilde x\|\|\tilde x'\|$.
Therefore, we write kernel $k$ as
\begin{equation*}
	k(x,x')/d= \sum\limits_{\iota=0}^\infty  \alpha_\iota \left\| \frac{1}{\sqrt{d}}  x \right\|^{1- \iota} \left\| \frac{1}{\sqrt{d}}  x'  \right\|^{1 - \iota} \left( \frac{x^\top x'}{d} \right)^{\iota}
\end{equation*}
For a constant $\iota_0$ large enough, we would have $k^{[\le \iota_0]}$ so close to $k$ so that
\begin{equation*}
	\left| \|k(\bm{X} , \bm{X} )^{-1}k(\bm{X} ,x)\|^2 -  \|k^{[\le \iota_0]}(\bm{X} , \bm{X} )^{-1}k^{[\le \iota_0]}(\bm{X} ,x)\|^2\right|
\end{equation*}
is of the order $ne^{-\Omega(\iota_0)}$. Then we need only to upper bound
\begin{equation*}
	\mathbb{E}_{x}\|k^{[\le \iota_0]}(\bm{X} , \bm{X} )^{-1}k^{[\le \iota_0]}(\bm{X} ,x)\|^2.
\end{equation*}
Define kernels $h_i$ as
\begin{equation*}
	h_{i}(x,x'):= \left( \frac{x^\top x'}{d} \right)^{i},
\end{equation*}
then we have
\begin{equation*}
	k(x,x')=\sum\limits_{i=0}^\infty  \alpha_{i} \left\| \frac{1}{\sqrt{d}}  x \right\|^{1- \iota} \left\| \frac{1}{\sqrt{d}}  x'  \right\|^{1 - \iota} h_{i}(x,x').
\end{equation*}

Define a diagonal matrix
	\begin{equation*}
		A:=\text{diag}(\| x_1 /\sqrt{d} \|,\cdots,\| x_n /\sqrt{d}\|).
	\end{equation*}
	Then we have
	\begin{equation*}
		k^{[\iota]}(\bm{X}, \bm{X} )=\alpha_\iota A^{1- \iota}h_\iota(\bm{X} )A^{1- \iota}
	\end{equation*}
	and
	\begin{equation*}
		k^{[\iota]}(\bm{X} ,x)=\alpha_i A^{1- \iota}h_ \iota(\bm{X} ,x)\| x/\sqrt{d}\|^{1- \iota}.
	\end{equation*}
	Now we have
	\begin{equation}
		k^{[\iota]}(\bm{X} )^{-1}k^{[\iota]}(\bm{X} ,x)=A^{\iota-1}h_ \iota(\bm{X} ,x)\|x /\sqrt{d}\|^{1- \iota}.
	\end{equation}
	Note that w.h.p., we have
	\begin{equation}
		A\asymp \mathbf{I}_n, \quad \| x/\sqrt{d}\| \asymp 1,
	\end{equation}
	which implies $\|x /\sqrt{d} \|^{\iota} \asymp 1$ for all $\iota \leq \iota_0 = o(\sqrt{d}/\log n)$ uniformly.

Therefore as in Proposition~\ref{prop:key}, we proceed with
\begin{equation*}
	\begin{split}
		&\mathbb{E}_{x}\|k^{[\le \iota_0]}(\bm{X} , \bm{X} )^{-1}k^{[\le \iota_0]}(\bm{X} ,x)\|^2\\
		\lesssim & \sum_{i=0}^{\iota}\mathbb{E}_{x}\|k^{[\le \iota_0]}(\bm{X} , \bm{X} )^{-1} k^{[i]}(\bm{X} ,x)\|^2 +  \mathbb{E}_{x}\|k^{[\le \iota_0]}(\bm{X} , \bm{X} )^{-1} \sum_{i=\iota+1}^{\iota_0} k^{[i]}(\bm{X} ,x)\|^2 \\
		\lesssim & \sum_{i=0}^{\iota}\mathbb{E}_{x}\|k^{[\le \iota_0]}(\bm{X} , \bm{X} )^{-1}k^{[i]}(\bm{X} ,x)\|^2 + \| k^{[\le \iota_0]}(\bm{X} , \bm{X} )^{-1} \|_{\rm op}^2  \cdot \mathbb{E}_{x}\| \sum_{i=\iota+1}^{\iota_0} k^{[i]}(\bm{X} ,x)\|^2\\
		\le&\sum_{i=0}^{\iota}\mathbb{E}_{x}\|k^{[i]}(\bm{X} , \bm{X} )^{+}\|_{\rm op}^2  \cdot \| k^{[i]}(\bm{X} ,x)\|^2 + O(1) \frac{n}{d^{\iota+1}} \\
		\lesssim&\sum_{i=0}^{\iota}\|h_{i}(\bm{X}, \bm{X} )^{+}\|_{\rm op}^2 \cdot \mathbb{E}_{x} \|h_{i}(\bm{X} ,x)\|^2 + \frac{n}{d^{\iota+1}}\\
		\lesssim& \sum_{i=0}^{\iota} (n d^{-i})^{-2} \frac{n}{d^i} + \frac{n}{d^{\iota+1}} \\
		\lesssim& \frac{d^{\iota}}{n} + \frac{n}{d^{\iota+1}}.
	\end{split}
\end{equation*}

Now adding back the truncated term, for $\iota_0 = o(\sqrt{d}/\log n)$, we have
\begin{equation}
	\mathbb{E}_{x\sim \mathcal{P}_X}\|k(\bm{X} , \bm{X} )^{-1}k(\bm{X} ,x)\|^2\lesssim  \frac{d^ \iota}{n} + \iota_0 \frac{n}{d^{\iota+1}}+ n e^{- \Omega(\iota_0)}\enspace.
\end{equation}
Take $\iota_0=C(\log d + \log n)\gg \iota$ with $C$ large enough, it is clear that $\iota_0 \ll \sqrt{d}/\log n$, we would have
\begin{equation*}
	\mathbb{E}_{x}\|k(\bm{X} , \bm{X} )^{-1}k(\bm{X} ,x)\|^2\lesssim  \frac{d^ \iota}{n} + \iota_0 \frac{n}{d^{\iota+1}}\enspace.
\end{equation*}

\end{proof}

\end{document}